\newcommand{\ZZ}{{\mathbb Z}}
\newcommand{\QQ}{{\mathbb Q}}
\newcommand{\FF}{\mathbb{F}}
\newcommand{\bG}{{\mathbf{G}}}
\newcommand{\bT}{{\mathbf{T}}}
\newcommand{\bN}{{\mathbf N}}
\newcommand{\bZ}{{\mathbf Z}}
\newcommand{\bC}{{\mathbf C}}
\newcommand{\AAA}{{\sf A}}
\newcommand{\SSS}{{\sf S}}
\newcommand{\Cl}{\operatorname{Cl}}
\newcommand{\GL}{\operatorname{GL}}
\newcommand{\SL}{\operatorname{SL}}
\newcommand{\PSL}{\operatorname{PSL}}
\newcommand{\GU}{\operatorname{GU}}
\newcommand{\SU}{\operatorname{SU}}
\newcommand{\PSU}{\operatorname{PSU}}
\newcommand{\SO}{\operatorname{SO}}
\newcommand{\Sp}{\operatorname{Sp}}
\newcommand{\PSp}{\operatorname{PSp}}
\newcommand{\Hom}{{\operatorname{Hom}}}
\renewcommand{\Im}{{\operatorname{Im}}}
\newcommand{\Ker}{\operatorname{Ker}}
\newcommand{\ord}{{\operatorname{ord}}}
\newcommand{\diag}{{\operatorname{diag}}}
\def\cent#1#2{{\bf C}_{#1}(#2)}
\def\syl#1#2{{\rm Syl}_#1(#2)}
\def\nor{\triangleleft\,}
\def\oh#1#2{{\bf O}_{#1}(#2)}
\def\norm#1#2{{\bf N}_{#1}(#2)}
\newcommand{\fv}{\check{f}}
\newcommand{\tw}[1]{{}^#1\!}
\let\eps=\epsilon
\theoremstyle{plain}
\newtheorem{thm}{Theorem}[section]
\newtheorem{lem}[thm]{Lemma}
\newtheorem{prop}[thm]{Proposition}
\newtheorem{cor}[thm]{Corollary}
\newtheorem*{thmA}{Theorem A}
\newtheorem*{thmB}{Theorem B}
\newtheorem*{thmC}{Theorem C}
\begin{document}

\marginparsep-0.5cm
\footnotesep6.5pt

\title[Nilpotent and Abelian Hall Subgroups]{Nilpotent and Abelian Hall Subgroups\\ in Finite Groups}
\author[Beltr\'an]{Antonio Beltr\'an}
\address{Departamento de Matem\'aticas, Universidad Jaume I, 12071 Castell\'on, Spain}
\email{abeltran@mat.uji.es}
\author[Felipe]{Mar\'ia Jos\'e Felipe}
\address{Instituto Universitario de Matem\'atica Pura y Aplicada, Universidad Polit\'ecnica de Valencia, 46022 Valencia, Spain}
\email{mfelipe@mat.upv.es}
\author[Malle]{Gunter Malle}
\address{FB Mathematik, TU Kaiserslautern, Postfach 3049,
  67653 Kaisers\-lautern, Germany}
\email{malle@mathematik.uni-kl.de}
\author[Moret\'o, Navarro, Sanus]{Alexander Moret\'o, Gabriel Navarro,
Lucia Sanus}
\address{Departament d'\`Algebra, Universitat de Val\`encia, 46100 Burjassot,
Val\`encia, Spain}
\email{alexander.moreto@uv.es, gabriel.navarro@uv.es, lucia.sanus@uv.es}
\author[Tiep]{Pham Huu Tiep}
\address{Department of Mathematics, University of Arizona, Tucson, AZ 85721,
USA}
\email{tiep@math.arizona.edu}

\thanks{
The research of Beltr\'an, Felipe, Moret\'o, Navarro, and Sanus is supported by
the Prometeo/Generalitat Valenciana, Proyectos MTM2010-15296,
MTM2010-19938-C03-02 Fundacio Bancaixa P11B2010-47 and Fondos Feder.
Malle gratefully
acknowledges financial support by ERC Advanced Grant 291512. Tiep gratefully
acknowledges the support of the NSF (grants DMS-0901241 and DMS-1201374).}

\subjclass[2010]{Primary 20D20; Secondary 20C15, 20D05, 20G40}

\begin{abstract}
We give a characterization of the finite groups having nilpotent or
abelian Hall $\pi$-subgroups which can easily be verified from the character
table.
\end{abstract}

\maketitle

%%%%%%%%%%%%%%%%%%%%%%%%%%%%%%%%%%%%%%%%%%%%%%%%%%%%%%%%%%%%%%%%%%%%%%%%%
\section{Introduction}

One of the main themes in Finite Group Theory is to study the interaction
between global and local structure: if $p$ is a prime and $G$ is a finite
group, we seek to analyze the relationship between $G$ and its $p$-local
subgroups. Ideally, a local property of $G$ can be read off from the character
table of $G$.

\medskip

There are not many theorems analyzing local and global structure from the point
of view of two different primes, as we do in the main result of this paper.

\begin{thmA}
 Let $G$ be a finite group, and let $p$ and $q$ be different primes.
 Then some Sylow $p$-subgroup of $G$ commutes with some Sylow $q$-subgroup of
 $G$ if and only if the class sizes of the $q$-elements of $G$ are not
 divisible by $p$ and the class sizes of the $p$-elements of $G$ are not
 divisible by $q$.
\end{thmA}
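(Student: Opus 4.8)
The forward implication is elementary: if $P\in\Syl_p(G)$ and $Q\in\Syl_q(G)$ satisfy $[P,Q]=1$ and $x$ is a $q$-element of $G$, then after replacing $x$ by a conjugate (which does not change $|x^G|$) we may assume $x\in Q$, so $P\le\cent{G}{x}$, the $p$-part of $|G|$ divides $|\cent{G}{x}|$, and $p\nmid|x^G|$; interchanging $p$ and $q$ gives $q\nmid|y^G|$ for every $p$-element $y$.

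For the converse I would take a counterexample $G$ of minimal order. The hypothesis says precisely that $\cent{G}{x}$ contains a full Sylow $p$-subgroup of $G$ for every $q$-element $x$, and $\cent{G}{y}$ contains a full Sylow $q$-subgroup for every $p$-element $y$; put $\pi=\{p,q\}$. Both properties pass to every quotient $G/N$ with $N\nor G$ a $p$-, $q$- or $\pi'$-group, because the relevant elements lift and class sizes can only shrink; and conversely, if $N$ is a $\pi'$-group and $G/N$ has commuting Sylow $p$- and $q$-subgroups, the preimage of their product is $\pi'$-by-nilpotent, and a Schur--Zassenhaus complement supplies commuting Sylow subgroups of $G$. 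So $\oh{\pi'}{G}=1$. The key observation for the local reductions is that, since $\oh{p}{G}$ lies in every Sylow $p$-subgroup of $G$, the first condition forces every $q$-element of $G$ to centralize $\oh{p}{G}$, hence every Sylow $q$-subgroup of $G$ centralizes $\oh{p}{G}$; symmetrically every Sylow $p$-subgroup centralizes $\oh{q}{G}$. Now if $M:=\oh{p}{G}\ne1$, then $G/M$ satisfies the hypothesis, so by minimality it has commuting Sylow subgroups, and lifting their product produces $P^{*}\rtimes Q_1\le G$ with $P^{*}\in\Syl_p(G)$, $Q_1\in\Syl_q(G)$ and $[P^{*},Q_1]\le M$; since $Q_1$ centralizes $M$ we get $[[P^{*},Q_1],Q_1]=1$, while coprime action gives $[[P^{*},Q_1],Q_1]=[P^{*},Q_1]$, so $[P^{*},Q_1]=1$, a contradiction. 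Hence $\oh{p}{G}=\oh{q}{G}=1$, so $F(G)=1$, $F^{*}(G)=E(G)$ is a direct product of non-abelian simple groups with $\cent{G}{F^{*}(G)}=1$; a routine argument then reduces us, via the wreath-product structure (both conditions, and the existence of commuting Sylow subgroups, being inherited by direct factors), to the case that $G$ is almost simple.

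The decisive step, and the main obstacle, is thus the almost simple case, which I would treat using the classification of finite simple groups. For socle alternating or sporadic this is a finite check from known character tables and subgroup data. The substantial work is when the socle $S$ is of Lie type, and it divides according as $p$ or $q$ is or is not the defining characteristic: one must decide, for each pair $(p,q)$, whether the two centralizer conditions can hold simultaneously, using the structure and orders of centralizers of semisimple and of unipotent elements in the ambient algebraic group together with the description of maximal tori, their normalizers, Levi and parabolic subgroups, and Sylow normalizers. When both conditions hold one exhibits commuting Sylow $p$- and $q$-subgroups explicitly -- typically inside the normalizer of a suitable maximal torus or a Levi subgroup -- and otherwise one produces a $q$-element whose centralizer omits a Sylow $p$-subgroup (or a $p$-element whose centralizer omits a Sylow $q$-subgroup). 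A final check that passing from $S$ to the almost simple overgroup $G$ by diagonal, field and graph automorphisms creates no new cases completes the induction.
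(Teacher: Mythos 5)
Your forward direction and your reduction down to $F(G)=1$ are sound; in particular the argument killing $\oh{p}{G}$ (lift commuting Sylow subgroups of $G/\oh{p}{G}$, observe $[P^{*},Q_1]\le \oh{p}{G}$, use that every $q$-element centralizes $\oh{p}{G}$ and then coprime action) is essentially the same commutator trick the paper uses. But from that point on there are two genuine gaps. First, the step ``$F^{*}(G)=E(G)$ is a product of non-abelian simple groups, hence a routine wreath-product argument reduces to $G$ almost simple'' is not routine. Your minimality argument only lets you lift commuting Sylow subgroups across normal $p$-, $q$- or $\pi'$-subgroups; a normal subgroup whose order is divisible by exactly one of the two primes (for instance the product of those components whose order is divisible by $q$ but not by $p$) is covered by none of your reductions, and this is exactly where the paper has to work: it needs the Glauberman-type Lemma~\ref{lema} and Corollary~\ref{cor} together with a Frattini/Schur--Zassenhaus argument to handle normal subgroups of order prime to $p$, and then Wielandt's conjugacy theorem for nilpotent Hall subgroups plus solvability of outer automorphism groups to force a minimal counterexample to be simple (not merely almost simple). ``Inherited by direct factors'' also does not dispose of the situation where $G$ permutes several components or embeds only subdirectly into $\prod_i{\rm Aut}(S_i)$; the paper's treatment of the permutation kernel $B$ uses the hypothesis in an essential way.

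Second, and decisively, the simple (in your setup, almost simple) case is the mathematical heart of the theorem, and your proposal only describes a verification program for it rather than carrying it out. In the paper this is Theorem~\ref{simple} together with Theorem~\ref{simple2} (the latter rules out $2\in\{p,q\}$), and their proofs occupy all of Section~2: real elements and \cite{TZ}, regular unipotent elements in the defining characteristic, explicit Singer-cycle and torus computations of class sizes for $\PSL_n$, $\PSU_n$ and the symplectic/orthogonal groups, and generic Sylow theory from \cite{BM} for the exceptional groups, with the stronger conclusion that the resulting Hall $\{p,q\}$-subgroup is abelian. Nothing in your sketch substitutes for that analysis, and by aiming at almost simple groups instead of simple ones you would in fact have to do strictly more (diagonal, field and graph automorphisms as well), work the paper's sharper reduction renders unnecessary. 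As it stands, then, your proposal is a plausible (and in places genuinely different) reduction attached to an unproved core claim.
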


Theorem A, of course, gives us a characterization (detectable in the character
table) of when a finite group possesses nilpotent Hall $\{p,q\}$-subgroups.
The existence of Hall subgroups (nilpotent or not) is a classical subject in
finite group theory, with extensive literature. This characterization in our
Theorem A can be seen as a contribution to Richard Brauer's Problem 11.
In his celebrated paper \cite{Br}, Brauer asks about obtaining information
about the existence of (certain) subgroups of a finite group given its
character table.

\medskip
In the course of proving Theorem~A we show that for finite simple groups,
commuting Sylow subgroups for different primes are actually always abelian,
see Theorem~\ref{simple}.

\medskip
Several possible extensions of Theorem A are simply not true. For instance,
the fact that $p$ does not divide $|G:\cent Gx|$ for every $q$-element
$x \in G$ does not guarantee that a Sylow $p$-subgroup of $G$ commutes with
some Sylow $q$-subgroup of $G$: the semi-affine groups of order $q^p(q^p-1)p$
provide solvable examples for every choice of $p$ and $q$.
Furthermore, if $p$ does not divide $|G:\cent Gx|$ for every $q$-element
$x \in G$, it is not even true that $P$ normalizes some Sylow $q$-subgroup of
$G$ ($G=M_{23}$ for $(q,p)=(3,5)$, or $G=J_4$ with $(q,p)=(3,7)$ are examples).
However, this assertion is true for $p$-solvable or $q$-solvable groups, or
if $p=2$, as we will show in Section 4 below.
\medskip

Recently, the finite groups with nilpotent Hall subgroups have also received
attention in \cite{M}. Using Theorem A and the results of \cite{M},
we can deduce the following.

\begin{thmB}
 Let $G$ be a finite group, and let $\pi$ be a set of primes.
 Then $G$ has nilpotent Hall $\pi$-subgroups if and only if
 for every pair of distinct primes $p, q\in \pi$, the class sizes of the
 $p$-elements of $G$ are not divisible by $q$.
\end{thmB}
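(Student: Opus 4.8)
The plan is to deduce Theorem~B from Theorem~A together with the main result of \cite{M}, which (for the purposes of this sketch) we take to assert that a finite group $G$ has nilpotent Hall $\pi$-subgroups if and only if $G$ has nilpotent Hall $\{p,q\}$-subgroups for every pair of distinct primes $p,q\in\pi$. A first, purely formal, observation is that the hypothesis of Theorem~B is automatically symmetric: requiring that for \emph{every} pair of distinct primes $p,q\in\pi$ the class sizes of the $p$-elements of $G$ be prime to $q$ is the same as requiring that for every such pair, the class sizes of the $p$-elements be prime to $q$ \emph{and} the class sizes of the $q$-elements be prime to $p$.

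Second, I would record the elementary translation between commuting Sylow subgroups and nilpotent Hall $\{p,q\}$-subgroups. If $P\in\Syl_p(G)$ and $Q\in\Syl_q(G)$ satisfy $PQ=QP$, then $PQ$ is a subgroup of $G$, and $P\cap Q=1$ gives $|PQ|=|P|\,|Q|$, so $PQ$ is a Hall $\{p,q\}$-subgroup of $G$; moreover it is nilpotent, being the direct product $P\times Q$. Conversely, a nilpotent Hall $\{p,q\}$-subgroup $H$ of $G$ is the direct product of its Sylow $p$- and Sylow $q$-subgroups; these are Sylow subgroups of $G$ (since $H$ is a Hall $\{p,q\}$-subgroup), and they commute. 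Hence, for a fixed pair $p,q\in\pi$, Theorem~A tells us that the class-size condition of Theorem~B restricted to $\{p,q\}$ holds if and only if $G$ has a nilpotent Hall $\{p,q\}$-subgroup.

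Combining these remarks finishes the proof: by the symmetry observation and Theorem~A, the hypothesis of Theorem~B holds if and only if $G$ has a nilpotent Hall $\{p,q\}$-subgroup for every pair of distinct primes $p,q\in\pi$, and by \cite{M} this last condition is equivalent to $G$ having nilpotent Hall $\pi$-subgroups. I expect the ``only if'' direction to be essentially free: if $H$ is a nilpotent Hall $\pi$-subgroup of $G$, then for any distinct $p,q\in\pi$ the Hall $\{p,q\}$-subgroup of $H$ is a nilpotent Hall $\{p,q\}$-subgroup of $G$, so Theorem~A applies directly. The real content, and the point where \cite{M} is indispensable, lies in the ``if'' direction: Theorem~A only guarantees, for each pair, the existence of \emph{some} commuting pair of Sylow subgroups, and there is no obvious way to choose these local objects compatibly so that they all sit inside a single nilpotent Hall $\pi$-subgroup. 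Bridging this gap, from pairwise data to a global Hall subgroup, is precisely what the results of \cite{M} supply.
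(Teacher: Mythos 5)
Your proposal is correct and takes essentially the same route as the paper: Theorem~A applied to each pair $\{p,q\}\subseteq\pi$ (using the symmetry of the class-size hypothesis), combined with the pairwise-to-global reduction of \cite{M} (Lemma~3.4 there), which is exactly the lemma the paper quotes. One small caution: ``commutes'' in Theorem~A means $[P,Q]=1$ rather than mere permutability $PQ=QP$, which is what actually makes $PQ=P\times Q$ nilpotent, but this does not affect the substance of your argument.
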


Theorem B gives an easy algorithm to determine if a group has nilpotent Hall
$\pi$-subgroups from the character table. (The somewhat weaker result that
the property of having nilpotent Hall subgroups is shared by groups with the
same character table was proven in \cite{KS}.)

\medskip

As a consequence of Theorem B and the main results of \cite{NT} and \cite{NST}
now we have the following explicit way to detect from the character table of
a finite group $G$ whether $G$ possesses an abelian Hall $\pi$-subgroup, for
any set $\pi$ of primes.

\begin{thmC}
 Let $G$ be a finite group, and let $\pi$ be a set of primes.
 Then $G$ has abelian Hall $\pi$-subgroups if and only if the two following
 conditions hold:
 \begin{enumerate}[\rm(i)]
  \item For every $p \in \pi$ and every $p$-element $x \in G$, $|G:\cent Gx|$
   is a $\pi'$-integer.
  \item For every $p \in \pi \cap \{3,5\}$ and for every irreducible character
   $\chi$ in the principal $p$-block of $G$, $\chi(1)$ is not divisible by $p$.
 \end{enumerate}
\end{thmC}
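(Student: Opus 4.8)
The plan is to combine Theorem~B with the character-theoretic description of abelian Sylow $p$-subgroups provided by \cite{NT} and \cite{NST}. The bridge between them is the elementary fact that a finite group has abelian Hall $\pi$-subgroups if and only if it has \emph{nilpotent} Hall $\pi$-subgroups and, for every $p\in\pi$, abelian Sylow $p$-subgroups. Indeed, a nilpotent Hall $\pi$-subgroup $H$ is the direct product $H=\prod_{p\in\pi}H_p$ of its Sylow subgroups, and each $H_p$ is a Sylow $p$-subgroup of $G$ because $H$ is Hall; hence $H$ is abelian exactly when every $H_p$ is abelian. Conversely an abelian Hall $\pi$-subgroup is nilpotent, and its Sylow $p$-subgroups are Sylow $p$-subgroups of $G$.

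Assume first that (i) and (ii) hold. Since a $\pi'$-integer is not divisible by any prime of $\pi$, condition~(i) implies in particular that for distinct primes $p,q\in\pi$ no class size of a $p$-element of $G$ is divisible by $q$; by Theorem~B, $G$ then has nilpotent Hall $\pi$-subgroups, say $H=\prod_{p\in\pi}H_p$ as above. Fix $p\in\pi$. What remains of (i) is the statement that $p\nmid|x^G|$ for every $p$-element $x\in G$. If $p\notin\{3,5\}$, this alone forces the Sylow $p$-subgroup $H_p$ to be abelian, by the criterion of \cite{NT} and \cite{NST}; and if $p\in\{3,5\}$, the same conclusion follows once condition~(ii) is adjoined, (ii) being precisely the extra hypothesis---every irreducible character in the principal $p$-block of $G$ has degree prime to $p$---that is needed in those two cases. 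Hence every $H_p$ is abelian, and $H$ is an abelian Hall $\pi$-subgroup of $G$.

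For the converse, suppose $G$ has an abelian Hall $\pi$-subgroup $H$, let $p\in\pi$, and let $x$ be a $p$-element of $G$. Then $x$ lies in some Sylow $p$-subgroup of $G$; since $H$ is a Hall $\pi$-subgroup it contains a Sylow $p$-subgroup of $G$, and all Sylow $p$-subgroups of $G$ are conjugate, so after replacing $x$ by a suitable conjugate we may assume $x\in H$. As $H$ is abelian, $H\le\cent Gx$, whence $|x^G|=|G:\cent Gx|$ divides $|G:H|$ and is therefore a $\pi'$-integer; this is condition~(i). Moreover every Sylow $p$-subgroup of $G$ is conjugate into $H$ and hence abelian, so the corresponding half of the criterion of \cite{NT} and \cite{NST}---abelian Sylow $p$-subgroups force every irreducible character in the principal $p$-block to have degree prime to $p$---yields the principal-block degree condition for every $p\in\pi$, in particular condition~(ii).

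The one genuinely substantial ingredient is not produced here but imported: the equivalence between having abelian Sylow $p$-subgroups and the class-size condition $p\nmid|x^G|$ on $p$-elements---augmented by the principal-block degree condition precisely when $p\in\{3,5\}$---rests on the classification of finite simple groups and is the content of \cite{NT} and \cite{NST}. Within the argument sketched above, the only step requiring care is noticing that condition~(i) plays two distinct roles simultaneously: it supplies the hypothesis of Theorem~B for pairs of primes in $\pi$ and, separately, the condition $p\nmid|x^G|$ for each individual $p\in\pi$. Beyond matching the imported statements to these two roles I expect no real obstacle.
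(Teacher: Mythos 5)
Your proposal is correct and follows exactly the route the paper takes: the paper proves Theorem C in one line as a consequence of Theorem B together with the main results of \cite{NT} and \cite{NST}, and your argument simply spells out that deduction (the elementary equivalence ``abelian Hall $\pi$-subgroup $\Leftrightarrow$ nilpotent Hall $\pi$-subgroup with abelian Sylow $p$-subgroups for all $p\in\pi$'', plus the imported character-theoretic criterion for abelian Sylow subgroups). No discrepancies to report.
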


As it might be expected, the proofs of Theorems A, B and C use the
Classification of Finite Simple Groups.

%%%%%%%%%%%%%%%%%%%%%%%%%%%%%%%%%%%%%%%%%%%%%%%%%%%%%%%%%%%%%%%%%%%%%%%%%
\section{Simple Groups and Theorem A}
The goal of this section is to prove Theorem \ref{simple}, which
yields a strong form of Theorem A for simple groups, and another
auxiliary result, Theorem \ref{simple2}.

\begin{thm}   \label{simple}
 Let $S$ be a finite non-abelian simple group and let $p$ and $q$ be distinct
 prime divisors of $|S|$.
 Suppose that all the $p$-elements of $S$ have $q'$-conjugacy class
 sizes and all the $q$-elements of $S$ have $p'$-conjugacy class sizes. Then
 $p, q > 2$ and $S$ has an abelian Hall $\{p,q\}$-subgroup.
\end{thm}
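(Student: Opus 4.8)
The proof is a case analysis running over the classification of finite simple groups, and the first step is to restate the hypothesis in a usable form. Writing $P\in\syl pS$ and $Q\in\syl qS$, the assumption that every $p$-element of $S$ has $q'$-class size says exactly that for each $p$-element $x$ the centralizer $\cent Sx$ contains a full Sylow $q$-subgroup of $S$, and symmetrically each $q$-element lies in a centralizer containing a full Sylow $p$-subgroup. Applying this to $p$-central and $q$-central elements and to elements of order $p$ inside a common abelian subgroup will, in each family, force both $|S|_p$ and $|S|_q$ to divide $|S|$ only ``generically'' — so that the Sylow subgroups are abelian — and in fact will force $p$ and $q$ to divide a common ``large'' factor of $|S|$, which is what ultimately produces the abelian Hall $\{p,q\}$-subgroup (typically a product of two cyclic groups inside one maximal-torus-like subgroup, whence the extra conclusion $p,q>2$). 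I would split the argument into: (1) $S$ sporadic or the Tits group; (2) $S=A_n$; (3) $S$ of Lie type.

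For (1) there are finitely many groups, each with finitely many pairs $\{p,q\}$ of prime divisors of $|S|$; reading the centralizer orders from the known character tables one checks directly that the hypothesis holds only when $p,q>2$, and in those cases a quick inspection of the subgroup structure exhibits an abelian Hall $\{p,q\}$-subgroup. For (2), assume $p<q$, so $q\le n$. Using the $p$-element $x$ that is a product of $\lfloor n/p\rfloor$ disjoint $p$-cycles, $\cent{S_n}x$ is essentially $C_p\wr S_{\lfloor n/p\rfloor}$ times a small symmetric factor, and computing $|S_n:\cent{S_n}x|$ shows $q$ divides it unless $n$ is small relative to $p$ and $q$; the symmetric computation with a $q$-element, combined, forces $n<p+q$ and $n<p^2$, leaving a short explicit list of triples. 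For those one verifies that $P$ and $Q$ are elementary abelian with disjoint supports, so their internal direct product is a Hall $\{p,q\}$-subgroup, and the explicit check for $2$-elements rules out $p=2$ or $q=2$.

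For (3), write $S=G^F$ over $\overline{\FF}_r$. If $r\in\{p,q\}$, say $r=p$, then an analysis of centralizers of unipotent elements (a nontrivial $p$-element $u$ has $|\cent Su|_{p'}$ governed by a reductive group of rank at most that of $G$, which cannot accommodate a full Sylow $q$-subgroup unless the rank is very small) reduces one to $\PSL_2(r^f)$, where $\cent Su=P$ already has index divisible by every prime dividing $r^{2f}-1$, so the hypothesis fails. Thus $p$ and $q$ are both non-defining. Here I would invoke the theory of Sylow $\ell$-subgroups of finite reductive groups: $Q$ lies in the normalizer of a Sylow $\Phi_{d_q}$-torus, and the class-size condition for $q$-elements of that torus forces $\Phi_{d_q}$ to occur in $|S|$ to the full power permitted by the rank and the relative Weyl group $W_G(\Phi_{d_q})$ to have order prime to $p$; by the classification of the $d$ for which $W_G(\Phi_d)$ has small order this pins $d_q$ to the regular (large) values, so $Q$ is abelian — and $d_2\in\{1,2\}$ is never of this type, so $q\ne2$, and symmetrically for $p$. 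Finally one checks that $d_p$ and $d_q$ can be realized inside a common $F$-stable maximal torus $\mathbf T$, so that $PQ\le \mathbf T^F$ is abelian of order $|S|_p|S|_q$, the desired Hall subgroup.

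The main obstacle is exactly the cross-characteristic Lie type case of step (3): one needs the full apparatus of $d$-Harish-Chandra theory, the classification of relative Weyl groups $W_G(\Phi_d)$ (equivalently, of when Sylow subgroups of finite reductive groups are abelian), and uniform control of centralizer orders of semisimple elements, in order to prove that the class-size hypotheses force $p$ and $q$ into ``general position'' — sharing one large cyclotomic factor and hence one maximal torus. Everything else reduces to a finite verification or an elementary combinatorial estimate.
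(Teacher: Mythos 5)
Your overall strategy --- a CFSG case division into sporadic, alternating, and Lie type groups, with the cross-characteristic Lie type case handled by forcing both primes onto a common cyclotomic factor of the generic order and hence into a common maximal torus --- is essentially the strategy of the paper, and you correctly identify the cross-characteristic step as the main obstacle. However, there is a concrete gap: your mechanism for proving $p,q>2$ does not work for groups of Lie type. For odd $q_0$ the prime $2$ divides \emph{both} $\Phi_1(q_0)=q_0-1$ and $\Phi_2(q_0)=q_0+1$, so $2$ never divides a unique cyclotomic factor of the generic order; the $d$-pinning argument (a semisimple element whose centralizer contains a Sylow subgroup must lie in a Sylow $d$-torus, whence $d_2=d_p$) is therefore unavailable for the prime $2$, and "$d_2\in\{1,2\}$ is never of this type" is not a conclusion you can reach by this route. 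The paper instead proves a separate and stronger statement (Theorem~\ref{simple2}): every non-abelian simple group of order divisible by an odd prime $r$ has a conjugacy class of $r$-elements of even size. Its proof uses a different idea entirely: a real $r$-element $g\neq 1$ has $|\bN_S(\langle g\rangle):\bC_S(g)|$ even and hence even class size, so $S$ would have to contain no nontrivial real $r$-elements; by Tiep--Zalesskii this confines $S$ to $\PSL_n$, $\PSU_n$, $P\Omega^{\pm}_{4n+2}$, $E_6$, $\tw2 E_6$, where explicit $r$-elements of even class size are then exhibited. Nothing in your sketch supplies a substitute for this step.

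Two further points. First, in the cross-characteristic case your chain of implications (class-size condition on $q$-elements $\Rightarrow$ $p\nmid|W_G(\Phi_{d_q})|$ $\Rightarrow$ $d_q$ regular $\Rightarrow$ $Q$ abelian) is not valid as stated: $p\nmid|W_G(\Phi_d)|$ does not make $d$ regular, and abelianness of $Q$ inside a torus requires $q\nmid|W_G(\Phi_{d_q})|$. The paper's actual route (Propositions~\ref{prop:bad}--\ref{prop:d1=d2} and Table~\ref{tab:exc}) first eliminates bad primes and primes dividing $|W|$ by exhibiting explicit semisimple elements with too-small centralizers, then applies the pinning argument only when the Sylow $d_1$-torus is maximal, and disposes of the finitely many non-maximal configurations by hand; for classical groups none of this generic machinery is used at all --- the paper works with explicit elements of $\GL_1(q^{\kappa})$-type subgroups and the parameters $\ord_r(\pm q)$. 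Second, your alternating-group endgame is internally inconsistent: if $n<p+q$ then $P$ and $Q$ cannot have disjoint supports of sizes $\ge p$ and $\ge q$. In fact the hypothesis never holds for a simple alternating group: for the larger of the two primes, a single cycle of that length already has class size divisible by the smaller prime, which is the one-line argument the paper uses.
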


Since, in contrast to Theorem~A, we claim existence of abelian Hall subgroups,
Theorem \ref{simple} obviously does not generalize to arbitrary finite groups.

\begin{thm}   \label{simple2}
 Let $S$ be a finite non-abelian simple group of order divisible by an odd
 prime $r$. Then $S$ contains a conjugacy class of $r$-elements of even size.
\end{thm}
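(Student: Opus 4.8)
The plan is to restate the claim in terms of centraliser orders and then verify it through the Classification. Fixing an odd prime $r$ dividing $|S|$, we must exhibit an $r$-element $x\in S$ with $|S:\cent Sx|$ even, equivalently with $|\cent Sx|_2<|S|_2$. For orientation, note the contrapositive: if \emph{every} $r$-element of $S$ had odd class size then each of them would centralise a full Sylow $2$-subgroup, so for a fixed $P\in\Syl_2(S)$ we would get $r\mid|\cent SP|$; since $Z(P)$ would then be a central Sylow $2$-subgroup of $\cent SP$, Burnside's normal $p$-complement theorem would force $\oh{2'}{\cent SP}\neq1$. Thus it would in fact suffice to show that $\cent SP$ is always a $2$-group, but in practice we simply produce a suitable $r$-element by hand. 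It helps to note at the outset that if $x$ is an $r$-element with $\cent Sx=\langle x\rangle$ then its class size $|S|/|\langle x\rangle|$ is automatically even because $2\mid|S|$, so that only ``large-centraliser'' $r$-elements ever need to be examined.

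For $S=\AAA_n$ with $n\geq5$ and $r\leq n$ an odd prime, take $x$ to be an $r$-cycle; this is an even permutation since $r$ is odd. If $n\geq r+2$ then $\cent{\SSS_n}x=\langle x\rangle\times\SSS_{n-r}$ contains an odd permutation, so $\cent{\AAA_n}x$ has index $2$ in it and the $\AAA_n$-class size of $x$ equals $(r-1)!\binom nr$, which is even because $r-1\geq2$. If $n\in\{r,r+1\}$ then $\cent{\AAA_n}x=\langle x\rangle$ has order $r$, so the class size is $|\AAA_n|/r$, and as $r\geq5$ here one checks directly that this is even. For the sporadic groups and the Tits group the statement is a finite verification from the \textsc{Atlas}: for each such $S$ and each odd prime $r\mid|S|$ there is a class of elements of $r$-power order of even size, most often one whose centraliser is a self-centralising cyclic $r$-group.

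Now let $S=\bG^F/Z$ with $\bG$ simple simply connected and $F$ a Frobenius endomorphism over $\FF_q$. If $q$ is even then $|\bT^F|$ is odd for every $F$-stable maximal torus $\bT$, so a regular semisimple $r$-element has odd-order centraliser and a class size divisible by $|S|_2$, hence even; so assume $q$ is odd. If $r=p$, take $u$ regular unipotent; then $|\cent{\bG^F}u|$ has $p'$-part equal to $|Z(\bG)^F|$, whose $2$-part is bounded in terms of the Lie type alone, whereas $|S|_2$ grows with $q$ through the factors $q^{d_i}-\eps_i$, so the corresponding unipotent class of $S$ is even outside a bounded list of small groups dealt with individually (for instance a transvection in $\PSL_2(q)$ has centraliser of odd order $q$, with class size $(q^2-1)/2$, which is even). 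If $r\neq p$, let $e$ be the multiplicative order of $q$ modulo $r$; then $r\mid|\bT_w^F|$ exactly for those $F$-stable maximal tori whose order is divisible by $\Phi_e(q)$. Choose such a torus $\bT$ carrying a regular semisimple element $s$ of order $r$; its centraliser in $S$ is, up to a bounded factor, the image of $\bT^F$, so the $2$-part of the class size of $s$ is essentially $|S|_2/|\bT^F|_2$, and we are done as soon as $\bT^F$ fails to contain a full Sylow $2$-subgroup of $\bG^F$.

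The main obstacle lies precisely in this last condition. The $F$-stable maximal tori whose fixed points contain a Sylow $2$-subgroup of $\bG^F$ form a known family --- indexed by the Weyl-group elements maximising the $2$-part of $|\bT_w^F|$ --- and the difficulty is that for certain small $q$ and certain small $r$ (in practice $r=3$) every maximal torus containing an element of order $r$ happens to be of this exceptional kind. In those residual cases one must replace the regular semisimple element by a non-regular semisimple element, a unipotent element, or a mixed element $su$, and check that one of the resulting classes is even. Carrying this through uniformly over all Lie types, ranks and isogeny types, treating separately the twisted groups $\tw2B_2(q)$, $\tw3D_4(q)$, $\tw2G_2(q)$ and $\tw2F_4(q)$ as well as the finitely many genuinely small groups, and keeping careful track of the passage between $\bG^F$ and the simple quotient $S$ and of the $2$-part of $Z(\bG)^F$, is where essentially all of the effort goes.
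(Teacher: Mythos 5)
Your reduction steps (alternating groups via an $r$-cycle, sporadic groups and the Tits group by finite verification, the defining-characteristic case via a regular unipotent element, and the case $q$ even via odd-order tori) are sound in outline and broadly parallel to the paper's Lemmas~\ref{alt} and~\ref{defi}. But for the main case --- cross characteristic with $q$ odd --- what you have written is a plan, not a proof. You propose to take a regular semisimple $r$-element $s$ in a torus $\bT$ with $\Phi_{d}(q)\mid|\bT^F|$ and compare $|\bT^F|_2$ with $|G|_2$, and then you explicitly concede that in the residual configurations one must instead use non-regular semisimple, unipotent or mixed elements and that this ``is where essentially all of the effort goes.'' That deferred case analysis \emph{is} the theorem; leaving it unexecuted is a genuine gap, not a technicality. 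Moreover, two of the steps you do assert are themselves unjustified: the existence of a \emph{regular} semisimple $r$-element in the chosen torus can fail (e.g.\ when $r\mid q-\eps$ and the rank is large compared with $(q-\eps)_r$, every $r$-element has repeated eigenvalues, its centralizer is a proper reductive subgroup of maximal rank rather than a torus, and its $2$-part can be the full $|G|_2$), and the claim that the centralizer in $S$ is ``up to a bounded factor the image of $\bT^F$'' depends on exactly that regularity.

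The paper avoids this torus-by-torus bookkeeping with one observation you do not use: if $1\neq g$ is a \emph{real} $r$-element with $r$ odd, then the inverting element gives $2\mid|\bN_S(\langle g\rangle):\bC_S(g)|$, so $|g^S|$ is automatically even. Hence if the theorem failed, $S$ would have no nontrivial real $r$-elements at all, and by the classification of reality in groups of Lie type \cite[Prop.~3.1]{TZ} this forces $S$ to lie in the short list $\PSL_n(q),\PSU_n(q)$ $(n\ge3)$, $P\Omega^\pm_{4n+2}(q)$, $E_6(q)$, $\tw2E_6(q)$, with strong restrictions on $r$ (e.g.\ $\ord_r(q)>n/2$ and odd in the linear case, since otherwise a subgroup $\Sp_{2k}(q)$ already supplies a real $r$-element). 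In those few cases one exhibits an explicit $r$-element whose class size is visibly even. If you want to salvage your approach, you would have to carry out your residual analysis uniformly over all types, ranks, isogeny types and congruences of $q$; alternatively, adopt the reality argument, which collapses the problem to a handful of families where the required element can be written down.
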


We begin with some obvious observations. If $x \in G$, then we denote
by $x^G$ the conjugacy class of $x$ in $G$.

\begin{lem}  \label{center}
 \begin{enumerate}[\rm(i)]
  \item For each simple group $S$ it suffices to prove Theorem~\ref{simple}
   and Theorem~\ref{simple2} for some quasi-simple group $L$ such that
   $S \cong L/\bZ(L)$.
  \item The case $\min(p,q) = 2$ of Theorem~\ref{simple} follows from
   Theorem \ref{simple2}.
 \end{enumerate}
\end{lem}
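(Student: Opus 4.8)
The whole lemma rests on one elementary observation comparing conjugacy classes in a group with those in a central quotient, and my plan is to isolate that observation first and then feed it into the two parts. Write $Z=\bZ(L)$ and let $\pi\colon L\to S=L/Z$ be the projection. For $y\in L$ with image $\bar y=\pi(y)$, the map $\pi$ restricts to an $L$-equivariant surjection $y^L\to\bar y^S$ between single orbits, so all of its fibres have a common size; since the fibre over $\bar y$ is $y^L\cap yZ$, this gives
\[
 |y^L| \;=\; |\bar y^S|\cdot|y^L\cap yZ| .
\]
Next I would show that the correction factor is the order of a subgroup of $Z$: the set $Z_0:=\{z\in Z : yz\in y^L\}$ contains $1$ and is closed under multiplication — if $yz_1=y^{g_1}$ and $yz_2=y^{g_2}$ then, $z_1,z_2$ being central, $(yz_1)^{g_2}=y^{g_2}z_1=yz_1z_2$, so $z_1z_2\in Z_0$ — hence (being finite) it is a subgroup, and $|y^L\cap yZ|=|Z_0|$. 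Finally, if $y$ is a $p$-element then so is every $z\in Z_0$, for otherwise $yz$, being a conjugate of $y$, would have nontrivial $p'$-part; thus $Z_0$ is a $p$-group and $|y^L\cap yZ|$ is a power of $p$.

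With this in hand, part~(i) is bookkeeping. Assume the relevant statement has been proved for some quasi-simple $L$ with $S\cong L/Z$. For Theorem~\ref{simple2}: if $r$ is an odd prime dividing $|S|$ (hence $|L|$), the $L$-version produces an $r$-element $x\in L$ with $|x^L|$ even; then $x\notin Z$, so $\bar x$ is a nontrivial $r$-element, and in $|x^L|=|\bar x^S|\cdot|Z_0|$ the factor $|Z_0|$ is a power of the odd prime $r$, so $|\bar x^S|$ is even, as required for $S$. For Theorem~\ref{simple}: the hypothesis for $S$ transfers to $L$, since for a $p$-element $y\in L$ the factor $|y^L\cap yZ|$ is a $p$-power, hence a $q'$-number (recall $p\ne q$), so $|y^L|=|\bar y^S|\cdot|Z_0|$ is a $q'$-number whenever $|\bar y^S|$ is, and symmetrically for $q$-elements. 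Applying the $L$-version then gives $p,q>2$ (inherited verbatim) and an abelian Hall $\{p,q\}$-subgroup $H$ of $L$. The Sylow $p$- and $q$-subgroups of $Z$ are central, hence contained in every Sylow $p$-, resp. $q$-subgroup of $L$, hence in $H$; so $H\cap Z$ is precisely the Hall $\{p,q\}$-subgroup of $Z$, and therefore $HZ/Z\cong H/(H\cap Z)$ is abelian of order $|S|_{\{p,q\}}$, i.e.\ an abelian Hall $\{p,q\}$-subgroup of $S$.

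For part~(ii), suppose $\min(p,q)=2$; since the hypothesis and conclusion of Theorem~\ref{simple} are symmetric in $p$ and $q$, I may assume $q=2$, so that $p$ is an odd prime dividing $|S|$. The hypothesis of Theorem~\ref{simple} then asserts in particular that every $p$-element of $S$ has $q'$-class size, i.e.\ odd class size; but Theorem~\ref{simple2}, applied with $r=p$, exhibits a conjugacy class of $p$-elements of even size. Hence the hypothesis of Theorem~\ref{simple} is never satisfied when $\min(p,q)=2$, and the implication holds vacuously in that case.

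The only point I expect to require genuine care is the claim that $|y^L\cap yZ|$ is a $p$-power for a $p$-element $y$ — equivalently, that $Z_0$ is a subgroup of $Z$ consisting of $p$-elements; everything else is routine, and in particular no appeal to the classification is needed for this lemma.
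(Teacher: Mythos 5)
Your proof is correct and takes essentially the same approach as the paper: your subgroup $Z_0=\{z\in Z: yz\in y^L\}$ is exactly the image of the paper's homomorphism $f\colon D\to Z$, $x\mapsto xgx^{-1}g^{-1}$ (where $D/Z=\bC_S(\bar y)$), so both arguments reduce to the fact that $|y^L|$ and $|\bar y^S|$ differ by a $p$-power, and part~(ii) is handled in the same way via Theorem~\ref{simple2}.
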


\begin{proof}
(i) Suppose that $S$ satisfies the hypothesis of Theorem \ref{simple}, and
that Theorem~\ref{simple} holds for some quasi-simple group $L$ with
$S = L/\bZ(L)$. Let $g \in L$ be any $p$-element and let $D/Z := \bC_S(gZ)$
for $Z := \bZ(L)$. Then for any $x \in D$ we have $xgx^{-1} = f(x)g$ for some
$f(x) \in Z$. Moreover, $f \in \Hom(D,Z)$, $\Ker(f) = \bC_L(g)$, and
$f(x)^{|g|} = 1$ for all $x \in G$, i.e. $\Im(f)$ is a $p$-group.
Thus $|D:C|$ is a $p$-power. It follows that $|(gZ)^S| = |L:D|$ and
$|g^L| = |L:C|$ have the same $q$-part and so $|g^L|$ is coprime to $q$.
Similarly, $|h^L|$ is coprime to $p$ for all $q$-elements $h \in L$. Since
Theorem~\ref{simple} holds for $L$, $L$ contains an abelian Hall
$\{p,q\}$-subgroup $P \times Q$, whence $(P \times Q)Z/Z$ is an abelian Hall
$\{p,q\}$-subgroup for $S$.

A similar argument proves the part of the claim concerning Theorem
\ref{simple2}.

\smallskip
(ii) Suppose $p = 2 < q$. Since $q||S|$, by Theorem \ref{simple2}
there is a conjugacy class $x^S$ of $q$-elements in $S$ of even size,
a contradiction.
%Hence $q \nmid |S|$ and so Theorem \ref{simple} holds in this case.
\end{proof}

\begin{lem}   \label{alt}
 Theorems~\ref{simple} and~\ref{simple2} hold in the case $S$ is an
 alternating group, a sporadic group, or $\tw2 F_4(2)'$.
\end{lem}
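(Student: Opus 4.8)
The plan is to handle the three families---alternating groups $\AAA_n$ (with their covers), the $26$ sporadic groups, and $\tw2F_4(2)'$---essentially by explicit case analysis, using the Atlas \cite{Atlas} together with known generic facts about $\AAA_n$. By Lemma~\ref{center}(i) we may replace $S$ by a quasi-simple cover $L$ when convenient, and by Lemma~\ref{center}(ii) we only need the $\min(p,q)>2$ case of Theorem~\ref{simple} once Theorem~\ref{simple2} is established.

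For Theorem~\ref{simple2}: if $S=\AAA_n$ and $r$ is an odd prime with $r\le n$, I would exhibit an $r$-element whose centralizer has even index. The natural candidate is an $r$-cycle $x$; its centralizer in $\SSS_n$ is $\langle x\rangle\times\SSS_{n-r}$, and one checks that $|{\AAA_n:\cent{\AAA_n}{x}}|$ is even (for instance because $n\ge r+2$ forces a transposition moving points outside the $r$-cycle, or for small $n-r$ one inspects directly---$r$-cycles for $r=n$ or $r=n-1$ still have even class size in $\AAA_n$ since $n\ge5$). One then pushes this down through the center of $2.\AAA_n$ as in Lemma~\ref{center}(i). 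For the sporadic groups and $\tw2F_4(2)'$, for each group and each odd prime $r$ dividing the order, I would simply point to a class of $r$-elements in the Atlas whose centralizer order is not divisible by the full $2$-part of $|S|$; such a class always exists---typically one can take an element of order $r$ in a cyclic maximal torus or a self-centralizing cyclic subgroup.

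For Theorem~\ref{simple}, assume now $p,q>2$. For $S=\AAA_n$: the hypothesis that every $p$-element has $q'$-class size is very restrictive. A $p$-cycle has centralizer $\langle x\rangle\times\SSS_{n-p}$ in $\SSS_n$, so $q\nmid|\AAA_n:\cent{}{x}|$ forces $q\nmid |\SSS_n|/(p\,|\SSS_{n-p}|)$, which (together with the symmetric condition on $q$-elements) pins $n$ down to a short list---morally $n<p+q$ and in fact $\max(p,q)\le n<p+q$ with no other prime interference---so that $P$ and $Q$ are cyclic of orders $p$ and $q$ generated by disjoint cycles, hence $P\times Q$ is an abelian Hall $\{p,q\}$-subgroup. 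Lifting to $2.\AAA_n$ is immediate since $p,q$ are odd. For the sporadic groups and $\tw2F_4(2)'$ I would go through the (short) list of pairs $\{p,q\}$ of odd primes both dividing $|S|$, and for all but finitely many pairs produce a $p$-element or $q$-element violating the coprimeness hypothesis by reading off centralizer orders from the Atlas; for the surviving pairs one checks in the Atlas that Sylow $p$- and $q$-subgroups are cyclic and that $S$ has a Hall $\{p,q\}$-subgroup of order $pq$ (often visible as a maximal subgroup or inside a known Hall subgroup), which is then automatically abelian.

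The main obstacle is not conceptual but organizational: Theorem~\ref{simple} asks for \emph{existence of an abelian Hall subgroup}, not merely that Sylows commute, so for each surviving case one must actually locate the Hall $\{p,q\}$-subgroup rather than just count class sizes---and the bookkeeping over all $26$ sporadic groups (and their multipliers, via Lemma~\ref{center}(i)) and all relevant prime pairs is the laborious part. The alternating-group case hinges on making the elementary divisibility estimate ``$n<p+q$'' rigorous, including the borderline values of $n-p$ and $n-q$ and the possibility that $p^2$ or $q^2$ divides $|\AAA_n|$, but this is routine once the right cycle-type elements are chosen.
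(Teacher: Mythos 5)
Your handling of Theorem~\ref{simple2} for $\AAA_n$ (an $r$-cycle, with the split classes for $n-r\le 1$ checked separately) and your plan of a finite Atlas/computer check for the sporadic groups and $\tw2F_4(2)'$ are sound and essentially what the paper does (it uses GAP and records that the only sporadic examples satisfying the hypothesis of Theorem~\ref{simple} are $J_1$ with $\{3,5\}$ and $J_4$ with $\{5,7\}$, both with cyclic Hall subgroups). The gap is in your alternating-group case of Theorem~\ref{simple}. You claim the class-size hypothesis pins $n$ into a range $\max(p,q)\le n<p+q$ and that in this range an abelian Hall $\{p,q\}$-subgroup exists, ``generated by disjoint cycles''. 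That endgame cannot work: a $p$-cycle and a $q$-cycle are disjoint (hence commute) only if $n\ge p+q$, contradicting $n<p+q$; and in that range a Hall $\{p,q\}$-subgroup need not exist at all (for instance $\AAA_5$ has no subgroup of order $15$), so you cannot conclude existence of an abelian Hall subgroup for your ``surviving'' cases in this way.

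What you missed is that the divisibility analysis, applied to the \emph{larger} prime, already eliminates every case, so there are no surviving cases and no construction is needed. Say $p>q$ and let $g\in\AAA_n$ be a $p$-cycle, so $|g^{\SSS_n}|=\binom{n}{p}(p-1)!$. If $p\ge5$ then $(p-1)!$ is divisible by $2q$ (since $q\le p-2$), so $q$ divides $|g^{\AAA_n}|$ even if the $\SSS_n$-class splits in $\AAA_n$; if $p=3$ (so $q=2$) then $n\ge5$, the centralizer $C_3\times\SSS_{n-3}$ contains a transposition, the class does not split, and $|g^{\AAA_n}|=2\binom{n}{3}$ is even. Hence no alternating group satisfies the hypothesis of Theorem~\ref{simple}, and the statement is vacuous there; this one-line computation (which simultaneously gives Theorem~\ref{simple2} by taking $q=2$) is exactly the paper's proof, and it also makes your appeal to the double cover $2.\AAA_n$ unnecessary, since one argues directly in $S=\AAA_n$.
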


\begin{proof}
The case of $26$ sporadic groups and $\tw2 F_4(2)'$ can be checked directly
using \cite{GAP}. (We remark that the only examples among the sporadic groups
are $J_1$ for $\{p,q\} =\{3,5\}$ and $J_4$ for $\{p,q\} = \{5,7\}$, and in both
cases $S$ has cyclic Hall $\{p,q\}$-subgroups.) Suppose that $S = \AAA_n$ and
$n \geq p > q \geq 2$. Then $\bC_{\SSS_n}(g) \cong C_p \times \SSS_{n-p}$ for
a $p$-cycle $g \in S$. If $p \geq 5$, we have that
$|g^{\SSS_n}| = \binom{n}{p} (p-1)!$ is divisible by $2q$ and so
$q$ divides $|g^S|$. If $p = 3$, then $n \geq p+2$ and so
$|g^S| = |g^{\SSS_n}|$ is again divisible by $q$.
\end{proof}

The rest of the section is devoted to proving Theorems \ref{simple}
and \ref{simple2} for simple groups of Lie type $S \not\cong \tw2 F_4(2)'$.
For the sake of convenience, we rename $p,q$ in Theorem \ref{simple} to $r,s$.
We will consider the following setup: $S = G/\bZ(G)$, where $\bG$ is a simple
simply connected algebraic group over the algebraic closure of a finite field
of characteristic~$p$ and $F:\bG\rightarrow\bG$ is a Steinberg endomorphism
with group of fixed points $G:=\bG^F$. We let $q$ denote the absolute value
of all eigenvalues of $F$ on the character group of an $F$-stable maximal
torus of $\bG$.

\begin{lem}   \label{defi}
 Theorems~\ref{simple} and~\ref{simple2} hold in the case $S$ is a simple
 group of Lie type in characteristic $r=p$.
\end{lem}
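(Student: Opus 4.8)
\emph{Proof idea.} By Lemma~\ref{center}(i) (and the analogous remark for Theorem~\ref{simple2}) it suffices to prove both statements for the quasi-simple group $L:=G=\bG^F$, for which $S\cong L/\bZ(L)$; the finitely many groups over very small fields for which no such $G$ is quasi-simple are either alternating, hence covered by Lemma~\ref{alt}, or can be checked directly. So I assume from now on that $G=\bG^F$ is quasi-simple, I write $Z:=\bZ(G)=\bZ(\bG)^F$, I let $\Phi$ denote the root system of $\bG$, and I let $U$ be a Sylow $r$-subgroup of $G$, that is, a maximal unipotent subgroup.

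For Theorem~\ref{simple} the plan is to show that in defining characteristic $r=p$ its hypothesis can never be satisfied, so that the statement is vacuously true. Since $s$ is a prime divisor of $|S|$ different from $r$, the full preimage $H$ in $G$ of a subgroup of order $s$ of $S$ is abelian of order $s\,|Z|$ --- it is a central extension of a cyclic group by the abelian group $Z$ --- and so a Sylow $s$-subgroup of $H$ contains an element $y\notin Z$. As $y$ has $r'$-order it is semisimple, and $y\notin\bZ(\bG)$ because $y\in\bG^F$. Since $\bG$ is simply connected, $\bC_{\bG}(y)$ is connected reductive by Steinberg's theorem, and its root system $\{\alpha\in\Phi:\alpha(y)=1\}$ (with respect to a maximal torus containing $y$) is a \emph{proper} subsystem of $\Phi$, as $y\notin\bZ(\bG)$. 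Hence $\bC_{\bG}(y)$ has strictly fewer positive roots than $\bG$, so a maximal unipotent subgroup of $\bC_G(y)=\bC_{\bG}(y)^F$, which is a Sylow $r$-subgroup of $\bC_G(y)$, is strictly smaller than $U$. Therefore $r$ divides $|y^G|$, contradicting the assumption that every $s$-element of $G$ has $r'$-conjugacy class size. This proves Theorem~\ref{simple} in the present case.

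For Theorem~\ref{simple2} we are given that $r=p$ is odd, and I would exhibit an even-sized class of $p$-elements by taking $u\in G$ to be a regular unipotent element. Since $\bG$ is simple and simply connected, $\bZ(\bG)^{\circ}=1$, so $\bC_{\bG}(u)^{\circ}$ is unipotent of dimension equal to the rank of $\bG$, whence $\bC_{\bG}(u)^{\circ F}$ is a $p$-group of odd order; moreover $\bC_{\bG}(u)^F$ maps onto the $F$-fixed points of the component group $A:=\bC_{\bG}(u)/\bC_{\bG}(u)^{\circ}$ (Lang's theorem), which is small --- it is $\bZ(\bG)$ in good characteristic and remains small in the few bad cases, occurring only for $p\in\{3,5\}$ and exceptional types. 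Consequently $|\bC_G(u)|_2=|A^F|_2$, which is much smaller than $|G|_2$: indeed $(q^2-1)_2\ge 8$ always divides $|G|_2$, and a uniform estimate of $|G|_2$ against $|A^F|_2$ then gives $|\bC_G(u)|_2<|G|_2$ for every $S$ in question, the handful of small groups where this is tight being inspected directly. Therefore $|u^G|$ is even, which is what Theorem~\ref{simple2} asserts.

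The step demanding the most care is this final inequality $|G|_2>|\bC_G(u)|_2$ for a regular unipotent $u$: it needs a uniform grip on the structure of the regular unipotent centralizer, including the bad-characteristic exceptions among the exceptional groups, together with the separate inspection of the few small groups (essentially $\PSL_2$ over small fields, some of which coincide with alternating groups) where the generic cyclotomic bound is too weak. By contrast, the reduction of Theorem~\ref{simple} to the non-existence of examples is essentially forced once one invokes Steinberg's connectedness theorem for centralizers of semisimple elements in simply connected groups.
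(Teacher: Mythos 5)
Your argument is correct in substance, but it takes a genuinely different route from the paper, which handles both statements at once with a single trick: take a regular unipotent element $g\in G=\bG^F$ (it exists by \cite[Prop.~5.1.7]{C}); if some prime $s\neq p$ did not divide $|g^G|$, then $\bC_G(g)$ would contain a Sylow $s$-subgroup $Q$, but every semisimple element of $\bC_\bG(g)$ lies in $\bZ(\bG)$ by \cite[Prop.~5.1.5]{C}, so $Q\le\bZ(G)$ and $s\nmid|S|$ --- a contradiction both for Theorem~\ref{simple} (with $s$ the second prime) and for Theorem~\ref{simple2} (with $s=2$). Your treatment of Theorem~\ref{simple} replaces this by the dual half of the hypothesis: a noncentral semisimple $s$-element $y$ has $\bC_\bG(y)$ connected reductive (Steinberg) with a proper subsystem of roots, hence strictly smaller $p$-part of the centralizer order, so $r=p$ divides $|y^G|$; this is a valid and clean alternative. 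The soft spot is your Theorem~\ref{simple2} argument: instead of the centrality of semisimple elements in $\bC_\bG(u)$, you estimate $|\bC_G(u)|_2=|A^F|_2$ via the component group $A$ of the regular unipotent centralizer and then claim a uniform inequality $|A^F|_2<|G|_2$. That route can be completed, but as written it rests on the unproved assertion that $A$ ``remains small'' in bad characteristic $p\in\{3,5\}$ for exceptional types and on a case-by-case numerical bound (note also that for type $A_{n-1}$ the $2$-part of $A\cong\bZ(\bG)$ is not absolutely bounded, so the estimate really must be made type by type against $|G|_2$); the paper's appeal to \cite[Prop.~5.1.5]{C} renders all of this unnecessary, since it shows directly that an odd-sized regular unipotent class would force a central Sylow $2$-subgroup. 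In short: your simple-theorem half is a correct different proof; your simple2 half is the same regular-unipotent idea executed with avoidable extra machinery whose key quantitative step is left unverified.
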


\begin{proof}
By Lemma~\ref{alt} we may assume that $(S,2) \neq (\tw2 F_4(2)',2)$. By
Lemma~\ref{center}(i) we may replace $S$ by $G = \bG^F$. Set
$s:=2$ in the case of Theorem \ref{simple2}.
By \cite[Prop.~5.1.7]{C}, $G$ contains a regular unipotent
$p$-element $g \in G$. Since $s \nmid |g^G|$, $\bC_G(g)$ contains a Sylow
$s$-subgroup $Q$ of $G$. But every semisimple element in $\bC_\bG(g)$ belongs
to $\bZ(\bG)$ by \cite[Prop.~5.1.5]{C}, whence $Q \leq \bZ(G)$. It follows
that $s \nmid |S|$ and so we are done.
\end{proof}

\begin{proof}[Proof of Theorem \ref{simple2}]
Assume the contrary: $|S:\cent Sx|$ is odd for all $r$-elements $x \in S$.
By Lemmas~\ref{alt} and~\ref{defi} we see that $S$ is a simple group of Lie
type in characteristic $p$ for some prime $p \neq r$ and
$(S,p) \neq (\tw2 F_4(2)',2)$.

Note that if $1 \neq g \in S$ is a {\bf real} $r$-element, then $|\bN_S(\langle
g \rangle):\bC_S(g)|$ is even and so $|g^S|$ is even. Thus $S$ cannot contain
any real $r$-element $g \neq 1$. By \cite[Prop.~3.1]{TZ}, it follows that
$$S \in \{ \PSL_n(q), \PSU_n(q) \mid n \geq 3\} \cup \{P\Omega^{\pm}_{4n+2}(q)
     \mid n \geq 2\} \cup \{E_6(q), \tw2 E_6(q)\}.$$
By Lemma \ref{center}(i) we may replace $S$ by $G = \bG^F$, or by some
quotient $L = G/Z$ with $Z \leq \bZ(G)$.

\smallskip
Suppose first that $S=\SL_n(q)$ with $n\geq3$, and set $k:=\ord_r(q)\leq n$.
If $k \leq n/2$, then $\SL_n(q) \geq \Sp_{2k}(q)$ contains a nontrivial real
$r$-element by \cite[Prop.~3.1]{TZ}, a contradiction. Similarly, if $2|k$,
then again $\SL_n(q) \geq \Sp_k(q)$ contains a nontrivial real $r$-element.
Thus $k > n/2$ and $k$ is odd.
%; in particular, $k \geq 3$ and so $r\nmid(q-1)$.
Now it is easy to see that $H := \SL_k(q)$ contains an $r$-element $g$ with
$\bC_H(g) \cong C_{(q^k-1)/(q-1)}$. Embedding $H$ naturally in $\SL_n(q)$, we
get that
$$|g^G| = \frac{|\GL_n(q)}{(q^k-1) \cdot |\GL_{n-k}(q)|}
        = q^{\binom{n}{2}-\binom{n-k}{2}} \cdot \frac{\prod^n_{j=n-k+1}(q^j-1)}{q^k-1}$$
is even.

The same argument as above applies to the case $G = \SU_n(q)$ if we replace
$q$ by $-q$.

\smallskip
Suppose now that $S = P\Omega^\eps_{4n+2}(q)$ with $n \geq 2$.
Then we replace $S$ by $L = \Omega^\eps_{4n+2}(q)$. If
$r|\prod^{2n}_{j=1}(q^{2j}-1)$, then $\Omega^\eps_{4n+2}(q) > \Omega_{4n+1}(q)$
contains a nontrivial real $r$-element by \cite[Prop.~3.1]{TZ}, a
contradiction. So $r \nmid \prod^{2n}_{j=1}(q^{2j}-1)$ but $r|(q^{2n+1}-\eps)$.
Now it is easy to see that $H:= \SO^\eps_{4n+2}(q)$ contains an $r$-element
$g \in \Omega^\eps_{4n+2}(q)$ with $\bC_H(g) \cong C_{q^{2n+1}-\eps}$.
It follows that $|g^L|$ is even.

\smallskip
Finally, let $G := E^\eps_{6}(q)_{sc}$ with $\eps = +$
for $E_6(q)$ and $\eps = -$ for $\tw2 E_6(q)$. If $r$ divides $|F_4(q)|$,
then $G > F_{4}(q)$ contains
a nontrivial real $r$-element by \cite[Prop.~3.1]{TZ}, a contradiction.
So $r \nmid |F_4(q)|$ but $r|(q^5-\eps)(q^9-\eps)$. In particular, $r$ is a
Zsigmondy prime divisor for $q^5-\eps$ or $q^9-\eps$. Inspecting the
centralizers of semisimple elements in $G$ of order divisible by $r$,
%respectively $q^9-\eps$,
as described in \cite{D},
one sees that there exist $r$-elements $h$ with $|C_G(h)|$ even.
\end{proof}

We will now prove the following result, which,
together with Lemmas \ref{alt}, \ref{defi} and Theorem \ref{simple2},
implies Theorem \ref{simple}.

\begin{thm}   \label{simple1}
 Let $S$ be a finite non-abelian simple group of Lie type defined over $\FF_q$,
 $q$ a power of a prime $p$, and let $r$ and $s$ be distinct odd prime
 divisors of $|S|$ different from $p$. Suppose that all the $r$-elements of
 $S$ have $s'$-conjugacy class size and all the $s$-elements of $S$ have
 $r'$-conjugacy class size. Then $S$ has an abelian Hall $\{r,s\}$-subgroup.
\end{thm}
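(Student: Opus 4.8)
The plan is to reduce to the group $G=\bG^F$ via Lemma~\ref{center}(i), and then to argue along the lines of the Ennola-type analysis of Sylow subgroups in finite groups of Lie type. Since $r,s$ are distinct from the defining characteristic $p$, the Sylow $r$- and $s$-subgroups of $G$ are controlled by the orders $d:=\ord_r(q)$ and $e:=\ord_s(q)$ and the corresponding cyclotomic polynomial $\Phi_d$, $\Phi_e$ dividing the generic order polynomial $|\bG^F|$ in $q$; the relevant Sylow subgroups are ``contained'' in the normalizers of Sylow $\Phi_d$- and $\Phi_e$-tori, i.e.\ in $d$- and $e$-split Levi subgroups, by the theory of Gr\"unberg--Kegel/Springer--Steinberg tori (cf.\ \cite{C} and the $d$-Harish-Chandra theory of Brou\'e--Malle--Michel). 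The hypothesis that every $r$-element has $s'$-class size forces a Sylow $s$-subgroup $Q$ of $G$ to centralize some Sylow $r$-subgroup $R$, and vice versa; translated to tori this says that a Sylow $\Phi_e$-torus centralizes a Sylow $\Phi_d$-torus. The aim is to show first that $R$ and $Q$ are themselves abelian (equivalently $\Phi_d^2\nmid|\bG^F|$ and $\Phi_e^2\nmid|\bG^F|$, so $R,Q$ are homocyclic pieces of cyclic maximal tori), and then that $R\times Q$ is again a maximal torus piece, hence abelian, and of the right order to be a Hall $\{r,s\}$-subgroup.

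The key steps, in order, are: (1) Reduce to $G=\bG^F$ and fix $d=\ord_r(q)$, $e=\ord_s(q)$; note $d\neq e$ is possible but the argument must also cover $d=e$. (2) Show, using that some Sylow $s$-subgroup centralizes some Sylow $r$-subgroup together with the structure of centralizers of semisimple elements, that $d\neq e$ forces the ambient root system to be small — in the equal case $d=e$ one lands inside a single $d$-split Levi whose structure is classical (a product of $\GL$'s or unitary groups twisted by $\Phi_d$) where one can read off abelianness directly. (3) Run through the list of simple types $A_n,\tw2A_n,B_n,C_n,D_n,\tw2D_n,\tw3D_4,G_2,F_4,E_6,\tw2E_6,E_7,E_8$ (and the twisted Suzuki/Ree families), in each case using the generic order polynomial to check whether $\Phi_d^2$ or $\Phi_e^2$ can divide $|G|$ while still satisfying the commuting-Sylow hypothesis: the point is that if $\Phi_d^2\mid|G|$ then a Sylow $\Phi_d$-torus is $2$-dimensional and its Weyl group typically contains an element acting non-trivially, producing $r$-elements with class size divisible by $s$ (via a non-toral $s$-element in the relative Weyl group $N_G(T)/T$), contradicting the hypothesis. (4) Once $R$ and $Q$ are abelian and toral, and lie in a common centralizer $\bC_\bG(y)^F$ for a suitable semisimple $y$, conclude that a Hall $\{r,s\}$-subgroup is $R\times Q$ sitting inside a maximal torus, hence abelian; one must separately verify the Hall condition, i.e.\ that $|R\times Q|=|G|_{\{r,s\}}$, which amounts to checking that $\Phi_d$ and $\Phi_e$ occur to the full power of the $r$- resp.\ $s$-part in the cyclotomic factorisation and that no ``extra'' contribution to $|G|_r$ or $|G|_s$ comes from Zsigmondy-type coincidences.

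The main obstacle is step (3): it is a somewhat delicate case-by-case examination of the exceptional groups and of the classical groups of small rank, where $\Phi_d^2\mid|G|$ genuinely can happen (e.g.\ $\Phi_1^2,\Phi_2^2$ in $\SL_n$, various $\Phi_d^2$ in $E_7,E_8$) and one must rule these out by exhibiting an explicit $r$-element of $s$-divisible class size, using the action of the relative Weyl group on the Sylow $\Phi_d$-torus. A secondary subtlety is the interaction $d\neq e$: one needs that commuting Sylow $\Phi_d$- and $\Phi_e$-tori can only coexist inside a Levi small enough to be abelian on the $\{r,s\}$-part, which again reduces to inspecting maximal tori and their orders. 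I expect the exceptional-group bookkeeping, supported by tables of semisimple-element centralizers as in \cite{D}, to consume most of the proof, while the classical groups succumb to a uniform $\GL_n/\GU_n$-style computation with cyclotomic polynomials as already previewed in the proof of Theorem~\ref{simple2}.
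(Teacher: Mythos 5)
There are two genuine gaps in your plan, and they are structural rather than cosmetic. First, the step ``the hypothesis that every $r$-element has $s'$-class size forces a Sylow $s$-subgroup $Q$ to centralize some Sylow $r$-subgroup $R$, and vice versa'' is not justified: the hypothesis only says that each \emph{individual} $r$-element is centralized by some Sylow $s$-subgroup. Passing from elementwise centralization to a commuting pair of full Sylow subgroups is essentially the content of Theorem A itself (whose proof for simple groups rests on the very statement you are proving), so taking it as a starting point is circular. The paper never makes this reduction; instead it chooses explicit $r$- and $s$-elements (Singer-type elements, diagonal elements such as $\diag(\beta,\beta^{-1},1,\ldots,1)$, regular elements of subgroups $\GL_k$, $\GU_k$, $\Cl^{\pm}_{2\kappa}$) and computes their class sizes directly to extract arithmetic constraints, namely $\ord_r(q)=\ord_s(q)=:k$ and $n/k<\min(r,s)$ in the classical cases.

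Second, your intended intermediate goal is false: abelianness of the Sylow $r$-subgroup is \emph{not} equivalent to $\Phi_d^2\nmid|G|$, and the hypotheses of the theorem do not force $\Phi_d^2\nmid|G|$. For example, take $S=\PSL_n(q)$ with $r,s\mid(q-1)$ and $r,s>n$: the hypotheses hold (the $\{r,s\}$-part of any relevant class size is governed by Gaussian binomial coefficients, which are coprime to $rs$), the abelian Hall $\{r,s\}$-subgroup sits inside the diagonal torus, and $\Phi_1$ occurs to the power $n-1\geq 2$ in $|G|$. So step (3) of your plan, which seeks a contradiction whenever the Sylow $\Phi_d$-torus has rank at least $2$, would ``rule out'' cases in which the theorem is true; the relative Weyl group need not contain any nontrivial $r$- or $s$-element when $r,s$ exceed its order, which is exactly the surviving configuration in the paper (Hall subgroups of the shape $\GL_1(q^k)^c$ or $\GU_1(q^k)^c$ with $c>1$, see Propositions~\ref{sl}--\ref{clas}). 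Your cyclotomic/Sylow-torus viewpoint is close to how the paper treats the \emph{exceptional} groups (Propositions~\ref{prop:bad}--\ref{prop:d1=d2}, using \cite{BM}), but there too the contradiction is obtained by exhibiting specific element centralizers, not by excluding $\Phi_d^2$; and the entire classical-group case analysis (in particular the orthogonal groups, cases (a1)--(b4) of Proposition~\ref{clas}) is only promised in your sketch, not carried out. As it stands the proposal would need both a corrected organizing principle and the actual case-by-case work to become a proof.
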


\begin{prop}   \label{sl}
 Theorem \ref{simple1} holds for $S = \PSL_n(q)$.
\end{prop}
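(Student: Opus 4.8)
The plan is to work inside $G = \SL_n(q)$ (by Lemma~\ref{center}(i) we may pass to the simply connected group and quotient by the center at the end), set $k := \ord_r(q)$ and $\ell := \ord_s(q)$, and first pin down the possible values of $k$ and $\ell$ using the hypothesis on class sizes. The key principle is the same one used in the proof of Theorem~\ref{simple2}: if $\SL_n(q)$ contains a natural subgroup $\SL_m(q)$ (acting on a non-degenerate summand) with $m$ large enough, then a suitable $r$-element inside that subgroup will have class size in $G$ divisible by $s$, because the centralizer $\GL_{n-m}(q) \times (\text{torus})$ has order missing the factor $q^\ell - 1$ forced by $s$. Concretely, an $r$-element $g \in \SL_k(q) \hookrightarrow \SL_n(q)$ with $\bC_{\GL_k(q)}(g) \cong C_{(q^k-1)/(q-1)}$ gives $|g^G|$ equal to a $q$-power times $\prod_{j=n-k+1}^n (q^j-1)/(q^k-1)$, and this is divisible by $s$ unless $n - k < \ell$, i.e. unless $k > n - \ell$; symmetrically $\ell > n - k$. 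Combined with $k, \ell \le n$ this squeezes $k + \ell > n$, so $2\max(k,\ell) \ge k + \ell > n$, forcing the larger of the two, say $k$, to satisfy $k > n/2$; and then the smaller one $\ell$ must also be $> n - k$, which is $< n/2$.

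Next I would determine the Sylow subgroups explicitly. Since $k > n/2$, we have $\lfloor n/k \rfloor = 1$, so a Sylow $r$-subgroup $R$ of $\GL_n(q)$ is contained in the normalizer of a torus $C_{q^k-1} \times \GL_{n-k}(q)$ and, because $r \nmid |\GL_{n-k}(q)|$ (as $k$ is the order of $q$ mod $r$ and $n - k < k$), in fact $R$ lies in $\GL_k(q) \times \GL_{n-k}(q)$; moreover $R$ is cyclic, being a Sylow subgroup of the cyclic torus $C_{q^k-1}$ of $\GL_k(q)$ — here one uses that $r$ is odd so that $r \nmid (q^k - 1)/\Phi_k(q)$ issues do not arise, or more simply that the Sylow $r$-subgroup of $\GL_k(q)$ for $k = \ord_r(q)$ is cyclic. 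The same analysis applies to $s$ with $k$ replaced by $\ell$. Now I must arrange $R$ and $S_s$ (a Sylow $s$-subgroup) to commute and to land inside $\SL_n(q)$. Since $k > n/2 \ge \ell$ is not quite enough to separate the two tori, I would split into cases according to whether $k = \ell$ (impossible, as then $r, s$ would both be Zsigmondy-type primes for the same $q^k - 1$ and the torus $C_{q^k-1}^{\lfloor n/k\rfloor}$ would contain commuting Sylows automatically), $k > \ell$ with $k + \ell \le n$ — excluded above — or $k > \ell$ with $k + \ell > n$. In the surviving case the relevant subgroup is $\GL_k(q) \times \GL_{n-k}(q)$ with $\ell \mid$ order of $\GL_{n-k}(q)$ only if $\ell \le n-k$, which fails; so actually $s \nmid |\GL_{n-k}(q)|$ too, meaning the Sylow $s$-subgroup also sits in the $\GL_k(q)$ factor. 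Then both $R$ and $S_s$ are Sylow subgroups of the single cyclic torus $T = C_{q^k-1} \le \GL_k(q)$ (for their respective primes), hence $R \times S_s \le T$ is cyclic, in particular abelian; intersecting with $\SL_n(q)$ loses nothing since $T \cap \SL_k(q)$ still has full $r$- and $s$-parts (as $r, s \nmid q - 1$, because $k, \ell > 1$). Passing to $S = \PSL_n(q)$, the image is still an abelian Hall $\{r,s\}$-subgroup.

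The main obstacle I anticipate is the bookkeeping in the case analysis on $(k,\ell)$ when $n/k$ or $n/\ell$ could a priori exceed $1$: one has to be careful that the "natural $\SL_m(q)$ subgroup" argument is applied with the right $m$ and that the centralizer computation genuinely forces divisibility by $s$ — there are small-$n$ exceptions (e.g. $n = 3, 4$ with particular $q$) where Zsigmondy primes fail to exist and $k$ or $\ell$ could be small, and these must be handled by hand, presumably checking that the hypothesis simply cannot hold or that the Sylow subgroups are visibly abelian and commuting. A secondary subtlety is the descent from $\GL$ to $\SL$ to $\PSL$: one needs that the Hall subgroup of $\GL_n(q)$ we construct meets $\SL_n(q)$ in a Hall $\{r,s\}$-subgroup, which is where $r, s > 2$ and $r, s \ne p$ and $k, \ell \ge 2$ are all used to guarantee $r, s \nmid \gcd(n, q-1)$ and $r, s \nmid (q-1)$.
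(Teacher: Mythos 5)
Your reduction step extracts too little from the hypothesis, and the final construction breaks because of it. The criterion ``the class size of $g=\diag(h,I_{n-k})$ is divisible by $s$ unless $n-k<\ell$'' is not correct: the quantity $\prod_{j=n-k+1}^{n}(q^j-1)/(q^k-1)$ is divisible by $s$ as soon as some $j$ in the range $(n-k,\,n]$ other than $k$ is a multiple of $\ell$ --- for instance $n=7$, $k=5$, $\ell=4$ gives the factor $q^4-1$ even though $n-k=2<\ell$. The paper instead takes the test element in a Singer torus of $\GL_\kappa(q)$ with $\kappa=kr^m$, where $r^m\le n/k<r^{m+1}$; then $s\nmid|g^G|$ forces $\ell\ge\kappa\ge k$, hence (with the symmetric argument) $k=\ell$ \emph{and} $m=0$, i.e.\ $n/k<\min(r,s)$. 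Your weaker conclusion $k+\ell>n$ does not rule out $k\ne\ell$, and in that situation your construction fails: you place the Sylow $s$-subgroup inside the cyclic torus $C_{q^k-1}\le\GL_k(q)$, which requires $\ell\mid k$, not merely $\ell>n-k$ (again with $n=7,k=5,\ell=4$, the Sylow $s$-subgroup of $\GL_7(q)$ lies in a torus containing $C_{q^4-1}$, not in $C_{q^5-1}$). The configuration is in fact excluded by the hypothesis, but only by the stronger divisibility argument you did not make.

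Second, the case $k=\ell$ is not actually finished. That $C_{q^k-1}^{\lfloor n/k\rfloor}$ contains Sylow $r$- and $s$-subgroups of $\GL_n(q)$ requires $\lfloor n/k\rfloor<\min(r,s)$; otherwise the Sylow subgroups pick up the permutation part of $C_{q^k-1}\wr\SSS_{\lfloor n/k\rfloor}$ and need not be abelian or lie in the torus. This is exactly what the paper's case (a) (the conclusion $m=0$) and its separate case (b) for $k=\ell=1$ (i.e.\ $r,s\mid q-1$) establish, the latter via the elements with eigenvalues $\alpha,\alpha^{q},\dots,\alpha^{q^{r-1}},\alpha^{-(q^r-1)/(q-1)}$ and $\diag(\beta,\beta^{-1},1,\dots,1)$, which show that the hypothesis forces $n<\min(r,s)$. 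Your proposal dismisses $k=\ell$ as ``automatic'' and never treats $r,s\mid q-1$, so this branch is missing; note also that your final descent from $\GL$ to $\SL$ invokes $r,s\nmid q-1$, which fails precisely in that untreated case (a Hall $\{r,s\}$-subgroup of $\GL_n(q)$ does meet $\SL_n(q)$ in a Hall subgroup, so this can be repaired, but the torus bound cannot). A minor slip: the centralizer $C_{(q^k-1)/(q-1)}$ is the centralizer in $\SL_k(q)$, not in $\GL_k(q)$.
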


\begin{proof}
By Lemma \ref{center}(i) we may replace $S$ by $G = \SL_n(q)$.
Let $k := \ord_r(q)$ and $l := \ord_s(q)$; also let
$(q^k-1)_r =: r^t$. Since $r$ and $s$ divide $|S|$, $n \geq k,l$.
By symmetry we may assume $k \geq l$.

\smallskip
(a) Suppose first that $k \geq 2$, and so $r \nmid (q-1)$. Let $m \in \ZZ$ be
such that $r^m \leq n/k < r^{m+1}$ and denote
$\kappa := kr^m$. Since $r > 2$, $(q^\kappa-1)_r = r^{m+t}$. Furthermore, since
$r \nmid (q-1)$, the Singer cycle $\GL_1(q^{\kappa}) \leq \GL_{\kappa}(q)$
contains an $r$-element $h \in \SL_\kappa(q)$ of order $r^{m+t}$. Then for the
$r$-element $g := \diag(h,I_{n-\kappa}) \in G$ we have that
$$|g^G| = \frac{|\GL_n(q)|}{(q^\kappa-1) \cdot |\GL_{n-\kappa}(q)|}
        = \frac{|\GL_n(q)|}{|\GL_\kappa(q)| \cdot |\GL_{n-\kappa}(q)|} \cdot
                \frac{|\GL_\kappa(q)|}{q^\kappa-1}$$
which is divisible by
$$\frac{|\GL_\kappa(q)|_{p'}}{q^\kappa-1} = \prod^{\kappa-1}_{j=1}(q^j-1).$$
Since $s \nmid |g^G|$, we must have that $l \geq \kappa = kr^m \geq k \geq l$.
It follows that $k=l$, $m = 0$, i.e. $n/k < r$.

\smallskip
(b) Next we consider the case $k=l=1$, i.e. $r,s |(q-1)$.

Suppose first that $n \geq r+1$. Then we can find
$\alpha \in \overline{\FF}_q^\times$ of order $(q^r-1)_r = r^{t+1}$ and
consider the $r$-element $g\in G$ conjugate (in $\bG:=\SL_n(\overline{\FF}_q)$)
to
$$\diag(\alpha,\alpha^q, \ldots ,\alpha^{q^{r-1}},\alpha^{-\frac{q^r-1}{q-1}},
    \underbrace{1, \ldots ,1}_{n-r-1}).$$
Note that $1 \neq \alpha^{-\frac{q^r-1}{q-1}} \in \FF_q^\times \not\ni \alpha$.
It follows that
$$|g^G| =  \frac{|\SL_n(q)|}{(q^r-1) \cdot |\GL_{n-r-1}(q)|}
        =  \frac{|\GL_n(q)|}{|\GL_{r+1}(q)| \cdot |\GL_{n-r-1}(q)|} \cdot
                \frac{|\GL_{r+1}(q)|}{(q^r-1)(q-1)}$$
which is divisible by
$$\prod^{r-1}_{j=2}(q^j-1) \cdot (q^{r+1}-1),$$
a multiple of $s$, a contradiction.

We have shown that $n \leq r$, and so $n \leq s$ as well by symmetry. Assume
now that $n = r < s$. Then we can find $\beta \in \FF_q^\times$ of
order $s$ and consider the $s$-element
$$h := \diag(\beta,\beta^{-1}, \underbrace{1, \ldots ,1}_{r-2}) \in G.$$
Then
$$|h^G| =  \frac{|\SL_r(q)|}{(q-1) \cdot |\GL_{r-2}(q)|}
        =  q^{2r-3} \cdot \frac{q^r-1}{q-1} \cdot \frac{q^{r-1}-1}{q-1}$$
which is divisible by $r$, a contradiction.

\smallskip
(c) We have shown that $k =l$ and $n/k < \min(r,s)$. In this case, for
$c := \lfloor n/k \rfloor$ we have that
$$\GL_n(q) \geq \GL_k(q)^c \geq \GL_1(q^k)^c$$
contains an abelian Hall $\{r,s\}$-subgroup, and so does $G$.
\end{proof}

\begin{prop}   \label{su}
 Theorem \ref{simple1} holds for $S = \PSU_n(q)$.
\end{prop}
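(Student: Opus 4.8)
The plan is to mimic the structure of the proof of Proposition~\ref{sl} for $\PSL_n(q)$, making the standard replacement ``$q \mapsto -q$'' in the relevant order formulas, while keeping careful track of the sign subtleties that arise for unitary groups. By Lemma~\ref{center}(i) I would replace $S$ by $G = \SU_n(q)$. Recall that the relevant torus parameter is now $\ord_r(-q)$ (equivalently, $\ord_r(q^2)$ combined with a sign); so I set $e := \ord_r(-q)$ and $f := \ord_s(-q)$, with $(q^e - (-1)^e)_r = r^t$ (or the appropriate unitary analogue), and by symmetry assume $e \geq f$. Since $r, s \mid |\SU_n(q)|$ we get $n \geq e, f$ (using that $r \mid |\GL_i((-q))|$ iff $e \mid i$, which translates into divisibility conditions on $q^i - (-1)^i$ governing $|\GU_n(q)|$).

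The three cases would parallel those of Proposition~\ref{sl}. In case (a), $e \geq 2$: pick $m$ with $r^m e \leq n < r^{m+1} e$, set $\kappa := e r^m$, find an $r$-element $h \in \SU_\kappa(q)$ of order $r^{m+t}$ inside the appropriate Singer-type torus $\GU_1(q^\kappa)$ or $\GL_1(q^\kappa)$ (depending on the parity of $\kappa$ — this is exactly where unitary groups differ from linear ones, since $\GU_m(q)$ contains $\GL_1(q^{2m})$-type tori but the embedding of a cyclic torus of order $q^\kappa - (-1)^\kappa$ works uniformly). Then for $g := \diag(h, I_{n-\kappa})$ one computes
$$|g^G| = \frac{|\GU_n(q)|}{(q^\kappa-(-1)^\kappa)\cdot|\GU_{n-\kappa}(q)|}
  = \frac{|\GU_n(q)|}{|\GU_\kappa(q)|\cdot|\GU_{n-\kappa}(q)|}\cdot\frac{|\GU_\kappa(q)|}{q^\kappa-(-1)^\kappa},$$
and the last factor is divisible by $|\GU_\kappa(q)|_{p'}/(q^\kappa - (-1)^\kappa) = \prod_{j=1}^{\kappa-1}(q^j - (-1)^j)$, forcing $s$ to divide it unless $f \geq \kappa$, hence $e = f$ and $m = 0$. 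Case (b) treats $e = f = 1$ (so $r, s \mid q+1$): if $n \geq r+1$ I would produce an $r$-element conjugate in $\bG$ to $\diag(\alpha, \alpha^{-q}, \ldots, \alpha^{(-q)^{r-1}}, \alpha^{-\frac{q^r-(-1)^r}{q+1}}, 1, \ldots, 1)$ with $\alpha$ of order $(q^r+1)_r$, giving a class size divisible by a multiple of $s$; then if $n = r < s$ the element $\diag(\beta, \beta^{-1}, 1, \ldots, 1)$ with $\beta \in \FF_{q^2}^\times$ of order $s$ gives a class size divisible by $r$. Case (c), $e = f$ and $n/e < \min(r,s)$: then $\GU_n(q) \geq \GU_e(q)^{\lfloor n/e\rfloor}$ contains abelian Hall $\{r,s\}$-subgroups (each $\GU_e(q)$ contributing a cyclic Singer-type factor), and hence so does $G$.

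The main obstacle I anticipate is bookkeeping the unitary sign conventions consistently: whether the relevant cyclic torus has order $q^\kappa - 1$ or $q^\kappa + 1$ depends on the parity of $\kappa = e r^m$ (and $r$ odd means this parity equals that of $e$), and one must verify that $r$ really does divide the order of the chosen torus, that the $r$-part is exactly $r^{m+t}$, and that the element genuinely lies in $\SU$ rather than just $\GU$ (the determinant condition). A related subtle point is the base case in (b): one has to confirm that $\alpha^{-\frac{q^r-(-1)^r}{q+1}}$ lies in $\FF_{q^2}^\times$ (or $\FF_q^\times$) and is distinct from the Galois conjugates $\alpha^{(-q)^i}$, so that the centralizer structure is as claimed. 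Apart from these sign issues the computations are the same order-formula manipulations as in Proposition~\ref{sl}, and the Zsigmondy-type divisibility arguments go through verbatim with $q$ replaced by $-q$.
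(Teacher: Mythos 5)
Your proposal is correct and follows essentially the same route as the paper's proof: reduce to $\SU_n(q)$ via Lemma~\ref{center}(i), work with $\ord_r(-q)$ and $\ord_s(-q)$, use $r$-elements in Singer-type cyclic tori of order $q^\kappa-(-1)^\kappa$ whose class sizes force $k=l$ and $n/k<\min(r,s)$, handle the residual case $r,s\mid q+1$ with the two explicit diagonal elements, and finish with an abelian Hall $\{r,s\}$-subgroup inside a product of cyclic tori. The only difference is organizational: the paper splits into the cases $\ord_r(-q)$ odd and even (placing the torus in $\GU_1(q^\kappa)$ or in $\GL_1(q^{2\kappa})\le\GL_\kappa(q^2)\le\GU_{2\kappa}(q)$, respectively), whereas you treat both parities uniformly; the sign, determinant and centralizer verifications you flag are exactly the paper's ``one can check'' steps and do go through.
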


\begin{proof}
By Lemma \ref{center} we may replace $S$ by $G = \SU_n(q)$.
Let $k := \ord_r(-q)$ and $l := \ord_s(-q)$; also let
$(q^k-(-1)^k)_r =: r^t$. Since $r$, $s$ divide $|S|$, we have that $n \geq k,l$.
By symmetry we may assume $k \geq l$. For any monic irreducible polynomial
$f(x) \in \FF_{q^2}[x]$ with a nonzero root $\gamma$, let $\fv$ denote the
(unique) monic irreducible polynomial over $\FF_{q^2}$ with $\gamma^{-q}$ as
a root.

\smallskip
(a) First we consider the case $k$ is odd.

\smallskip
(a1) Suppose that $k \geq 3$ is odd, and so $r \nmid (q+1)$. Choose $m\in\ZZ$
such that $r^m \leq n/k < r^{m+1}$ and denote $\kappa := kr^m$. Since $r > 2$,
$(q^\kappa+1)_r = r^{m+t}$. Furthermore, since $r \nmid (q+1)$, the cyclic
subgroup $\GU_1(q^{\kappa}) \leq \GU_{\kappa}(q)$ contains an $r$-element
$h \in \SU_\kappa(q)$ of order $r^{m+t}$, with an eigenvalue
$\theta \in \overline{\FF}_q^\times$ of order $r^{m+t}$. One can check that
the minimal polynomial $f(x) \in \FF_{q^2}[x]$ for $\theta$ over $\FF_{q^2}$
has degree $\kappa$, and $f = \fv$. Now for the $r$-element
$g := \diag(h,I_{n-\kappa}) \in G$ we have that
$$|g^G| = \frac{|\GU_n(q)|}{(q^\kappa+1) \cdot |\GU_{n-\kappa}(q)|}
        = \frac{|\GU_n(q)|}{|\GU_\kappa(q)| \cdot |\GU_{n-\kappa}(q)|} \cdot
                \frac{|\GU_\kappa(q)|}{q^\kappa+1}$$
which is divisible by
$$\frac{|\GU_\kappa(q)|_{p'}}{q^\kappa+1}=\prod^{\kappa-1}_{j=1}(q^j-(-1)^j).$$
Since $s \nmid |g^G|$, we must have that $l \geq \kappa = kr^m \geq k \geq l$.
It follows that $k=l$, $m = 0$, i.e. $n/k < r$.

\smallskip
(a2) Next we consider the case $k=l=1$, i.e. $r,s |(q+1)$.

Suppose first that $n \geq r+1$. Then we can find
$\alpha\in\overline{\FF}_q^\times$ of order $(q^r+1)_r = r^{t+1}$ and consider
the $r$-element $g \in G$ conjugate (in $\bG := \SL_n(\overline{\FF}_q)$) to
$$\diag(\alpha,\alpha^{-q},\ldots,\alpha^{(-q)^{r-1}},\alpha^{-\frac{q^r+1}{q+1}},
    \underbrace{1, \ldots ,1}_{n-r-1}).$$
Note that $\alpha^{-\frac{q^r-1}{q-1}}$ has order $r^t|(q+1)$, and $f = \fv$
for the minimal polynomial $f(x)$ of $\alpha$ over $\FF_{q^2}$ (which has odd
degree $r$). Now one can check that
$$|g^G| =  \frac{|\SU_n(q)|}{(q^r+1) \cdot |\GU_{n-r-1}(q)|}
        =  \frac{|\GU_n(q)|}{|\GU_{r+1}(q)| \cdot |\GU_{n-r-1}(q)|} \cdot
                \frac{|\GU_{r+1}(q)|}{(q^r+1)(q+1)}$$
which is divisible by
$$\prod^{r-1}_{j=2}(q^j-(-1)^j) \cdot (q^{r+1}-1),$$
a multiple of $s$, a contradiction.

We have shown that $n \leq r$, and so $n \leq s$ as well by symmetry. Assume
now that $n = r < s$. Then we can find $\beta \in \FF_{q^2}^\times$ of order
$s|(q+1)$ and consider the $s$-element
$$h := \diag(\beta,\beta^{-1}, \underbrace{1, \ldots ,1}_{r-2}) \in G.$$
Then
$$|h^G| =  \frac{|\SU_r(q)|}{(q+1) \cdot |\GU_{r-2}(q)|}
        =  q^{2r-3} \cdot \frac{q^r+1}{q+1} \cdot \frac{q^{r-1}-1}{q+1}$$
which is divisible by $r$, a contradiction.

\smallskip
(a3) We have shown that $k =l$ and $n/k < \min(r,s)$. In this case, for
$c := \lfloor n/k \rfloor$ we have that
$$\GU_n(q) \geq \GU_k(q)^c \geq \GU_1(q^k)^c$$
contains an abelian Hall $\{r,s\}$-subgroup, and so does $G$.

\smallskip
(b) Now we consider the case $k$ is even: $k = 2k_1$; in particular,
$r \nmid (q+1)$. Choose $m \in \ZZ$ such that $r^m \leq n/k_1 < r^{m+1}$
and denote $\kappa := k_1r^m$. Since $r > 2$, $(q^{2\kappa}-1)_r = r^{m+t}$.
Furthermore, since $r \nmid (q+1)$, the cyclic subgroup
$$\GL_1(q^{2\kappa}) \leq \GL_\kappa(q^2) \leq \GU_{2\kappa}(q)$$
contains an $r$-element $h \in \SU_{2\kappa}(q)$ of order $r^{m+t}$, with an
eigenvalue $\theta \in \overline{\FF}_q^\times$ of order $r^{m+t}$. One can
check that the minimal polynomial $f(x) \in \FF_{q^2}[x]$ for $\theta$ over
$\FF_{q^2}$ has degree $\kappa$, and $f \neq \fv$.

Now for the $r$-element $g := \diag(h,I_{n-2\kappa}) \in G$ we have that
$$|g^G| = \frac{|\GU_n(q)|}{(q^{2\kappa}-1) \cdot |\GU_{n-2\kappa}(q)|}
        = \frac{|\GU_n(q)|}{|\GU_{2\kappa}(q)| \cdot |\GU_{n-2\kappa}(q)|}\cdot
                \frac{|\GU_{2\kappa}(q)|}{q^{2\kappa}-1}$$
which is divisible by
$$\frac{|\GU_{2\kappa}(q)|_{p'}}{q^{2\kappa}-1}
   = \prod^{2\kappa-1}_{j=1}(q^j-(-1)^j).$$
Since $s \nmid |g^G|$, we must have that $l \geq 2\kappa = kr^m \geq k \geq l$.
It follows that $k=l$, $m = 0$, i.e. $n/k < r$. Hence, by symmetry,
$n/k < \min(r,s)$. Now for $c := \lfloor n/k \rfloor$ we have that
$$\GU_n(q) \geq \GU_k(q)^c > \GL_{k_1}(q^2)^c \geq \GL_1(q^k)^c$$
contains an abelian Hall $\{r,s\}$-subgroup, and so does $G$.
\end{proof}

\begin{prop}   \label{clas}
 Theorem \ref{simple1} holds for $S$ a simple symplectic or orthogonal group.
\end{prop}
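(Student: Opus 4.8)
The plan is to mirror the proofs of Propositions~\ref{sl} and~\ref{su}. By Lemma~\ref{center}(i) we may replace $S$ by a quasi-simple classical group $G$ acting on a non-degenerate formed space $V = \FF_q^N$: a symplectic group $\Sp_{2n}(q)$, or a spin cover of $\SO_{2n+1}(q)$ or of $\SO^\pm_{2n}(q)$. Since $r$ and $s$ are odd primes different from $p$ and the centres involved are $2$-groups, passing between the spin group, $\SO$, $\Omega$ and $S$ affects neither the Sylow $r$- nor the Sylow $s$-subgroup, so it is harmless to argue directly with $G\in\{\Sp_{2n}(q),\SO_{2n+1}(q),\SO^\pm_{2n}(q)\}$ on $V$ (and in characteristic $2$ the family $\SO_{2n+1}(q)$ may be dropped, as it coincides with $\Sp_{2n}(q)$). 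First we clear away the low-rank cases in which $S$ is isomorphic to a projective linear or unitary group — chiefly $\Omega_6^+(q)\cong\PSL_4(q)$ and $\Omega_6^-(q)\cong\PSU_4(q)$ — by appeal to Propositions~\ref{sl} and~\ref{su}, and dispose of the few remaining small exceptions ($\Sp_4(q)$, and the degenerate cases of rank $\leq 2$) by direct computation in \cite{GAP}, so that afterwards $n$ may be taken large enough for the generic arguments below.

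For an odd prime $\ell\neq p$ dividing $|G|$ put $e = \ord_\ell(q)$ and let $a_\ell$ be the $\FF_q$-dimension of the smallest non-degenerate $F$-stable subspace $W\leq V$ carrying a fixed-point-free $\ell$-element of $\mathrm{Cl}(W)$: a hyperbolic $2e$-space on which a Singer cycle $\GL_1(q^e)\leq\GL_e(q)$ acts when $e$ is odd, a unitary space of type $\GU_{e/2}(q)$ when $e\equiv2\pmod 4$, and a minus-type $e$-space when $4\mid e$. A semisimple $\ell$-element $g\in G$ then produces an orthogonal decomposition $V = C_V(g)\perp W_1\perp\cdots\perp W_t$ with each $W_i$ a perpendicular sum of such $a_\ell$-blocks, and $\bC_G(g)$ is correspondingly a direct product of the classical group on $C_V(g)$ with general linear or unitary groups acting on the $W_i$. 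By symmetry we assume $a_r\geq a_s$.

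The argument now divides exactly as in Propositions~\ref{sl} and~\ref{su}. \emph{Generic case:} if $N$ leaves room, alongside a non-trivial non-degenerate complement, for a single linear-type block of dimension $\kappa$ equal to the largest multiple $b\,r^m$ ($r^m\geq1$) of the basic linear-block size $b\in\{e,2e\}$ for $r$ that fits, then taking $g=\diag(h,I)$ with $h$ generating $\GL_1(q^\kappa)\leq\GL_\kappa(q)$ inside that block, the formula $|g^G|=|G|/|\bC_G(g)|$ together with the cancellation from Proposition~\ref{sl}(a) shows $|g^G|$ divisible by $|\GL_\kappa(q)|_{p'}/(q^\kappa-1)=\prod_{j=1}^{\kappa-1}(q^j-1)$, hence by $s$ unless $\ord_s(q)\geq\kappa$; this bound and its symmetric counterpart force $\ord_r(q)=\ord_s(q)$, $m=0$, and $N$ small compared with $\ord_r(q)$. \emph{Boundary cases:} the finitely many leftover configurations — $N$ only slightly exceeding the basic block dimension, or $N=a_r$ with $a_s$ larger, and in the orthogonal case the various admissible types for $C_V(g)$ and for the blocks — are handled by explicit class-size computations in the style of Proposition~\ref{sl}(b) and Proposition~\ref{su}(a2),(b), exhibiting an $r$- or $s$-element whose class size picks up the forbidden prime through a short cyclotomic product like $\prod_{j=2}^{a-1}(q^j\mp1)\cdot(q^{a+1}\mp1)$. \emph{Conclusion:} once $a_r=a_s=:a$ and $N$ lies below the resulting threshold, $V$ is a perpendicular sum of at most $c:=\lfloor N/a\rfloor$ blocks of dimension $a$ together with an anisotropic remainder of order coprime to $rs$, so
$$G\ \geq\ \mathrm{Cl}(W_1)\times\cdots\times\mathrm{Cl}(W_c)\ \geq\ \GL_1(q^a)^c\ \text{ or }\ \GU_1(q^{a/2})^c,$$
and the right-hand side contains an abelian Hall $\{r,s\}$-subgroup; hence so do $G$ and $S$.

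The real work — and the main obstacle — is the bookkeeping peculiar to the orthogonal groups. There are three families ($B_n$, $D_n$, $\tw2 D_n$) rather than one, so one must keep track of the $\pm$ type of every block and of $C_V(g)$, subject to the constraint that the product of the block discriminants recovers the type of $V$; this can block a naive perpendicular decomposition and must be absorbed by enlarging a block with a hyperbolic plane, the orthogonal counterpart of the ``$r+1$'' construction in Propositions~\ref{sl}(b) and~\ref{su}(a2). One must also verify that the Singer-type linear and unitary subgroups — used both to derive the contradictions above and to exhibit the final Hall subgroup — actually lie in $\Omega$ and not merely in $\SO$, and record the harmless fact that the centre of the relevant spin group may be $C_2\times C_2$ or $C_4$. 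Once these distinctions are organised, each individual estimate is a routine cyclotomic computation of the sort already done for $\SL$ and $\SU$.
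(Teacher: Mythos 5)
Your overall strategy---reduce via Lemma~\ref{center}(i), locate $r$- and $s$-elements in Singer-type subgroups of non-degenerate blocks, force the orders $\ord_r(q)$, $\ord_s(q)$ to coincide by class-size divisibility, then build an abelian Hall subgroup from a perpendicular sum of blocks---is the same as the paper's. But as written there are genuine gaps, and two specific claims are wrong. First, you cannot dispose of $\Sp_4(q)$ and the rank $\le 2$ cases ``by direct computation in \cite{GAP}'': these are infinite families (one group for each prime power $q$), so no finite computation covers them; the paper needs no such exclusion because its argument is uniform in the rank. Second, and more seriously, your concluding claim---that after forcing $a_r=a_s$ the Hall subgroup is $\GL_1(q^a)^c$ or $\GU_1(q^{a/2})^c$---is false in general, because the hypotheses do \emph{not} force $\ord_r(q)=\ord_s(q)$ in the orthogonal case. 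The paper's case (b3) shows that the configuration $k=\ord_r(q)=2k_1$ even, $l=\ord_s(q)=k_1$ odd, $G=\SO^-_{4l}(q)$ survives all class-size tests, and there the abelian Hall $\{r,s\}$-subgroup is the mixed product $\GL_1(q^l)\times\GU_1(q^l)<\SO^+_{2l}(q)\times\SO^-_{2l}(q)$, which your either/or construction cannot produce. Reaching exactly this configuration and excluding everything else is the content of the divisibility conditions (\ref{case1})--(\ref{caseb32}); this is not ``finitely many leftover configurations'' but an infinite family parametrized by $n$, $k_1$, $a$ and the type of $G$, and it is where the real proof lives.

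A second structural omission: when $G$ itself consists of a single block, i.e.\ $G=\SO^-_{2kr^m}(q)$ (paper's case (a3)) or $G=\SO^+_{2k_1r^m}(q)$ (case (b4)), your generic element $\diag(h,I)$ with a nontrivial non-degenerate complement does not exist, and your ``boundary'' paragraph does not say what replaces it; the paper must instead use elements built from $r-1$, $a$ or $a-1$ copies of $h$ on smaller blocks, with a separate divisibility analysis that feeds back into the determination of $k$, $l$, $m$. Likewise the sign bookkeeping you yourself flag as ``the main obstacle'' (which products of minus-type blocks embed in a group of given type---compare the chains in the paper's (a1) and (b1), where the minus-type group only accommodates $c-1$ blocks) is precisely what a complete proof has to carry out and yours does not. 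So the proposal is a correct outline of the generic step of the paper's argument, but the orthogonal-specific case analysis that constitutes most of the actual proof is missing or merely asserted.
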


\begin{proof}
By Lemma~\ref{center}(i) we may replace $S=\PSp_{2n}(q)$ by $G := \Sp_{2n}(q)$;
set $d := 2n$, and $\Cl = \Cl^+ = \Cl^- := \Sp$ in this case.
If $S = P\Omega^{(\eps)}_d(q)$, then, since both $r$ and $s$ are odd,
we may replace $S$ by $G := \SO^{(\eps)}_d(q)$; here $\eps = \pm$, and
we set $d = 2n$ or $2n+1$, and $\Cl:=\SO$.
Let $k := \ord_r(q)$ and $l := \ord_s(q)$; also let $(q^k-1)_r =: r^t$.
Since $r$ divides $|S|$, we have that $2n \geq k$; moreover,
$n \geq k$ if $k$ is odd. The same conditions hold for $l$. By symmetry we may
assume $k \geq l$. For any monic irreducible polynomial $f(x) \in \FF_q[x]$
with a nonzero root $\gamma$, let $\fv$ denote the (unique) monic irreducible
polynomial over $\FF_q$ with $\gamma^{-1}$ as a root.

\smallskip
(a) First we consider the case $k$ is odd; in particular,
$k \leq n$. Let $m \in \ZZ$ be such that
$r^m \leq n/k < r^{m+1}$ and denote
$\kappa := kr^m$. Since $r > 2$, $(q^\kappa-1)_r = r^{m+t}$.

\smallskip
(a1) Assume that $k= l$ and $n/k<\min(r,s)$. Setting $c:=\lfloor n/k \rfloor$,
we get chains of subgroups of index coprime to $rs$
$$\Sp_{2n}(q) \geq \Sp_{2kc}(q) > \GL_{kc}(q) \geq \GL_k(q)^c \geq \GL_1(q^k)^c,$$
$$\SO_{2n+1}(q) \geq \SO^+_{2n}(q) \geq \SO^+_{2kc}(q) > \GL_{kc}(q)
                \geq \GL_k(q)^c \geq \GL_1(q^k)^c,$$
$$\SO^-_{2n}(q) \geq \SO^+_{2n-2}(q) \geq \SO^+_{2k(c-1)}(q) > \GL_{k(c-1)}(q)
                \geq \GL_k(q)^{c-1} \geq \GL_1(q^k)^{c-1},$$
and so $G$ contains an abelian Hall $\{r,s\}$-subgroup.
So now we will assume that we are not in this case.

\smallskip
(a2) Suppose that $G \neq \SO^-_{2\kappa}(q)$, and so $G$ contains a natural
subgroup $\Cl^+_{2\kappa}(q)$. Then the cyclic subgroup
$$\GL_1(q^{\kappa}) \leq \GL_{\kappa}(q) < \Cl^+_{2\kappa}(q)$$
contains an $r$-element $h$ of order $r^{m+t}$, with an eigenvalue
$\theta \in \overline{\FF}_q^\times$ of order $r^{m+t}$. One can check that
the minimal polynomial $f(x) \in \FF_q[x]$ for $\theta$ over $\FF_q$ has
degree $\kappa$, and $f \neq \fv$. Now for the $r$-element
$g := \diag(h,I_{d-2\kappa}) \in G$ we have that
$$|g^G| = \frac{|\Cl^\eps_d(q)|}{(q^\kappa-1) \cdot |\Cl^\eps_{d-2\kappa}(q)|}
        = \frac{|\Cl^\eps_d(q)|}{|\Cl^+_{2\kappa}(q)| \cdot |\Cl^\eps_{d-2\kappa}(q)|} \cdot
                \frac{|\Cl^+_{2\kappa}(q)|}{q^\kappa-1}$$
which is divisible by
$$\frac{|\Cl^+_{2\kappa}(q)|}{q^\kappa-1},$$
a multiple of $\prod^{\kappa-1}_{j=1}(q^{2j}-1)$.
Since $s \nmid |g^G|$, we must have that $l \geq \kappa = kr^m \geq l$.
It follows that $k=l$, $m = 0$, i.e. $n/k < r$.

As the roles of $r$ and $s$ are symmetric, repeating the above argument but
with $r$ replaced by $s$ in the case $G \neq \SO^-_{2ks^i}(q)$ for $i \geq 0$,
we see that $n/k < s$ and arrive at (a1).

\smallskip
(a3) It remains to consider the case $G = \SO^-_{2\kappa}(q)$ (so that
$n = \kappa = kr^m$). Since $r \nmid (q^k+1)$, we have
that $m \geq 1$. Set $\kappa_1 := kr^{m-1}$. Then the
cyclic subgroup
$$\GL_1(q^{\kappa_1}) \leq \GL_{\kappa_1}(q) < \SO^+_{2\kappa_1}(q)$$
contains an $r$-element $h_1$ of order $r^{m+t-1}$, with an eigenvalue
$\theta_1 \in \overline{\FF}_q^\times$ of order $r^{m+t-1}$. One can check
that the minimal polynomial $f_1(x) \in \FF_q[x]$ for $\theta'$ over
$\FF_q$ has degree $\kappa_1$, and $f_1 \neq \fv_1$. Now for the
$r$-element $g_1 := \diag(h_1,I_{2n-2\kappa_1}) \in G$ we have that
$$|(g_1)^G| = \frac{|\SO^-_{2n}(q)|}{(q^{\kappa_1}-1) \cdot |\SO^-_{2n-2\kappa_1}(q)|}
    = \frac{|\SO^-_{2n}(q)|}{|\SO^+_{2\kappa_1}(q)| \cdot |\SO^-_{2n-2\kappa_1}(q)|} \cdot
                \frac{|\SO^+_{2\kappa_1}(q)|}{q^{\kappa_1}-1}$$
which is divisible by
$$\frac{|\SO^+_{2\kappa_1}(q)|_{p'}}{q^{\kappa_1}-1}
    = \prod^{\kappa_1-1}_{j=1}(q^{2j}-1).$$
Since $s \nmid |(g_1)^G|$, we must have that $l\geq\kappa_1 = kr^{m-1} \geq l$.
It follows that $k=l$, $m = 1$, $n = kr$, $G = \SO^-_{2kr}(q)$.

Now the roles of $r$ and $s$ are symmetric, and $G \neq \SO^-_{2ks^i}(q)$ for
any $i \geq 0$. So, repeating the argument in (a2) but with $r$ replaced by
$s$, we see that $r = n/k < s$. In this case, as shown in (a1), the subgroup
$\SO_{2kr-1}(q)$ of index coprime to $rs$ in $G$ contains an abelian Hall
$\{r,s\}$-subgroup $H$, and so $H$ is also an abelian Hall $\{r,s\}$-subgroup
of $G$.

\smallskip
(b) Now we consider the case $k$ is even; in particular, $n \geq k_1 := k/2$.
Let $m \in \ZZ$ be such that $r^m \leq n/k_1 < r^{m+1}$ and denote
$\kappa := k_1r^m$. By the choice of $k$, $r \nmid (q^{k_1}-1)$, whence
$r|(q^{k_1}+1)$. Since $r > 2$, $(q^\kappa+1)_r = r^{m+t}$.

\smallskip
(b1) Assume that $k = l$ and $n/k_1 < \min(r,s)$. Setting $c := \lfloor n/k_1 \rfloor$, we get chains of subgroups of index coprime to $rs$
$$\Sp_{2n}(q) \geq \Sp_{2k_1c}(q) \geq \Sp_{2k_1}(q)^c \geq \Sp_2(q^{k_1})^c
              \geq \GU_1(q^{k_1})^c,$$
$$\SO_{2n+1}(q) \geq \SO^{(-1)^c}_{2n}(q) \geq
  \SO^{(-1)^c}_{2k_1c}(q) \geq \SO^-_{2k_1}(q)^c
                > \GU_1(q^{k_1})^c,$$
$$\SO^{(-1)^{c-1}}_{2n}(q)>\SO_{2n-1}(q) \geq \SO^{(-1)^{c-1}}_{2k_1(c-1)}(q)
               \geq \SO^-_{2k_1}(q)^{c-1} \geq \GU_1(q^{k_1})^{c-1},$$
and so $G$ contains an abelian Hall $\{r,s\}$-subgroup.
So we will assume that we are not in this case.

\smallskip
(b2) Suppose that $G \neq \SO^+_{2\kappa}(q)$, and so $G$ contains a natural
subgroup $\Cl^-_{2\kappa}(q)$. Then the cyclic subgroup
$$\GU_1(q^{\kappa}) < \Cl^-_{2\kappa}(q)$$
contains an $r$-element $h$ of order $r^{m+t}$, with an eigenvalue
$\theta \in \overline{\FF}_q^\times$ of order $r^{m+t}$. One can check that the
minimal polynomial $f(x) \in \FF_q[x]$ for $\theta$ over $\FF_q$ has degree
$2\kappa$, and
$f = \fv$. Now for the
$r$-element $g := \diag(h,I_{d-2\kappa}) \in G$ we have that
$$|g^G| = \frac{|\Cl^\eps_d(q)|}{(q^\kappa+1)\cdot |\Cl^{-\eps}_{d-2\kappa}(q)|}
        = \frac{|\Cl^\eps_d(q)|}{|\Cl^-_{2\kappa}(q)| \cdot |\Cl^{-\eps}_{d-2\kappa}(q)|} \cdot
                \frac{|\Cl^-_{2\kappa}(q)|}{q^\kappa+1}$$
which is divisible by
$$\frac{|\Cl^-_{2\kappa}(q)|}{q^\kappa+1},$$
a multiple of $\prod^{\kappa-1}_{j=1}(q^{2j}-1)$.
Since $s \nmid |g^G|$, we must have that $l \geq \kappa = k_1r^m$.
If $m \geq 1$ in addition, then $l \geq k_1r \geq 3k_1 > k$, a contradiction.
So $m = 0$, $n/k_1 < r$, and $k/2 =k_1 \leq l \leq k$.

Now if $2|l < k$, then $s$ divides $q^{2j}-1$ with $1 \leq j := l/2 < \kappa$,
again a contradiction. So when $2|l$, we must have that $k=l$. In this case,
the roles of $r$ and $s$ are symmetric. Therefore, in the case
$G \neq \SO^+_{2ks^i}(q)$ for $i \geq 0$, repeating the above argument but
with $r$ replaced by $s$, we see that $n/k < s$ and arrive at (b1).

\smallskip
(b3) Here we consider the case where $G \neq \SO^+_{2\kappa}(q)$ but $l$ is
odd. As shown in (b2), we now have $n/k_1 < r$, $k/2 = k_1 \leq l < k$.
Write $n = a k_1 +b$ with $1 \leq a < r$ and $0 \leq b < k_1$. We again
consider the element $h \in \GU_1(q^\kappa)$ as in (b2).

Assume in addition that $G \neq \SO^{-\alpha}_{2a\kappa}(q)$ with
$\alpha := (-1)^a$, and so $G$ contains a natural subgroup
$\Cl^{(-1)^a}_{2a\kappa}(q) > \GU_{1}(q^\kappa)^a$. Now for the $r$-element
$$g_1:= \diag(\underbrace{h, \ldots, h}_{a},I_{d-2a\kappa}) \in G$$
we have that
$$|(g_1)^G| = \frac{|\Cl^\eps_d(q)|}{|\GU_{a}(q^\kappa) \cdot |\Cl^{\alpha\eps}_{d-2a\kappa}(q)|} =
                \frac{|\Cl^\eps_d(q)|}{|\Cl^{\alpha}_{2a\kappa}(q)| \cdot |\Cl^{\alpha\eps}_{d-2a\kappa}(q)|} \cdot
                \frac{|\Cl^\alpha_{2a\kappa}(q)|}{|\GU_a(q^\kappa)|}$$
which is divisible by
$$\frac{|\Cl^\alpha_{2a\kappa}(q)|}{|\GU_a(q^\kappa)|}.$$
Since $s \nmid |(g_1)^G|$, we must have that
\begin{equation}\label{case1}
  s \nmid \prod^{ak_1-1}_{j=1,~k_1 \nmid j}(q^{2j}-1) \cdot \prod^{a-1}_{i=1}(q^{ik_1}+(-1)^i).
\end{equation}
In particular, if $a \geq 2$, then $s \nmid (q^{k_1}-1)$ and so $l > k_1$.
As $l \leq k$ and $l$ is odd, it also follows that $k_1 \nmid l$.
In this case, $s |(q^{2j}-1)$ with
$1 \leq j := l \leq k-1 \leq ak_1-1$ and $k_1 \nmid j$, contrary to
(\ref{case1}). So $a = 1$ and $k_1 \leq n < 2k_1 = k$. Also, recall that $l$
is odd and $k_1 \leq l \leq n$. As $s$ divides $|S|$, we must have
that $G \neq \SO^-_{2l}(q)$, and so $G$ contains a natural
subgroup $\Cl^+_{2l}(q)$. Then the cyclic subgroup
$$\GL_1(q^{l}) < \Cl^+_{2l}(q)$$
contains an $s$-element $h_2$ of order $(q^l-1)_s$, with an eigenvalue
$\theta_2 \in \overline{\FF}_q^\times$ of order $(q^l-1)_s$. One can check
that the minimal polynomial $f_2(x) \in \FF_q[x]$ for $\theta_2$ over $\FF_q$
has degree $l$, and $f_2 \neq \fv_2$. Now for the $s$-element
$g_2 := \diag(h_2,I_{d-2l}) \in G$ we have that
\begin{equation}\label{caseb31}
  |g_2^G| = \frac{|\Cl^\eps_d(q)|}{(q^l-1)\cdot |\Cl^{\eps}_{d-2l}(q)|}
        = \frac{|\Cl^\eps_d(q)|}{|\Cl^+_{2l}(q)| \cdot |\Cl^{\eps}_{d-2l}(q)|} \cdot
                \frac{|\Cl^+_{2l}(q)|}{q^l-1}
\end{equation}
which is divisible by
$$\frac{|\Cl^+_{2l}(q)|}{q^l-1},$$
a multiple of $\prod^{l-1}_{j=1}(q^{2j}-1)$.
Since $r \nmid |g_2^G|$, we must have that $k_1 \geq l$, whence in fact
$k_1 = l$. But in this case, $r \nmid (q^l-1) \cdot |\Cl^\eps_{d-2l}(q)|$,
and so (\ref{caseb31}) implies that $r$ divides $|g_2^G|$, a contradiction.

Now suppose that $G = \SO^{-\alpha}_{2a\kappa}(q)$. Since $r||G|$, we must
have that $a \geq 2$. Now for the $r$-element
$$g_3 := \diag(\underbrace{h, \ldots, h}_{a-1},I_{2\kappa}) \in G$$
we have that
$$|(g_3)^G| = \frac{|\SO^{-\alpha}_{2a\kappa}(q)|}{|\GU_{a-1}(q^\kappa)
              \cdot |\SO^{+}_{2\kappa}(q)|},$$
and so
\begin{equation}\label{caseb32}
  s \nmid \frac{\prod^{ak_1-1}_{j=1,~k_1 \nmid j}(q^{2j}-1) \cdot
          \prod^{a}_{i=2}(q^{ik_1}+(-1)^i)}{\prod^{k_1-1}_{i=1}(q^{2i}-1)}.
\end{equation}
Since $l \geq k_1$ is odd, $s$ does not divide the denominator in
(\ref{caseb32}). On the other hand, if $k_1 \nmid l$, then $s$ divides
$q^{2l}-1$ and so divides the numerator of (\ref{caseb32}), a contradiction.
So $k_1 |l<k=2k_1$, i.e. $l = k_1$.
If $a \geq 3$, then $s$ divides $q^{3k_1}-1$ and so divides the numerator,
again a contradiction. We conclude that $a = 2$, $G = \SO^-_{4l}(q)$.
In this case,
$$\GL_1(q^l) \times \GU_1(q^l) <
  \SO^+_{2l}(q) \times \SO^-_{2l}(q) < \SO^-_{4l}(q)$$
contains an abelian Hall $\{r,s\}$-subgroup of $G$.

\smallskip
(b4) It remains to consider the case $G = \SO^+_{2\kappa}(q)$ (so that
$n = \kappa = k_1r^m$). Since $r \nmid (q^{k_1}-1)$, we have that
$m \geq 1$. Set $\kappa_1 := k_1r^{m-1}$. Then the cyclic subgroup
$$\GU_1(q^{\kappa_1}) < \SO^-_{2\kappa_1}(q)$$
contains an $r$-element $h_1$ of order $r^{m+t-1}$, with an eigenvalue
$\theta_1 \in \overline{\FF}_q^\times$ of order $r^{m+t-1}$. One can check
that the minimal polynomial $f_1(x) \in \FF_q[x]$ for $\theta_1$ over
$\FF_q$ has degree $2\kappa_1$, and $f_1 = \fv_1$. Now for the $r$-element
$$g_1 := \diag(\underbrace{h_1, \ldots, h_1}_{r-1},I_{2\kappa_1}) \in G$$
we have that
$$|g^G| = \frac{|\SO^+_{2\kappa_1 r}(q)|}{|\GU_{r-1}(q^{\kappa_1})| \cdot |\SO^+_{2\kappa_1}(q)|} =
                \frac{|\SO^+_{2\kappa_1 r}(q)|}{|\SO^+_{2\kappa_1(r-1)}(q)| \cdot |\SO^+_{2\kappa_1}(q)|} \cdot
                \frac{|\SO^+_{2\kappa_1(r-1)}(q)|}{|\GU_{r-1}(q^{\kappa_1})|}.$$
Since $s \nmid |g^G|$, we must have that
\begin{equation}\label{case2}
  s \nmid \frac{|\SO^+_{2\kappa_1(r-1)}(q)|_{p'}}{|\GU_{r-1}(q^{\kappa_1})|_{p'}} =
    \prod^{\kappa_1(r-1)-1}_{j=1,~\kappa_1 \nmid j}(q^{2j}-1) \cdot
    \prod^{r-2}_{i=1}(q^{i\kappa_1} + (-1)^i).
\end{equation}
Now if $\kappa_1 \nmid l$, then we have $s|(q^{2l}-1)$ with
$1 \leq l < k = 2k_1 \leq \kappa_1(r-1)$,
contrary to (\ref{case2}).  So $k_1r^{m-1} = \kappa_1|l \leq k = 2k_1$,
yielding $m = 1$ and $k_1|l$. If furthermore $l = k_1$ then again
$s|(q^{\kappa_1}-1)$, contradicting~(\ref{case2}). Thus $k=l$ and
$G = \SO^+_{2k_1r}(q)$.

Now the roles of $r$ and $s$ are symmetric, and $G \neq \SO^+_{2k_1s^i}(q)$
for any $i \geq 0$. So, repeating the argument in (b2) but with $r$ replaced
by $s$, we see that $r = n/k_1 < s$. In this case, as shown in (b1), the
subgroup $\SO_{2k_1r-1}(q)$ of index coprime to $rs$ in $G$ contains
an abelian Hall $\{r,s\}$-subgroup $H$, and so $H$ is also an abelian
Hall $\{r,s\}$-subgroup of $G$.
\end{proof}

We now turn to the case of exceptional simple groups of Lie type.

\begin{prop}   \label{prop:bad}
 Let $\bG$ be simple such that $G=\bG^F$ is of exceptional type. Let $r\ne2$
 be a bad prime for $\bG$, or $r=3$ when $G=\tw3D_4(q)$. Let $S\le G$ be
 a Sylow $r$-subgroup of $G$. Then $\bC_G(S)=\bZ(G)\bZ(S)$, unless $G=E_7(q)$
 and $r=3$, when $\bC_G(S)=\bZ(S)T$ with $T$ a torus of order $q-\eps$,
 where $q\equiv\eps\pmod3$.
\end{prop}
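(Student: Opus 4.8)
First, $\bZ(G)\bZ(S)\subseteq\bC_G(S)$ is automatic (and so is $\bZ(S)T\subseteq\bC_G(S)$, once the torus $T$ of the exceptional case is in hand), so the content is the reverse inclusion. I would begin with the standard reduction: putting $C:=\bC_G(S)$, every $r$-subgroup of $C$ centralises $S$, hence lies in $S$ and so in $\bZ(S)$; thus $\bZ(S)$ is the unique, central, Sylow $r$-subgroup of $C$, and $C=\bZ(S)\times K$ with $K$ the set of $r'$-elements of $C$. It then suffices to show $K\subseteq\bZ(G)$ in general, and that $K$ is the $r'$-part of a one-dimensional torus of order $q-\eps$ when $G=E_7(q)$, $r=3$; equivalently, to control the semisimple $r'$-elements centralising all of $S$. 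We may assume $r\ne p$.

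The main idea is to force a large abelian torus-subgroup into $S$ and then transport the problem onto that torus. Set $d:=\ord_r(q)$ and choose an $F$-stable maximal torus $\bT$ of $\bG$ obtained by extending a Sylow $\Phi_d$-torus to a maximally split torus of its centraliser. Using $|G|_r=\prod_i(q^{d_i}-1)_r$ (with $d_i$ the degrees of $W$) and Lifting the Exponent, one checks case by case the identity $|G|_r=|\bT^F|_r\cdot|\bN_G(\bT)/\bT^F|_r$; hence a $G$-conjugate of $S$ lies in $\bN_G(\bT)$, and then $A:=S\cap\bT^F$ is forced to be the full Sylow $r$-subgroup of $\bT^F$ --- normal in $S$ --- with $S/A\cong P:=\Syl_r(\bN_G(\bT)/\bT^F)$ acting on $A$ through its image in $W$. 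Since $r\in\{3,5\}$, no root of $\bG$ is trivial on $A$ (when $A$ has full rank in $\bT$ this says no root lies in $rX^*(\bT)$; for twisted $\bT$ one argues via the relative root system, and in the $\Phi_4$-torus case of $E_8$ via a norm estimate in the root lattice), and $W$ acts faithfully on $X^*(\bT)/r$ (for instance $W(E_7)/\{\pm1\}\cong\Sp_6(2)$ is simple, so its $7$-dimensional reduction modulo $3$ is faithful, and the other cases are similar). These two facts force $\bC_\bG(A)=\bT$, so $\bC_G(A)=\bT^F$; as $A\le S$ this gives
\[
  \bC_G(S)=\bC_{\bT^F}(S)=(\bT^P)^F,\qquad \bT^P:=\bigcap_{w\in P}\Ker(w-1\colon\bT\to\bT),
\]
and $\bZ(S)$ is the Sylow $r$-subgroup of $(\bT^P)^F$.

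What remains is to evaluate $(\bT^P)^F$ over the short list of cases: the bad primes $r\ne2$ of the exceptional groups, together with $r=3$ for $\tw3D_4(q)$, and $d\in\{1,2\}$ for $r=3$, $d\in\{1,2,4\}$ for $r=5$. In every case but one, $P$ acts on the reflection representation with no nonzero fixed vector, so $\bT^P$ is finite --- the $P$-coinvariants of $X^*(\bT)$ --- a direct computation gives $(\bT^P)^F=\bZ(S)$, and this equals $\bZ(G)\bZ(S)$ since $\bZ(G)$ happens to be an $r$-group there. The exception is $G=E_7(q)$, $r=3$: here $\Syl_3(W(E_7))=\Syl_3(W(E_6))$ under $W(E_6)\times W(A_1)\le W(E_7)$ coming from the subsystem $E_6+A_1\subset E_7$, and this $3$-group fixes exactly the one-dimensional ``$A_1$-direction'' of $X^*(\bT)\otimes\QQ$; hence $(\bT^P)^\circ=:T$ is a one-dimensional $F$-stable torus, $T^F$ is cyclic of order $q-\eps$ with $q\equiv\eps\pmod3$ (the sign according as $d=1$ or $d=2$), and one gets $\bC_G(S)=(\bT^P)^F=\bZ(S)T^F=\bZ(S)T$.

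I expect the hardest part to be the structural claim about $S$ at the bad primes, since the generalised ($\Phi_d$-)Sylow theory does not apply verbatim there: one must verify group by group --- with $\tw3D_4(q)$, $\tw2E_6(q)$ and $E_8(q)$ at $\ord_5(q)=4$ the least routine --- both the order identity (so that $S\le\bN_G(\bT)$) and that $\bC_\bG(A)=\bT$, the latter needing the exact position of $A=\Syl_r(\bT^F)$ inside $\bT$ relative to the root datum, not merely $|A|$. Once these are settled, the remaining work is linear algebra over $\ZZ/r^a\ZZ$ with the Weyl-group action, as above.
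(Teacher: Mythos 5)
Your route is genuinely different from the paper's: the paper disposes of almost all cases by quoting the known structure of the Sylow $r$-normalizers of exceptional groups at bad primes from \cite[Tab.~1]{MN}, and handles the one remaining configuration $E_7(q)$, $r=3$, by placing $S$ inside a Levi subgroup of type $E_6$ or $\tw2E_6$ and invoking the $E_6$ case. You instead propose to re-derive this information from scratch: put $S$ into $\bN_G(\bT)$ for a maximal torus $\bT$ extending a Sylow $\Phi_d$-torus, show $\bC_G(A)=\bT^F$ for $A=S\cap\bT^F$, and finish by computing fixed points of the Weyl image $P$ on $\bT^F$. That is a legitimate strategy (it is close in spirit to how such Sylow normalizers are determined in the first place), but as written it is an outline whose decisive steps are only asserted, and one of them has a real gap rather than a mere deferral.

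The gap is in the step $\bC_\bG(A)=\bT$. Your general tool --- $W$ acts faithfully on $X^*(\bT)/r$ (Minkowski) --- only controls the situation when $A$ contains the full $r$-torsion $\bT[r]$, i.e.\ essentially the split, full-rank cases. In the twisted cases ($E_6(q)$ with $q\equiv2\pmod 3$, $\tw2E_6(q)$ with $q\equiv1\pmod3$, $\tw3D_4(q)$, $\tw2F_4(q^2)$, and $E_8(q)$ with $r=5$, $\ord_5(q)=4$), and whenever $(q\mp1)_r$ is as small as $r$, the group $A$ is a proper subgroup of $\bT[r]$ of strictly smaller rank (e.g.\ rank $4$ inside the rank-$6$ torus of $E_6$, rank $4$ inside the rank-$8$ torus of $E_8$ at $r=5$). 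Then $\bC_\bG(A)$ can a priori be disconnected: one must exclude not only roots vanishing on $A$ (your norm estimates address exactly this) but also nontrivial Weyl-group elements acting trivially on $A$, and faithfulness on $X^*/rX^*$ says nothing about faithfulness on such a subspace. Nothing in your sketch handles this, and it is precisely the delicate point. Beyond that, the two remaining pillars are stated but not proved: the order identity $|G|_r=|\bT^F|_r\,|\bN_G(\bT)/\bT^F|_r$ at bad primes lies outside the generic Sylow theory of \cite{BM} and must be verified group by group together with the structure of $\bN_G(\bT)/\bT^F$ --- but doing that is essentially reproducing the content of \cite[Tab.~1]{MN} that the paper cites; and the final evaluations $(\bT^P)^F=\bZ(S)$, respectively $\bZ(S)T$ for $E_7(q)$, $r=3$, are announced as ``a direct computation'' without being carried out. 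Finally, the statement as given also covers $r=p$ (e.g.\ $G_2(3^f)$, $\tw2G_2(3^{2f+1})$, $E_8(5^f)$), which you dismiss with ``we may assume $r\ne p$'' and no argument; that case is standard (the centralizer of a Sylow $p$-subgroup is $\bZ(G)\bZ(S)$), but it should be said. In short: sensible plan, but the parts you defer are the proof, and the one uniform argument you do offer for the key centralizer claim does not suffice in the non-full-rank cases.
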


\begin{proof}
By assumption we have $r=3$, or $r=5$ when $\bG$ is of type $E_8$. For
almost all cases, the normalizer of a Sylow $r$-subgroup of $G$ is given
in \cite[Tab.~1]{MN}, and the claim follows. The only remaining case is when
$G=E_7(q)$ with $r=3$. Here, $S$ is contained in a Levi subgroup of type
$E_6$ or $\tw2E_6$, and the claim follows from the case of $E_6$ discussed
before.
\end{proof}

In what follows, we consider the cyclotomic polynomials, over $\QQ(\sqrt{p})$
in the case of Suzuki-Ree groups (see \cite[\S F]{BM}) and over $\QQ$
otherwise, that occur in the generic order of $G$.

\begin{prop}   \label{prop:weyl}
 Let $\bG$ be simple such that $G=\bG^F$ is of exceptional type. Let $r\ne2$
 be a good prime for $\bG$ which divides two distinct cyclotomic polynomials
 $\Phi_d(q)$ occurring in the generic order of $G$. Then the following
 holds for a Sylow $r$-subgroup of $G$:
 \begin{enumerate}
  \item[\rm(a)] $G=E_6(q)$, $r=5$, $|\bC_G(S)|$ divides $|\bZ(S)|q(q-1)^2$;
  \item[\rm(b)] $G=\tw2E_6(q)$, $r=5$, $|\bC_G(S)|$ divides $|\bZ(S)|q(q+1)^2$;
  \item[\rm(c)] $G=E_7(q)$, $r=5$, $|\bC_G(S)|$ divides $|\bZ(S)|q(q\pm1)^3$; or
  \item[\rm(d)] $G=E_8(q)$, $r=7$, $|\bC_G(S)|$ divides $|\bZ(S)|q(q\pm1)^2$.
 \end{enumerate}
\end{prop}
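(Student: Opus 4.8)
The plan is to argue in two stages: first a divisibility argument reducing the hypothesis to the four families in (a)--(d), and then, in each of these, realizing a Sylow $r$-subgroup $S$ inside the normalizer of a Sylow $\Phi_e$-torus and computing $\bC_G(S)$ there.

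\emph{Reduction to the four cases.} Put $e:=\ord_r(q)$, reading $q$ and the cyclotomic polynomials over $\QQ(\sqrt p)$ in the Suzuki--Ree case. If $r$ divides two distinct cyclotomic factors $\Phi_{d_1}(q),\Phi_{d_2}(q)$ of $|G|$ with $d_1<d_2$, then $d_1=e$ and $d_2=er^j$ for some $j\ge1$, so $d_2/d_1=r^j\ge r$; and since $r\ne2$ is good for $\bG$ we have $r\ge5$, with $r\ge7$ when $\bG$ has type $E_8$. Running through the sets of indices $d$ with $\Phi_d\mid|G|$ for each exceptional type --- $\{1,2,3,6\}$ for $G_2$, $\{1,2,3,6,12\}$ for $\tw3D_4$, $\{1,2,3,4,6,8,12\}$ for $F_4$, and the analogous short lists for $\tw2B_2$, $\tw2G_2$, $\tw2F_4$ --- one checks that no ratio of two members is a power of a prime $\ge5$, so there is nothing to prove in those cases; while among $E_6,\tw2E_6,E_7,E_8$ the only solutions are $(E_6,5)$ with $\{d_1,d_2\}=\{1,5\}$, $(\tw2E_6,5)$ with $\{2,10\}$, $(E_7,5)$ with $\{1,5\}$ or $\{2,10\}$, $(E_8,7)$ with $\{1,7\}$ or $\{2,14\}$, together with $(E_7,7)$ with $\{1,7\}$ or $\{2,14\}$ (which we carry along; it will fall under the bound in (c)). In every case $e\in\{1,2\}$, so $r\mid q-\eps$ with $\eps=1$ for $e=1$ and $\eps=-1$ for $e=2$, and the $\Phi_e$-rank of $\bG$ equals its rank $\ell\in\{6,6,7,8\}$.

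\emph{Locating $S$ and reducing the centralizer.} As the $\Phi_e$-rank equals $\ell$, a Sylow $\Phi_e$-torus $\bT$ is an $F$-stable maximal torus whose $r$-part $P:=(\bT^F)_r$ is $\cong(C_{(q-\eps)_r})^\ell$, with $\bN_G(\bT)/\bT^F\cong W:=W(\bG)$; here one uses that $w_0=-1$ in $W(E_7)$ and $W(E_8)$ (for the $\Phi_2$-tori there), and, for $\tw2E_6$, that the graph automorphism together with $w_0$ acts as $-1$, so that the $\Phi_2$-split maximal torus has relative Weyl group $W(E_6)$. Since $|W(E_6)|_5=|W(E_7)|_5=5$ and $|W(E_7)|_7=|W(E_8)|_7=7$, we get $|G|_r=(q-\eps)_r^\ell\cdot r$, so $\bN_G(\bT)$ already contains a Sylow $r$-subgroup of $G$, namely $S=P.\langle\dot w\rangle$ with $w\in W$ of order $r$ acting on $L:=X_*(\bT)$ through the reflection representation. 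No root is trivial on $P$, since its coefficients in a $\ZZ$-basis of $X^*(\bT)$ have absolute value $<r\le(q-\eps)_r$; hence $\bC_\bG(P)^\circ=\bT$, and as the reflection representation of $W(E_\ell)$ modulo $r$ is faithful for $r\ge5$ we conclude $\bC_\bG(P)=\bT$, whence $\bC_G(P)=\bT^F$ and $\bC_G(S)=\bC_{\bT^F}(\dot w)=(\bT^F)^w$. (When applicable, the structure of $\bN_G(S)$, hence of $\bC_G(S)$, can also be read off \cite[Tab.~1]{MN}.)

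\emph{Computing $(\bT^F)^w$, and the main obstacle.} The characteristic polynomial of $w$ on $L$ is $(x-1)^a\Phi_r(x)$ with $a:=\ell-(r-1)$, so $a=2$ for $(E_6,5)$ and $(E_8,7)$, $a=3$ for $(E_7,5)$, and $a=1$ for $(E_7,7)$. Taking $\langle w\rangle$-cohomology of $0\to L\xrightarrow{\,q-\eps\,}L\to\bT^F\to0$ presents $(\bT^F)^w$ as an extension of a subgroup of $H^1(\langle w\rangle,L)$ --- whose order divides a power of $r$ (in fact a divisor of $\Phi_r(1)=r$) --- by $L^w/(q-\eps)L^w\cong(C_{q-\eps})^a$. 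So $|(\bT^F)^w|$ divides $(q-\eps)^a$ times a power of $r$; and that $r$-part is precisely $P^w$, which equals $\bZ(S)$, because $w$ acts nontrivially on the abelian group $P$ and hence $\bC_S(P)=P$. Therefore $|\bC_G(S)|$ divides $|\bZ(S)|(q-\eps)^a$, a fortiori $|\bZ(S)|\,q\,(q-\eps)^a$ --- exactly the bounds in (a)--(d), the case $(E_7,7)$ giving $a=1<3$ and thus subsumed under (c). The main work lies in this last step: one must check that $H^1(\langle w\rangle,L)$ is an $r$-group --- equivalently, that over $\ZZ[1/r]$ the $\langle w\rangle$-lattice $L$ splits as the direct sum of its $\Phi_1$- and $\Phi_r$-parts --- so that the correction is absorbed into $\bZ(S)$ and does not enlarge the $r'$-part of $\bC_G(S)$ beyond $(q-\eps)^a$; closely tied to it is the correct identification, in $\tw2E_6(q)$, of the $\Phi_2$-split maximal torus and of its relative Weyl group $W(E_6)$, noting that the extra factor $r=5$ in $|\tw2E_6(q)|_5$ comes from $\Phi_{10}$, consistently with $5\mid|W(E_6)|$. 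The remaining ingredients --- the lists of indices $d$, the $r$-parts of the Weyl group orders, the bound on the root coefficients, and the faithfulness modulo $r\ge5$ of the reflection representation of $W(E_\ell)$ --- are routine.
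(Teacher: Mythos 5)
Your argument is correct in substance, but it takes a genuinely different route from the paper. The paper quotes \cite[Cor.~3.13]{BM} to get $r\mid |W|$, restricts (using goodness) to $r=5$ in types $E_6,\tw2E_6,E_7$ and $r=7$ in $E_8$, and then bounds $\bC_G(S)$ by placing a Sylow $r$-subgroup inside a Levi subgroup of type $A_4$ (resp.\ $A_6$) whose centralizer is a Levi of type $A_1$ (resp.\ $A_2$, $A_1$), reading the claim off from that centralizer. You instead realize $S$ inside $\bN_G(\bT)$ for a Sylow $\Phi_e$-torus $\bT$ of order $(q-\eps)^\ell$, prove $\bC_\bG(P)=\bT$ for $P=(\bT^F)_r$, and compute $\bC_G(S)=(\bT^F)^w$ by a fixed-point lattice argument. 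This is more uniform and self-contained, yields the slightly sharper bound $|\bC_G(S)|$ divides $|\bZ(S)|(q-\eps)^{\ell-r+1}$ (no factor $q$ needed), and correctly detects that $(E_7,r=7)$ also satisfies the hypothesis, a case the paper's statement and proof overlook (harmlessly, since the bound in (c) still holds there with exponent $1$, as you note).

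Two caveats. First, the step you single out as ``the main work'' is not an obstacle: $H^1(\langle w\rangle,L)$ is annihilated by $|w|=r$ for any $\langle w\rangle$-module, hence is a finite $r$-group, so the $r'$-part of $(\bT^F)^w$ automatically divides $|L^w/(q-\eps)L^w|=(q-\eps)^a$ and is absorbed as you intend; state this rather than leaving it open, since as written your proof declares its key step unverified. Second, your elimination of the non-$E$ types rests on the assertion that $r\ne2$ good implies $r\ge5$, which fails for type $D_4$: $r=3$ is good there, and for $\tw3D_4(q)$ it divides two cyclotomic factors of the generic order ($\Phi_1\Phi_3$ or $\Phi_2\Phi_6$), the ratios being powers of $3$, so your ``no ratio is a power of a prime $\ge5$'' check does not dispose of it. The proposition's conclusion does not list this case either; the paper channels $(\tw3D_4,3)$ through Proposition~\ref{prop:bad} (which names it explicitly), so this is a blind spot of the statement shared with the paper, but your write-up should either exclude $(\tw3D_4,3)$ explicitly as being covered there, or handle it (your torus argument, or \cite[Tab.~1]{MN}, does so easily).
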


\begin{proof}
Note that the assumptions on $r$ already exclude the Suzuki-Ree groups.
Now, if $r$ divides two distinct cyclotomic polynomials occurring in the
generic order of $G$, then $r$ divides the order of the Weyl group of $G$,
see \cite[Cor.~3.13]{BM}. Since $r$
was also assumed to be good, the only possibilities are that $r=5$ and
$\bG$ is of type $E_6$ or $E_7$, or $r=7$ and $\bG$ is of type $E_8$. \par
First assume that $G=E_6(q)$. If $q\not\equiv1\pmod5$, then the Sylow
$5$-subgroups of $G$ are cyclic. Else, a Sylow 5-subgroup of $G$ is contained
in a Levi subgroup of type $A_4$, with centralizer of type $A_1$, whence
the claim in this case. Entirely similar arguments apply for $\tw2E_6(q)$. If
$G=E_7(q)$ with $r=5$, then our assumption gives $q\equiv\pm1\pmod5$ and
again a Sylow 5-subgroup of $G$ lies inside a Levi subgroup of type $A_4$,
centralized by a Levi subgroup of type $A_2$. Finally, in $E_8(q)$, a Sylow
7-subgroup $S$ does not lie in maximal tori if and only if $q\equiv\pm1\pmod7$.
In that case, $S$ is contained in a Levi subgroup of type $A_6$, with
centralizer of type $A_1$.
\end{proof}

Next, for Suzuki-Ree groups let $\Phi_{d_1}$ be the cyclotomic polynomial
occurring in the generic order of $G$ such that $r|\Phi_{d_1}(q)$ as in
\cite[App.~2]{BM}, and similarly $\Phi_{d_2}$ for $s$. For other groups,
let $d_1$ and $d_2$ be the order of $q$ modulo $r$, respectively $s$.

\begin{prop}   \label{prop:good}
 Assume that the Sylow $d_1$-tori of $\bG$ are maximal tori and that $s$
 divides a unique cyclotomic factor in the generic order of $G$. If every
 $s$-element of $G$ centralizes a Sylow $r$-subgroup of $G$,
 then $d_1=d_2$.
\end{prop}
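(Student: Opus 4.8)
The plan is to route the whole argument through a single nontrivial $s$-element $z$ and its centralizer, using that in a simply connected group such a centralizer is connected reductive of full rank. Since $s$ divides $|S|=|G/\bZ(G)|$, pick a nontrivial $s$-element $z\in G$. By hypothesis $z$ centralizes some Sylow $r$-subgroup of $G$, so $r\nmid|G:\bC_G(z)|$ and hence $\bC_G(z)=\bC_\bG(z)^F$ contains a full Sylow $r$-subgroup of $G$; since $\bG$ is simply connected and $z$ is semisimple, $\bC:=\bC_\bG(z)$ is connected reductive of maximal rank $n:=\operatorname{rank}(\bG)$, and $|\bC^F|_r=|G|_r$. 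As the order polynomial of $\bC^F$ divides that of $G$ (a standard fact for centralizers of semisimple elements), the multiplicity of $\Phi_{d_1}$ in $|\bC^F|$ is at most its multiplicity $a_{d_1}$ in $|G|$; and since (with $d_1=\ord_r(q)$) the cyclotomic factors of the generic order of $G$ divisible by $r$ are exactly those of the form $\Phi_{d_1r^j}(q)$, comparing $r$-parts in $|\bC^F|_r=|G|_r$ forces the multiplicity of $\Phi_{d_1}$ in $|\bC^F|$ to be exactly $a_{d_1}$.

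Next I use that the Sylow $d_1$-tori of $\bG$ are maximal tori, i.e.\ $a_{d_1}\varphi(d_1)=n$. Then a Sylow $\Phi_{d_1}$-torus of $\bC$ has dimension $a_{d_1}\varphi(d_1)=n=\operatorname{rank}(\bC)$, so it is a maximal torus $\bT$ of $\bC$ — hence also of $\bG$ — which is a $\Phi_{d_1}$-torus. The crucial point is that $z$, being semisimple and central in $\bC$, lies in \emph{every} maximal torus of $\bC$, in particular $z\in\bT$. Therefore $s$ divides $|\bT^F|=\Phi_{d_1}(q)^{a_{d_1}}$, so $s\mid\Phi_{d_1}(q)$.

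Since $s$ is prime, $s\mid\Phi_{d_1}(q)$ forces $d_1=d_2$ — which is the assertion — or else $d_1=s^kd_2$ for some $k\ge1$. In the latter case $\Phi_{d_1}$ occurs in the generic order of $G$ (with multiplicity $a_{d_1}\ge1$, as $\bT$ is a maximal torus of $\bG$), so some fundamental degree of $\bG$ is divisible by $d_1$ and hence by $d_2$; thus $\Phi_{d_2}$ also occurs there, and both $\Phi_{d_2}(q)$ and $\Phi_{d_1}(q)$ are divisible by $s$, contradicting the hypothesis that $s$ divides a unique cyclotomic factor of the generic order of $G$. Hence $d_1=d_2$.

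The step I expect to need most care is the transition in the first paragraph: one must be sure that passing from $\bG$ to the centralizer $\bC_\bG(z)$ neither loses nor creates $\Phi_{d_1}$-torus dimension, which rests on divisibility of order polynomials together with the precise description of which cyclotomic polynomials $r$ divides; once $z$ has been located inside a $\Phi_{d_1}$-torus everything else is formal.
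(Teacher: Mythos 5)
Your proposal is correct in substance, and it ends exactly where the paper's proof does (with $s\mid\Phi_{d_1}(q)$, then the uniqueness hypothesis), but the way you get there is genuinely different. The paper places the $s$-element directly: by \cite[Cor.~3.13]{BM} it lies in a maximal torus containing a Sylow $d_1$-torus of $\bG$, which by the maximality hypothesis \emph{is} that torus. You instead work inside $\bC:=\bC_\bG(z)$, using Steinberg's connectedness theorem (here the simply connected hypothesis is essential, as you note), the equality $|\bC^F|_r=|G|_r$, and a comparison of $\Phi_{d_1}$-multiplicities to force the Sylow $\Phi_{d_1}$-torus of $\bC$ to be a maximal torus of $\bG$, which contains $z$ because $z\in\bZ(\bC)$ and the centre of a connected reductive group lies in every maximal torus. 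This is a more self-contained derivation of the key placement step, at the price of the unreferenced ``order polynomial of $\bC^F$ divides that of $G$'' claim; that claim is true, but you do not even need it as a black box: any $\Phi_d$-torus of $\bC$ is a $\Phi_d$-torus of $\bG$, hence lies in a Sylow $\Phi_d$-torus of $\bG$ by the generic Sylow theory of \cite{BM}, which already gives the bound on the $\Phi_{d_1r^j}$-multiplicities of the generic order of $\bC$ that your $r$-part comparison requires.

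Two caveats. First, your cyclotomic bookkeeping ($d_1=\ord_r(q)$, the $r$-part of the generic order being supported on the $\Phi_{d_1r^j}$, and the fundamental-degree argument at the end) is written for the untwisted rational setting, whereas the setup preceding the proposition also covers twisted and Suzuki--Ree groups, where one works with generalized cyclotomic polynomials over $\QQ(\sqrt{p})$; the theory of \cite{BM} is uniform, so the argument adapts, but as written your steps do not literally cover those cases. Second, in your last paragraph the implication ``$\Phi_{d_1}$ occurs in the generic order and $d_2\mid d_1$, hence some degree is divisible by $d_2$, hence $\Phi_{d_2}$ occurs'' is valid only for split groups: for instance $\Phi_{12}$ occurs in the generic order of $\tw3D_4(q)$ while $\Phi_4$ does not. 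In the situations where the proposition is applied the subcase $d_1=s^kd_2$ with $k\ge1$ can still be excluded (the hypothesis $a_{d_1}\varphi(d_1)=\operatorname{rank}\bG$ forces $\varphi(d_2)$ to be very small, so $\Phi_{d_2}$ is $\Phi_1$ or $\Phi_2$ and does occur, contradicting uniqueness), and the paper's own one-line conclusion is equally terse here, so this is a repairable slip rather than a fatal gap.
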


\begin{proof}
Let $g$ be an $s$-element such that $\bC_G(g)$ contains a Sylow
$r$-subgroup. Then by \cite[Cor.~3.13]{BM},
$g$ lies in a maximal torus $\bT$ containing a Sylow
$d_1$-torus $\bT_1$ of $\bG$. Since $\bT_1$ is a maximal torus by assumption,
$\bT = \bT_1$. So $s$ divides $\Phi_{d_1}(q)$ which by our
assumption implies that $d_2=d_1$.
\end{proof}

\begin{prop}   \label{prop:d1=d2}
 Assume that $d_1=d_2$ and neither of $r,s$ divides the order of
 the Weyl group of $\bG$. Then there exists an abelian $\{r,s\}$-Hall
 subgroup of $G$.
\end{prop}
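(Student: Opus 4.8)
The plan is to exhibit an abelian Hall $\{r,s\}$-subgroup of $G$ inside the fixed-point group of a single Sylow $\Phi_d$-torus, where $d:=d_1=d_2$. Concretely, I would fix an $F$-stable Sylow $\Phi_d$-torus $\bT_d$ of $\bG$, recall that $|\bT_d^F|=\Phi_d(q)^a$, where $a$ is the multiplicity of $\Phi_d$ in the generic order of $G$ (this interpretation, and that of $q$ itself, being uniform across all exceptional types, including the Suzuki and Ree groups, in the framework of \cite[\S F, App.~2]{BM}), and then show that $\bT_d^F$ already contains full Sylow $r$- and $s$-subgroups of $G$.

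The key input is \cite[Cor.~3.13]{BM}: since $r\nmid|W(\bG)|$, the prime $r$ divides at most one of the cyclotomic factors $\Phi_e$ occurring in the generic order of $G$; as $r$ divides $|S|$ and $r\ne p$, it divides exactly one such factor, and since $r\mid\Phi_{d_1}(q)$ that factor is $\Phi_d$. Applying the same reasoning to $s$ (using $s\nmid|W(\bG)|$, $s\ne p$, and $d_1=d_2=d$), the prime $s$ likewise divides exactly the one factor $\Phi_d$. Consequently $|G|_r=(\Phi_d(q))_r^{\,a}=|\bT_d^F|_r$ and $|G|_s=(\Phi_d(q))_s^{\,a}=|\bT_d^F|_s$. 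Thus the (unique) Sylow $r$-subgroup $R$ and Sylow $s$-subgroup $S$ of the abelian group $\bT_d^F$ are actually Sylow subgroups of $G$, and $R\times S\le\bT_d^F$ is an abelian subgroup of order $|G|_r|G|_s=|G|_{\{r,s\}}$, i.e.\ an abelian Hall $\{r,s\}$-subgroup of $G$.

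I do not expect a serious obstacle here: once \cite[Cor.~3.13]{BM} is invoked, the argument reduces to bookkeeping with the torus order $|\bT_d^F|=\Phi_d(q)^a$ and with the elementary fact that the $\{r,s\}$-elements of a finite abelian group form a subgroup. The only points that require a little care are that $r,s\ne p$ (so that $r$ and $s$ genuinely divide a cyclotomic factor rather than the $p$-part of $|G|$) and that $\Phi_d$ and $q$ must be read in the generalized sense of \cite{BM} when $G$ is a Suzuki or Ree group; both are handled by working inside the setup of \cite{BM} from the start, so no separate case analysis for the twisted series is needed.
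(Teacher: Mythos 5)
Your proposal is correct and follows essentially the same route as the paper: both invoke \cite[Cor.~3.13]{BM} to conclude that, since neither $r$ nor $s$ divides $|W(\bG)|$ and $d_1=d_2=d$, the fixed points $\bT_d^F$ of a Sylow $d$-torus contain full Sylow $r$- and $s$-subgroups of $G$, whose product is then an abelian Hall $\{r,s\}$-subgroup. Your extra bookkeeping with $|\bT_d^F|=\Phi_d(q)^a$ just makes explicit what the cited corollary provides.
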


\begin{proof}
Since neither $r$ nor $s$ divides the order of the Weyl group of $\bG$,
any Sylow $d$-torus $\bT_d$ of $\bG$, where $d=d_1=d_2$, has the property
that $\bT_d^F$ contains a Sylow $r$-subgroup $S_1$ and a Sylow $s$-subgroup
$S_2$ of $G$ by \cite[Cor.~3.13]{BM}. In particular, $S_1 \times S_2$
is an abelian Hall $\{r,s\}$-subgroup of $G$.
\end{proof}

Now we can complete the proof of Theorem \ref{simple1}:

\begin{thm}  \label{thm:proof2.1}
 The assertion of Theorem~\ref{simple1} holds for $S$ a simple exceptional
 group of Lie type.
\end{thm}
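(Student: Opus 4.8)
The plan is to dispose of the exceptional groups $S$ of Lie type by combining the structural results just established (Propositions~\ref{prop:bad}--\ref{prop:d1=d2}) with a case split according to whether the primes $r,s$ are good or bad for $\bG$, and whether each divides the order of the Weyl group $W$ of $\bG$. First I would reduce, via Lemma~\ref{center}(i), to the group $G=\bG^F$. Recall that the hypothesis of Theorem~\ref{simple1} is that every $r$-element of $G$ has $s'$-class size and every $s$-element has $r'$-class size; equivalently, $\bC_G(g)$ contains a full Sylow $s$-subgroup of $G$ for every $r$-element $g$, and symmetrically.

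\smallskip
The first case to treat is when one of $r,s$ --- say $r$ --- is a \emph{bad} prime for $\bG$ (or $r=3$ and $G={}^3D_4(q)$). Taking an $r$-element $g$ equal to a generator issue aside, we apply the hypothesis to $g$ running over $\bZ(S)$ for $S\in\Syl_r(G)$: since $S$ centralizes itself, and more to the point a Sylow $s$-subgroup $Q$ must lie in $\bC_G(z)$ for each $z\in\bZ(S)$, we get $Q\le\bigcap_{z\in\bZ(S)}\bC_G(z)$. Using Proposition~\ref{prop:bad}, $\bC_G(S)=\bZ(G)\bZ(S)$ in all cases except $G=E_7(q)$, $r=3$ (where $\bC_G(S)=\bZ(S)T$ with $|T|=q-\eps$); in either case $\bC_G(S)$ is an $r$-group times a torus whose order is easy to control, so $s$ cannot divide $|\bC_G(S)|$ unless $s$ divides $q\mp1$ in the $E_7$ exception --- and then a short direct check of centralizers of $3$-elements rules it out or produces the abelian Hall subgroup inside a torus. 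Hence no bad prime can occur, and we may assume $r,s$ are both good.

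\smallskip
Next, with $r,s$ both good, split on whether $r$ divides two distinct cyclotomic factors $\Phi_d(q)$ of the generic order of $G$. If it does, Proposition~\ref{prop:weyl} pins down $(G,r)$ to one of the four listed cases and bounds $|\bC_G(S)|$ by $|\bZ(S)|$ times a small polynomial in $q$; since $s$ is a good prime different from $p$ dividing $|S|$ but $s$ must divide this bound, one checks case by case (as in the proof of Proposition~\ref{prop:weyl}) that this forces $s$ into the same Levi/torus, and then the commuting Sylow $r$- and $s$-subgroups both sit inside an abelian maximal torus or a product of small classical groups, giving the abelian Hall $\{r,s\}$-subgroup directly. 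Otherwise $r$ divides a unique cyclotomic factor, i.e. the Sylow $d_1$-torus of $\bG$ is a maximal torus; symmetrically assume the same for $s$. Then Proposition~\ref{prop:good} (applied with the roles of $r,s$ interchanged as needed) yields $d_1=d_2=:d$. If moreover neither $r$ nor $s$ divides $|W|$, Proposition~\ref{prop:d1=d2} finishes: the Sylow $d$-torus contains both Sylow subgroups and $S_1\times S_2$ is the desired abelian Hall subgroup. The remaining sliver is when $d_1=d_2$ but one of $r,s$ divides $|W|$ while still dividing only one cyclotomic factor; here the possibilities for $(G,r)$ are very limited (a good prime dividing $|W|$ of an exceptional group: $r\in\{3,5,7\}$ with $G$ of type $F_4,E_6,{}^2E_6,E_7,E_8$), and one handles them by locating $S$ inside the appropriate Levi subgroup and invoking the already-proved classical cases (Propositions~\ref{sl}, \ref{su}, \ref{clas}) or a direct centralizer computation.

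\smallskip
I expect the main obstacle to be the bookkeeping in the ``good prime, divides $|W|$ but a unique cyclotomic factor'' subcase, where one cannot simply quote Proposition~\ref{prop:d1=d2} and instead must argue that the Sylow $r$- and $s$-subgroups, forced to commute, actually land in a common abelian maximal torus or a small Levi subgroup whose Hall $\{r,s\}$-subgroup is known to be abelian. This requires knowing precisely the embeddings of Sylow subgroups of exceptional groups into Levi subgroups (as tabulated in \cite{MN}), and checking that the centralizer structure there does not allow a non-abelian Hall subgroup --- essentially a finite, if tedious, verification rather than a conceptual difficulty.
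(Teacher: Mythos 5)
Your overall skeleton matches the paper's (reduce to $G=\bG^F$; kill bad primes via Proposition~\ref{prop:bad}; treat primes dividing two cyclotomic factors via Proposition~\ref{prop:weyl}; then use Propositions~\ref{prop:good} and~\ref{prop:d1=d2}), but two of your steps would fail as written. First, your reduction ``$r$ divides a unique cyclotomic factor, i.e.\ the Sylow $d_1$-torus of $\bG$ is a maximal torus'' is a false equivalence: for instance in $E_6(q)$ a prime $r$ with $\ord_r(q)=4$ divides only $\Phi_4(q)$ in the generic order, yet the Sylow $\Phi_4$-torus has rank $2$ and is far from maximal. Proposition~\ref{prop:good} only gives $d_1=d_2$ under the maximality hypothesis; when neither Sylow $d_i$-torus is maximal and $d_1\neq d_2$ one is left with a genuine residual family of cases ($\tw3D_4(q)$ with $d_i\in\{1,2\}$, $E_6(q)$ with $d_i\in\{2,4,6\}$, $\tw2E_6(q)$ with $\{1,3,4\}$, $E_7(q)$ with $\{3,4,6\}$), which the paper disposes of by exhibiting explicit centralizers of $r$-elements, such as $(q-1).A_1(q^3)$ or $(q^2-1).\tw2D_4$, that fail to contain a Sylow $s$-subgroup. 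Your proposal skips this entire block of cases, and it is not a formality: it is where the concrete structure of these groups enters.

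Second, in the Proposition~\ref{prop:weyl} configurations (and in the $E_7(q)$, $r=3$, $s\mid(q-\eps)$ exception to the bad-prime case) you propose to ``produce the abelian Hall $\{r,s\}$-subgroup directly'' inside a torus or small Levi. That cannot work: in those configurations ($E_6^{\pm}$ or $E_7$ with $r=5$ and $q\equiv\pm1\pmod 5$, $E_8$ with $r=7$ and $q\equiv\pm1\pmod 7$, or $E_7$ with $r=3$) the Sylow $r$-subgroup is non-abelian, so no abelian Hall $\{r,s\}$-subgroup exists and the only admissible outcome is that the hypothesis of Theorem~\ref{simple1} is violated. The paper does this by first deducing $s\mid(q\pm1)$ (every $s$-element lies in $\bC_G(S)$ for some Sylow $r$-subgroup $S$, by the $r'$-class-size hypothesis on $s$-elements) and then exhibiting a specific $s$-element whose centralizer is a Levi of type $A_2^2A_1$, $A_3A_2A_1$, $E_6A_1$, or the center of a $D_6$-Levi, hence too small to contain a Sylow $r$-subgroup. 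Relatedly, in your bad-prime argument the step $Q\le\bigcap_{z\in\bZ(S)}\bC_G(z)$ does not follow from the hypothesis (each $z$ only yields \emph{some} Sylow $s$-subgroup in its centralizer); the correct direction is the one just described, using that every $s$-element centralizes a conjugate of $S$ and that $\bC_G(S)=\bZ(G)\bZ(S)$ has order prime to $s$. With these repairs your outline would converge to the paper's proof, but as it stands the contradiction arguments that carry the main burden are missing.
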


\begin{proof}
As before, there exists a simple, simply connected algebraic group $\bG$
with a Steinberg endomorphism $F$ such that $S=G/\bZ(G)$, where $G:=\bG^F$.
By Lemma \ref{center}(i), we may replace $S$ by $G$.

First assume that $r$ is a bad prime for $G$. Then Proposition~\ref{prop:bad}
yields the contradiction that $s \nmid |S|$, unless $G=E_7(q)$, $r=3$ and
$s|(q-\eps)$, where $q\equiv\eps\pmod3$.
But $E_7(q)$ has a Levi subgroup of type $D_6$, with center a torus of order
$q-\eps$, and $s$-elements (which have order at least~5) in that center
do not centralize a Sylow 3-subgroup of $G$. \par
Thus, both primes $r,s$ are good for $G$. Next assume that $r$, say,
satisfies the assumptions of Proposition~\ref{prop:weyl}. Then necessarily
$s$ divides $q\pm1$. In all of the cases (a)--(d) we will exhibit a Levi
subgroup occurring as the centralizer of an $s$-element of order dividing
$q\pm1$ but not containing a Sylow $r$-subgroup. For $G=E_6(q)$, take a Levi
of type $A_2^2A_1$ (note that here $s\ge7$), and similarly for $\tw2E_6(q)$.
For $E_7(q)$, we may take a Levi of type $A_3A_2A_1$, and for $E_8(q)$ with
$r=7$ a Levi of type $E_6A_1$ will do. \par
We may now assume that each of $r$ and $s$ divides a unique cyclotomic
polynomial $\Phi_{d_i}(q)$ occurring in the generic order of $G$, but
does not divide the order of the Weyl group of $\bG$.
If $d_1 = d_2$, we are done by Proposition \ref{prop:d1=d2}.
Otherwise, by Proposition \ref{prop:good} we have that neither of the Sylow
$d_i$-tori are maximal tori of $\bG$. Then $d_1$ and $d_2$ are as in
Table~\ref{tab:exc}. In these cases, the last column of the table gives
certain centralizers of $r$-elements which do not contain a Sylow
$s$-subgroup, a contradiction.
\end{proof}

\begin{table}[htbp]
 \caption{Non-maximal non-cyclic Sylow tori}
  \label{tab:exc}
\[\begin{array}{|cll|}
\hline
 G& d_i\\
\hline \hline
 \tw3D_4(q)& 1,2& (q-1).A_1(q^3)\\
     E_6(q)& 2,4,6& (q^2-1).\tw2D_4,(q^2+1)(q-1).\tw2A_3(q)\\
 \tw2E_6(q)& 1,3,4& (q^2-1).\tw2D_4,(q^2+1)(q+1).A_3(q)\\
     E_7(q)& 3,4,6& (q^3\pm1).\tw3D_4(q)\\
\hline
\end{array}\]
\end{table}

%%%%%%%%%%%%%%%%%%%%%%%%%%%%%%%%%%%%%%%%%%%%%%%%%%%%%%%%%%%%%%%%%%%%%%%%%
\section{Proof of Theorems A, B and C}

We start with a trivial observation.

\begin{lem}   \label{obs}
 Suppose that $N \nor G$, and let $p$ and $q$ be primes.
 If all the $p$-elements of $G$ have conjugacy class size not divisible by $q$,
 then the same happens in $G/N$ and $N$.
\end{lem}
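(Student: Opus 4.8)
The statement asserts a downward and quotient inheritance of the hypothesis ``$q \nmid |x^G|$ for every $p$-element $x$''. I would prove the two parts separately, using only elementary facts about conjugacy classes in normal subgroups and quotients. For the quotient $G/N$: take a $p$-element $\bar g = gN \in G/N$. By standard theory we may choose the representative $g$ to be a $p$-element of $G$ itself (lift $\bar g$ to any preimage, take its $p$-part; since $gN$ has $p$-power order, the $p$-part still maps onto $\bar g$). Then $\cent{G}{g}N/N \leq \cent{G/N}{\bar g}$, so $|(\bar g)^{G/N}| = |G/N : \cent{G/N}{\bar g}|$ divides $|G/N : \cent{G}{g}N/N| = |G : \cent{G}{g}N|$, which in turn divides $|G : \cent{G}{g}| = |g^G|$. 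Since the latter is coprime to $q$ by hypothesis, so is $|(\bar g)^{G/N}|$.

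For the normal subgroup $N$: let $x \in N$ be a $p$-element. Here $x$ is already a $p$-element of $G$, so $q \nmid |x^G| = |G : \cent{G}{x}|$. Now $x^N \subseteq x^G \cap N$ and $\cent{N}{x} = \cent{G}{x} \cap N$, so
\[
  |x^N| = |N : \cent{N}{x}| = |N : \cent{G}{x} \cap N| = |\cent{G}{x}N : \cent{G}{x}|,
\]
which divides $|G : \cent{G}{x}| = |x^G|$. Hence $q \nmid |x^N|$, as required.

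The lemma is genuinely routine; there is no real obstacle. The only point requiring a word of care is the reduction, in the quotient case, to a $p$-element representative: one must observe that if $gN$ has $p$-power order then writing $g = g_p g_{p'}$ as product of commuting $p$- and $p'$-parts forces $g_{p'} \in N$, so $g_p N = gN$, and $g_p$ is a $p$-element of $G$. Everything else is the multiplicativity of indices along the subgroup chains $\cent{G}{x} \leq \cent{G}{x}N \leq G$ and the isomorphism $\cent{G}{x}N/N \cong \cent{G}{x}/(\cent{G}{x}\cap N)$. I would state it in one short paragraph without belaboring these standard identities.
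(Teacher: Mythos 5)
Your proof is correct and follows essentially the same route as the paper's: for $N$ you use that $|N:\cent Nx|$ divides $|G:\cent Gx|$, and for $G/N$ you pass to the $p$-part of a preimage and use $\cent Gy N/N \leq \cent{G/N}{Ny}$, exactly as in the paper (your explicit check that the $p'$-part of the preimage lies in $N$ is a detail the paper leaves implicit). Nothing further is needed.
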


\begin{proof}
If $x \in N$ is a $p$-element, then $|N:\cent Nx|$ divides $|G:\cent Gx|$,
which is not divisible by $q$, by hypothesis. If $Nx \in G/N$ is a $p$-element,
then $Nx=Ny$ where $y$ is the $p$-part of $x$. If $D/N=\cent{G/N}{Nx}$,
then $\cent Gy N/N \leq D/N$ and $|G:D|$ divides $|G:\cent Gy|$, which is not
divisible by $q$, by hypothesis.
\end{proof}

\begin{lem}   \label{lema}
 Let $q$ be a prime. Suppose that a $q$-group $Q$ acts coprimely on a finite
 group $N$. Let $p$ be a prime and let $P$ be a $Q$-invariant Sylow
 $p$-subgroup of $N$. Assume that for every $x \in Q$, there exists $n \in N$
 such that $[x, P^n]=1$. Then $[P,Q]=1$.
\end{lem}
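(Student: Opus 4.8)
The plan is to prove $[P,Q]=1$ by working inside the semidirect product $NQ$, using the coprimeness of $|Q|$ and $|N|$ throughout. First I would fix notation: let $H:=NQ$, so $Q\in\syl{q}{H}$ (using $\gcd(|N|,|Q|)=1$) and $P$ is a $Q$-invariant Sylow $p$-subgroup of $N\nor H$. The hypothesis says that for every $x\in Q$ there is $n\in N$ with $[x,P^n]=1$, equivalently $x^{n^{-1}}$ centralizes $P$, i.e. every element of $Q$ is $N$-conjugate into $\cent{H}{P}$. I would first handle the case $Q$ cyclic (or, more generally, reduce to it): if $Q=\langle x\rangle$, then $x\in \cent{H}{P}^n$ for some $n$, so $x^{n^{-1}}\in\cent H P$ is a $q$-element centralizing $P$; then $\langle x^{n^{-1}}\rangle$ and $P$ generate a subgroup in which $P$ is normal and centralized, and transferring back by the conjugation $n$ (which normalizes neither in general) one has to be slightly careful — the cleaner statement is that $[P, x^{n^{-1}}]=1$, hence $[P^n, x]=1$, but we want $[P,x]=1$. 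So the cyclic case already contains the essential difficulty.

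**The key mechanism.** The main idea I would use is a coprime-action fixed-point argument. Consider $C:=\cent{N}{P}$ and the factor group $\norm{N}{P}/C$, or better, work with the set $\Omega$ of $N$-conjugates of $P$ — equivalently $\Omega = N/\norm NP$ — on which $Q$ acts (since $P$ is $Q$-invariant, $Q$ permutes the $N$-conjugates of $P$, fixing $P$ itself). Because $|Q|$ and $|\Omega|$ (a divisor of $|N|$) are coprime, \emph{every} orbit of $Q$ on $\Omega$ has length prime to $q$; in particular, if some $Q$-invariant subset has $q$-power size it is a single point. The hypothesis, rephrased: for each $x\in Q$, the Sylow subgroup $P$ lies in the fixed-point set of $x$ acting on $\Omega$ — no wait, it says $x$ fixes \emph{some} point $P^n$ of $\Omega$, namely $x$ normalizes $P^n$ and in fact centralizes it. So each $x\in Q$ has a fixed point on $\Omega$. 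I would then invoke a theorem in the spirit of Gleason/Wielandt or more directly the following: a solvable-action style lemma showing that if $Q$ acts on $N$ coprimely and each element of $Q$ fixes a point of $\Omega=N/\norm NP$, then $Q$ itself fixes a point, i.e. some $N$-conjugate $P^n$ is $Q$-invariant; replacing $P$ by that conjugate (which is legitimate since we only need \emph{some} conjugate to be centralized and $P$ is determined only up to $N$-conjugacy in the statement's conclusion — actually no, the conclusion is about $P$ itself).

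**Reconciling with the actual conclusion.** Since the conclusion $[P,Q]=1$ is about the \emph{given} $P$, I would instead argue as follows. By the coprime action of $Q$ on $N$, Glauberman's lemma gives that $Q$ fixes a Sylow $p$-subgroup of $N$ in each $N$-class, and all $Q$-invariant Sylow $p$-subgroups of $N$ are $\cent NQ$-conjugate (this is the standard "Glauberman conjugacy" for coprime action). So fix the given $Q$-invariant $P$. Now consider $M:=\norm NP$; it is $Q$-invariant, $Q$ acts coprimely on $M$, and $P\nor M$ with $M/P\cdot C$... the relevant group to study is the coprime action of $Q$ on $\overline M := M/\cent MP$, which embeds in $\mathrm{Out}(P)$ or at least acts faithfully on $P$. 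The hypothesis becomes: for each $x\in Q$, $x$ acts on $P$ the same way some element of $M$ does (after the conjugation by $n$, and using $P^n$ vs $P$ one restricts attention to those $n\in M$... this needs the fixed-point lemma above to guarantee we may take $n\in M$, i.e. $n$ normalizing $P$). Then $Q\to\mathrm{Aut}(P)$ and $M\to\mathrm{Aut}(P)$ have images with the property that every element of $\mathrm{im}(Q)$ lies in $\mathrm{im}(M)$; combined with coprimeness and a theorem that a $q$-group all of whose elements act as elements of a $p'$-... no.

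**Where the real work is.** Honestly, the cleanest route I would pursue: induct on $|N|$. If $N$ has a proper nontrivial $Q$-invariant normal subgroup $L$, apply induction to $Q$ acting on $L$ (with $P\cap L\in\syl{p}{L}$, $Q$-invariant, and the hypothesis restricts — one must check $[x,(P\cap L)^\ell]=1$ for suitable $\ell\in L$, which follows by intersecting $[x,P^n]=1$ with $L$ and adjusting by a Frattini-type argument) to get $[P\cap L,Q]=1$, i.e. $P\cap L\le \cent NQ$; then pass to $\bar N:=N/L$ and apply induction there, noting $\bar P\bar Q$ and the hypothesis descend, to get $[\bar P,\bar Q]=1$; finally combine $[P\cap L,Q]=1$ and $[P,Q]\le L\cap P$ via coprime action ($[P,Q,Q]=[P,Q]$ so $[P,Q]\le[L\cap P,Q]=1$). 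So the inductive step is routine modulo the Frattini adjustments. The base case is $N$ characteristically simple under $Q$-action, or really $N$ a minimal $Q$-invariant situation — and here I expect the \textbf{main obstacle}: when $N$ has no proper nontrivial $Q$-invariant normal subgroup, $N$ is a direct product of isomorphic simple groups permuted by $Q$. If $N$ is a $q'$-group this is where one uses the classification-free fact that the hypothesis forces $P$ (which is all of $N$ when $p$ divides $|N|$ and $N$ is $p$-solvable, but in general is a genuine Sylow) to be centralized — and I suspect the actual proof in the paper reduces to $N$ simple, $Q$ acting on it, and then argues via the structure of $\mathrm{Out}$ of simple groups or via a direct counting/Frattini argument that $Q$ fixing every point "elementwise" of $N/\norm NP$ forces $[P,Q]=1$. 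So: the plan is \emph{induction on $|N|$ with the characteristically-$Q$-simple case as the crux}, and the hard part is that base case, which likely needs either the Schur--Zassenhaus/Glauberman machinery pushed to its limit or a short appeal to properties of automorphisms of (products of) simple groups; I would first try to dispose of it purely group-theoretically using that $\cent NQ\ne1$ (coprime action on a nontrivial group) and a transfer/focal-subgroup argument on $P$, and only if that fails bring in the structure of simple groups.
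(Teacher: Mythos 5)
You correctly isolate the essential difficulty (from $[x,P^n]=1$ one gets $[P^n,x]=1$, not $[P,x]=1$), and you even name the tool that resolves it — Glauberman's conjugacy statement that all $Q$-invariant Sylow $p$-subgroups of $N$ are $\cent NQ$-conjugate — but you never combine the two, and instead abandon that route. The point you are missing is that in the cyclic case $Q=\langle x\rangle$ the hypothesis already finishes the proof: $[x,P^n]=1$ means $P^n$ is a $Q$-invariant Sylow $p$-subgroup of $N$ with $P^n\le\cent NQ$; since $P$ is also $Q$-invariant, Glauberman gives $c\in\cent NQ$ with $P=(P^n)^c$, and conjugation by $c$ fixes $\cent NQ$, so $P\le\cent NQ$, i.e. $[P,Q]=1$. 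The reduction to cyclic $Q$, which you announce but do not supply, is equally short and is exactly what the paper does: induct on $|Q|$; if $Q$ has two distinct maximal subgroups $R$ and $S$, then by induction $[R,P]=[S,P]=1$ and $RS=Q$ (maximal subgroups of a $q$-group have index $q$ and are normal), so $[Q,P]=1$; otherwise $Q/\Phi(Q)$ is cyclic, hence $Q$ is cyclic. Note the induction is on $|Q|$, not on $|N|$.

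Your fallback plan — induction on $|N|$ through a $Q$-invariant normal subgroup $L$, with the characteristically $Q$-simple case as the base — has a workable inductive step (the hypothesis does restrict to $L$ after adjusting the conjugating element inside $L$, and passes to $N/L$, and $[P,Q]=[P,Q,Q]\le[P\cap L,Q]$ by coprimeness), but the base case, where $N$ is a direct product of nonabelian simple groups permuted by $Q$, is left completely unproved; you yourself flag it as the main obstacle and only speculate about transfer/focal arguments or properties of automorphisms of simple groups. Since that case is not handled, the proposal as written is not a proof; moreover, no such heavy input is needed — the lemma is an elementary consequence of Glauberman's lemma as above.
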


\begin{proof}
We argue by induction on $|Q|$.
Suppose that $R$ that is a maximal subgroup of $Q$. By induction,
we have that $[R,P]=1$. Suppose that $S$ is another maximal subgroup of $Q$.
Then $[S,P]=1$, and therefore $[Q,P]=1$ since $RS=Q$. Hence,
we conclude that $Q$ has a unique maximal subgroup. Then $Q/\Phi(Q)$ is cyclic,
and therefore $Q=\langle x\rangle$. Now, by hypothesis, there is $n \in N$
such that $[Q,P^n]=1$. In particular, $P^n$ is $Q$-invariant. Since $P$
is $Q$-invariant, by Glauberman's Lemma, there is $c\in \cent NQ$ such that
$P=(P^n)^c \leq \cent NQ$, as desired.
\end{proof}

We will use the following consequence in several places below.

\begin{cor}   \label{cor}
 Suppose that $G$ is a finite group, and let $p,q$ be different primes.
 Assume that every $q$-element of $G$ has conjugacy class of size not divisible
 by $p$. Suppose that $N \nor G$ is a $q'$-group. If $Q \in \syl qG$ and
 $P \in \syl pN$ is $Q$-invariant, then $[Q,P]=1$.
\end{cor}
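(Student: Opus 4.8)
The plan is to deduce Corollary~\ref{cor} from Lemma~\ref{lema} by verifying, for each element $x \in Q$, the local hypothesis ``there exists $n \in N$ with $[x,P^n]=1$''. First I would fix $x \in Q$ and argue that $P$ is contained in some Sylow $p$-subgroup of the subgroup $\langle x \rangle N$; indeed, since $N \nor G$ is a $q'$-group and $x$ is a $q$-element, $P \in \syl pN = \syl p{\langle x \rangle N}$ when $p \neq q$. Then I would apply a Sylow-type argument inside the (typically non-normal but structurally tame) setup: the point is to locate a Sylow $p$-subgroup of $\langle x \rangle N$ that is normalized by $x$, or better, to land inside $\cent {}x$.

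The clean way to extract the needed commuting conjugate is to use the class-size hypothesis. Consider $x \in Q$, a $q$-element of $G$; by assumption $p \nmid |x^G| = |G:\cent Gx|$, so $\cent Gx$ contains a full Sylow $p$-subgroup $P_0$ of $G$. Since $N \nor G$, the group $P_0 \cap N$ is a Sylow $p$-subgroup of $N$ (as $|N|_p = |G|_p$ because $N$ is a $q'$-group forces nothing about $p$ --- here one should instead note $P_0 \cap N \in \syl pN$ only when $N$ contains a full Sylow $p$-subgroup, which holds precisely because $|G:N|$ could be divisible by $p$). To avoid this subtlety, I would instead work inside $M := O^{q'}(G)$ or simply restrict attention to $\langle x\rangle N$: there $P \in \syl p{\langle x\rangle N}$, and $x$ normalizes $\langle x\rangle N$, and $\cent {\langle x\rangle N}x \supseteq$ a Sylow $p$-subgroup of $\langle x\rangle N$ because $|(\langle x\rangle N : \cent {\langle x\rangle N}x)|$ divides $|x^G|$ and hence is coprime to $p$. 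So $\cent {\langle x\rangle N}x$ contains a Sylow $p$-subgroup $P_1$ of $\langle x\rangle N$; as $P, P_1 \in \syl p{\langle x\rangle N}$, they are conjugate in $\langle x\rangle N$, and I would like the conjugating element to lie in $N$. Since $\langle x\rangle N / N$ is a $q$-group and hence a $p'$-group, any two Sylow $p$-subgroups of $\langle x\rangle N$ are conjugate by an element of $N$ (a standard Frattini/Hall-type observation: $\syl p{\langle x\rangle N}$ meets $N$-conjugacy in a single class because $\langle x\rangle N = N \cdot \langle x \rangle$ and $\langle x\rangle$ is a $p'$-group). Thus $P^n = P_1 \leq \cent {}x$ for some $n \in N$, i.e. $[x,P^n]=1$, as required.

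With the hypothesis of Lemma~\ref{lema} verified for every $x \in Q$, and noting that $Q$ acts coprimely on $N$ (since $Q$ is a $q$-group, $N$ is a $q'$-group, and $Q$ normalizes $N$ because $N \nor G$) with $P$ a $Q$-invariant Sylow $p$-subgroup of $N$ (given), Lemma~\ref{lema} immediately yields $[P,Q]=1$. I would write the argument in that order: (1) $Q$ acts coprimely on $N$; (2) for each $x\in Q$, use $p \nmid |x^G|$ to put a Sylow $p$-subgroup of $\langle x\rangle N$ into $\cent{}x$, then conjugate $P$ to it by an element of $N$; (3) invoke Lemma~\ref{lema}.

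The main obstacle is step (2): one must be careful that the conjugating element genuinely lies in $N$ rather than just in $\langle x\rangle N$. The resolution is the coprimality of $|\langle x\rangle N : N|$ (a power of $q$) with $p$, which guarantees that every Sylow $p$-subgroup of $\langle x\rangle N$ already lies in $N$, so all such subgroups are $N$-conjugate by Sylow's theorem applied inside $N$. Once this is pinned down the rest is a direct citation of Lemma~\ref{lema}, and no further computation is needed.
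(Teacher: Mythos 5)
Your proof is correct and takes essentially the same route as the paper: for each $x\in Q$ use $p\nmid|x^G|$ to produce an $N$-conjugate of $P$ centralized by $x$, then invoke Lemma~\ref{lema}; the paper just takes $P_1\in\syl pG$ with $[x,P_1]=1$ and conjugates $P$ onto $P_1\cap N\in\syl pN$ by an element of $N$, instead of working inside $\langle x\rangle N$. Incidentally, your hesitation about $P_0\cap N$ is unnecessary: for $N\nor G$ and $P_0\in\syl pG$ one always has $P_0\cap N\in\syl pN$ (since $|N:P_0\cap N|=|NP_0:P_0|$ is prime to $p$), so the detour through $\langle x\rangle N$, while perfectly valid, is not needed.
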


\begin{proof}
 We have that
$G=N\norm GP$ by the Frattini argument.
Now, let $x \in Q$. By hypothesis, there is $P_1 \in \syl pG$ such that
$[x,P_1]=1$. Now, $P^n \leq P_1$ for some $n \in N$, and thus $[x,P^n]=1$.
Now Lemma~\ref{lema} applies.
\end{proof}

Now, we are ready to prove Theorem A of the introduction. Recall that
if a finite group $G$ has a nilpotent Hall $\pi$-subgroup $H$, then every
$\pi$-subgroup of $G$ is contained in some $G$-conjugate of $H$ by a
well-known theorem of Wielandt.

\begin{thm}
 Suppose that $G$ is a finite group. Then $G$ has nilpotent $\{p,q\}$-Hall
 subgroups if and only if for every $p$-element $x \in G$, then $|G:\cent Gx|$
 is not divisible by $q$, and for every $q$-element $y \in G$, then
 $|G:\cent Gy|$ is not divisible by $p$.
\end{thm}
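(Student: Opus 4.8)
The plan is to prove this by induction on $|G|$, reducing to the simple groups via Theorem~\ref{simple}, and using the coprime action machinery of Lemma~\ref{lema} and Corollary~\ref{cor} to glue Sylow subgroups across a chief series. The "only if" direction is immediate: if $H = P \times Q$ is a nilpotent Hall $\{p,q\}$-subgroup with $P \in \syl pG$ and $Q \in \syl qG$, then for any $p$-element $x$, after conjugating we may assume $x \in P$ by Wielandt's theorem, so $Q \le \cent Gx$ and hence $q \nmid |G:\cent Gx|$; symmetrically for $q$-elements. So the work is in the converse.

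For the "if" direction, assume the class-size hypotheses. First I would dispose of the case where $G$ has a nontrivial normal subgroup. If $1 \ne N \nor G$, then by Lemma~\ref{obs} both $G/N$ and $N$ inherit the hypotheses, so by induction both have nilpotent Hall $\{p,q\}$-subgroups. The standard argument (as in the theory of Hall subgroups under normal subgroups) then produces a Hall $\{p,q\}$-subgroup of $G$: lift a Hall subgroup $K/N$ of $G/N$, inside $K$ the subgroup $N$ has a nilpotent Hall $\{p,q\}$-subgroup $H_0$, and $K = N \norm K{H_0}$ by the Frattini argument since all such $H_0$ are conjugate in $N$; then a Hall $\{p,q\}$-subgroup of $\norm K{H_0}$ is one of $G$. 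To see it is \emph{nilpotent}, I would use Corollary~\ref{cor}: taking $N$ to be (say) a minimal normal subgroup, if $q \nmid |N|$ then for $Q \in \syl qG$ and a $Q$-invariant $P \in \syl pN$ we get $[Q,P]=1$, and combined with the nilpotency already known in $G/N$ one builds the commuting pair in $G$; the case $p\nmid|N|$ is symmetric, and if $N$ is a $\{p,q\}'$-group one simply passes to $\norm G{H_0}$ for $H_0$ a Hall $\{p,q\}$-subgroup of $N$ (which is trivial) — more carefully one inducts inside a suitable normalizer. So the genuine difficulty is concentrated in the case $G$ is characteristically simple, in fact simple (since a direct product of simple groups with the hypothesis would have each factor satisfying it, and one reduces to one factor).

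So suppose $G$ is non-abelian simple. If both $p,q$ divide $|G|$, then Theorem~\ref{simple} applies directly: the hypothesis is exactly that all $p$-elements have $q'$-class size and all $q$-elements have $p'$-class size, so $G$ has an \emph{abelian} Hall $\{p,q\}$-subgroup, which is in particular nilpotent. If one of the primes, say $q$, does not divide $|G|$, then a Sylow $q$-subgroup is trivial and any Sylow $p$-subgroup trivially "commutes" with it — a Sylow $p$-subgroup of $G$ is itself the desired nilpotent Hall $\{p,q\}$-subgroup. This completes the simple case and hence the induction.

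The main obstacle I anticipate is not the simple case (Theorem~\ref{simple} does the heavy lifting) but the bookkeeping in the reduction step: one must be careful that a nilpotent Hall $\{p,q\}$-subgroup of $G/N$ together with one of $N$ really assembles into a \emph{nilpotent} Hall subgroup of $G$, rather than merely a Hall $\{p,q\}$-subgroup. The cleanest route is probably to fix $Q \in \syl qG$, show by repeated application of Corollary~\ref{cor} along a chief series (or by a direct application to a well-chosen normal subgroup) that $Q$ normalizes and then centralizes some $P \in \syl pG$, handling the chief factors of order prime to $p$, prime to $q$, and of $\{p,q\}'$-type (the last via Glauberman/Frattini as in Lemma~\ref{lema}) — and the chief factors that are $\{p,q\}$-groups cannot occur unless $G$ is $\{p,q\}$-solvable on that factor, where it is classical. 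Once $[P,Q]=1$ we are done since $PQ = P \times Q$ is then the required nilpotent Hall $\{p,q\}$-subgroup.
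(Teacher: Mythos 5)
Your easy direction (via Wielandt) and your treatment of a minimal normal subgroup $N$ that is a $p'$-, $q'$- or $\{p,q\}'$-group are essentially the paper's argument, although the phrase ``one builds the commuting pair in $G$'' hides a genuine step: Corollary~\ref{cor} only gives $[Q,P]=1$ for $P\in\syl pN$, which is not a Sylow $p$-subgroup of $G$, and the paper upgrades this by a Frattini/Schur--Zassenhaus construction of a Hall subgroup $U$ with $U/Q=(S/Q)\times(PQ/Q)$, then $[S,P]\le Q$, $[S,P,P]=1$ and coprime action to get commuting full Sylow subgroups. That vagueness is fixable; the real problem lies in your reduction to the simple case.

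The gap is this: after the coprime cases, what remains is the situation where \emph{every} minimal normal subgroup $N$ of $G$ is a direct product of non-abelian simple groups of order divisible by both $p$ and $q$, and $G$ may well be strictly larger than $N$ (for instance $G$ almost simple with $G/N$ of order $p$). Your claim that ``chief factors that are $\{p,q\}$-groups cannot occur unless $G$ is $\{p,q\}$-solvable on that factor'' is false, and your conclusion that the difficulty is concentrated in $G$ characteristically simple does not follow: in this residual case your Frattini/Schur--Zassenhaus assembly does produce a Hall $\{p,q\}$-subgroup of $G$, but you have no argument that it is nilpotent, and Corollary~\ref{cor} is unavailable because $N$ is neither a $p'$- nor a $q'$-group. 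This is precisely where the paper invests most of its effort: (i) it proves $N$ is actually simple, by considering the kernel $B$ of the action of $G$ on the simple factors $S_1,\dots,S_k$ and using the hypothesis --- a $q$-element $x$ with $[x,P]=1$ for some $P\in\syl pG$ must commute with elements of order $p$ of each $P\cap S_i$, hence normalize each $S_i$ --- to show $G/B$ is a $p'$- and $q'$-group, so $B=G$ by induction; (ii) it eliminates proper quotients of prime order: writing $G=K\langle x\rangle$ with $x$ a $p$-element, it takes $Q\in\syl qG$ with $[Q,x]=1$, uses conjugacy of the nilpotent Hall subgroups of $K$ to arrange $[Q^k,P\cap K]=1$, and then the factorization $G=K\,\cent G{Q^k}$ to see that $|G:\cent G{Q^k}|$ is prime to $p$, yielding commuting Sylow subgroups of $G$ itself; (iii) it invokes solvability of ${\rm Out}(N)$ to get $G=N\cent GN$, so that either $C:=\cent GN>1$ and the Hall subgroups of $N$ and $C$ multiply, or $G=N$ is simple and Theorem~\ref{simple} applies. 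Without steps of this kind your induction does not close, so the proposal as it stands is incomplete at its central point.
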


\begin{proof}
We assume the condition $^*$: for every $p$-element $x \in G$,
$|G:\cent Gx|$ is not divisible by $q$, and for every $q$-element $y \in G$,
$|G:\cent Gy|$ is not divisible by $q$.
We prove by induction on $|G|$ that $G$ has a nilpotent Hall
$\{p,q\}$-subgroup. Write $\pi=\{p,q\}$.

The condition $^*$ is inherited by quotients and normal subgroups,
by Lemma~\ref{obs}.

Let $1<N$ be a normal subgroup of $G$. By induction, we know that $G/N$ has
a nilpotent Hall $\pi$-subgroup $H/N$. Suppose that $|N|$ is not divisible by
$p$ or $q$. Then we use the Schur--Zassenhaus theorem in $H$ to get a
nilpotent Hall $\pi$-subgroup of $G$.

Suppose now that $|N|$ is not divisible by $p$.
Let $P \in \syl p G$ and let $Q \in \syl q N$ be $P$-invariant (which we know
to exist by coprime action).
By Corollary~\ref{cor}, we have that $[P,Q]=1$.
Now, recall that $G/N$ has a Hall nilpotent $\pi$-subgroup $H/N$. Thus,
using the
Frattini argument and the Schur-Zassenhaus theorem in the group $\norm HQ/Q$
with respect to the normal subgroup $\norm NQ/Q$, we have that
$\norm HQ/Q$ has a nilpotent Hall $\pi$-subgroup $U/Q$, which we may assume
contains $P$.
Now, notice that $U$ is a Hall $\pi$-subgroup of $G$. Write
$U/Q=(S/Q) \times (PQ/Q)$, where $S \in \syl q U$. (In particular,
$S \in \syl q G$.) Then $[P,S] \leq Q$ and $[S,P,P]=1$. Thus $[S,P]=1$ by
coprime action.

Hence we may assume that the order of every proper normal subgroup is
divisible by $p$ and $q$.

Let $N$ be a minimal normal subgroup of $G$. Hence
$N=S_1 \times \cdots \times S_k$, where $S_i$ is a non-abelian simple group
of order divisible by $pq$, and $G$ transitively permutes the set
$\Omega=\{ S_1, \ldots, S_k \}$. Let $B$ be the kernel of the action.
We claim that $G/B$ is a $q'$-group. Otherwise, let $Bx$ be an element of
order $q$, where $x$ has $q$-power order. Now, by hypothesis, we have that
$[x,P]=1$ for some Sylow $p$-subgroup $P$ of $G$. Then, $P\cap N \in \syl pN$,
and in fact $P\cap N=(P \cap S_1) \times \ldots \times (P \cap S_k)$.
Now, let $y \in P \cap S_i$ be of order $p$. Then
$[y,x]=1$, and therefore we deduce that $x$ has to normalize $S_i$.
Then $x \in \bigcap_i \norm G{S_i}=B$ and this is a contradiction.
This shows that $G/B$ is a $q'$-group and by symmetry a $p'$-group. But
then $B$ contains both Sylow $p$-subgroups and
$q$-subgroups of $G$ and therefore by induction, we may assume that $B=G$.
Thus $N$ is a simple group of order divisible by $pq$.

We show now that $G$ can be assumed to have no proper solvable quotients.
Suppose that $G/K$ has prime order, where $K \nor G$. By induction, we know
that $K$ has nilpotent Hall $\pi$-subgroups.
If $G/K$ is $\pi'$-group, then the nilpotent Hall $\pi$-subgroups
of $G$ are Hall subgroups of $G$ and we are done. Therefore, we assume (by symmetry) that $G/K$ has order $p$, so we may write
$G=K\langle x\rangle$ for some $p$-element $x\in G$.
Now, let $P \in \syl pG$ be such that $x \in P$. By hypothesis, let
$Q \in \syl qG$ such that $[Q,x]=1$. Notice that $Q\leq K$ and that
$P \cap K \in \syl pK$. Since $K$ has nilpotent Hall $\pi$-subgroups,
then $P\cap K$ is contained in some nilpotent Hall $\pi$-subgroup of $K$.
Hence there is $k \in K$ such that
$[Q^k, P\cap K]=1$. In particular, $|K:\cent K{Q^k}|$ is not divisible by $p$.
Since $x^k \in \cent G{Q^k}$, we have that $G=K\cent G{Q^k}$. Hence
$|G: \cent G{Q^k}|=|K:\cent K{Q^k}|$ is not divisible by $p$, and there is
some Sylow $p$-subgroup $P_1$ of $G$ such that $[Q^k, P_1]=1$. We deduce that
$G$ has nilpotent Hall $\pi$-subgroups, and in this case the theorem is proved.

Now, since $G/N\cent GN$ is isomorphic to a subgroup of ${\rm Out}(N)$, then
$G/N\cent GN$ is solvable and we conclude that $G=N \cent GN$. Since
$\cent GN$ has nilpotent Hall $\pi$-subgroups by induction, then we conclude
that $N$ cannot be proper in $G$, because otherwise $N$ and therefore $G$
would have nilpotent Hall $\pi$-subgroups. Now Theorem~\ref{simple} applies.
\end{proof}

Theorem B immediately follows from Theorem A, by using the following.

\begin{lem}
 Let $G$ be a finite group and let $\pi$ be a set of primes.
 Assume that $\pi$ contains at least two prime divisors of $|G|$.
 If G has nilpotent Hall $\tau$-subgroups for every $\tau \subseteq \pi$ with
 $|\tau|=2$, then $G$ has nilpotent Hall $\pi$-subgroups.
\end{lem}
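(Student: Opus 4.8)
The statement is a purely combinatorial reduction: if $G$ has nilpotent Hall $\tau$-subgroups for every two-element subset $\tau\subseteq\pi$, then $G$ has a nilpotent Hall $\pi$-subgroup. The idea is to build the Hall $\pi$-subgroup prime by prime, using the Sylow subgroups that pairwise commute. For each prime $p\in\pi$ dividing $|G|$ fix a Sylow $p$-subgroup $P_p$; the goal is to arrange these choices so that $P_p$ and $P_q$ commute for all pairs $p,q\in\pi$, since then $H:=\prod_{p\in\pi} P_p$ is a subgroup of order $\prod_{p\in\pi}|G|_p$, hence a nilpotent Hall $\pi$-subgroup of $G$ (nilpotent because it is a direct product of its Sylow subgroups). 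Note that by the convention in the lemma we only need to worry about primes actually dividing $|G|$, and if only one such prime lies in $\pi$ there is nothing to prove.

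First I would observe that the hypothesis, combined with Theorem~A (or rather with the characterization it provides), translates into: for every pair of distinct primes $p,q\in\pi$ and every $p$-element $x\in G$, the index $|G:\cent Gx|$ is not divisible by $q$. Equivalently, by Theorem~A, some Sylow $p$-subgroup of $G$ commutes with some Sylow $q$-subgroup; but what I really want is a simultaneous choice. Here the cleanest route is to invoke a known structural fact about groups in which Sylow subgroups for the primes in $\pi$ pairwise permute: one uses that for each pair, a Sylow $p$-subgroup lies in the centralizer of a Sylow $q$-subgroup, and then an induction argument (reducing modulo a minimal normal subgroup, exactly as in the proof of Theorem~A) pins down a coherent global choice. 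In fact the shortest argument probably cites the results of \cite{M} directly: the paper's own phrasing ("Using Theorem A and the results of \cite{M}, we can deduce Theorem~B") signals that this lemma is the missing combinatorial glue, and \cite{M} presumably contains precisely the statement that pairwise-permutability of Sylow subgroups across a set $\pi$ forces the existence of a nilpotent Hall $\pi$-subgroup.

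The step I expect to be the main obstacle is passing from "for each pair $\{p,q\}$, there exist commuting Sylow subgroups" to "there is one choice of Sylow subgroup for each prime making all pairs commute." This is where one cannot simply multiply local information; transitivity of the Wielandt-type conjugacy of nilpotent Hall subgroups is the tool. Concretely, I would proceed by induction on $|\pi\cap\{p : p\mid |G|\}|$: pick $p\in\pi$, let $\pi_0:=\pi\setminus\{p\}$. By induction $G$ has a nilpotent Hall $\pi_0$-subgroup $K$, and by Wielandt's theorem all $\pi_0$-subgroups of $G$ are conjugate into $K$. Separately, for each $q\in\pi_0$ the hypothesis on the pair $\{p,q\}$ gives a nilpotent Hall $\{p,q\}$-subgroup, whose Sylow $q$-part is $G$-conjugate into $K$; adjusting by that conjugation places a Sylow $p$-subgroup $P$ of $G$ in $\cent GQ$ for a Sylow $q$-subgroup $Q$ of $K$. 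One then argues, using coprime action / the fact that distinct $Q$'s generate $K$, that $P$ can be taken to centralize all of $K$ simultaneously, so $KP$ is the desired nilpotent Hall $\pi$-subgroup. The delicate point is the "simultaneously": one runs the same reduction-to-simple-groups machinery as in the proof of Theorem~A, handing off at the end to Theorem~\ref{simple}, which guarantees the relevant commuting Sylow subgroups in a simple group are even abelian and in particular behave well under the inductive amalgamation.
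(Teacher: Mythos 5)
Your instinct that the shortest argument simply cites \cite{M} is exactly what the paper does: its entire proof of this lemma is the citation of \cite[Lemma~3.4]{M}. In that sense your proposal identifies the paper's approach.

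However, the standalone inductive sketch you offer in place of that citation does not close the key gap, so you should not present it as a proof. The passage from ``for each $q\in\pi_0$ there is a Sylow $p$-subgroup of $G$ centralizing some Sylow $q$-subgroup of the nilpotent Hall $\pi_0$-subgroup $K$'' to ``one single Sylow $p$-subgroup centralizes all of $K$'' is precisely the content of \cite[Lemma~3.4]{M}, and it is not delivered by Wielandt conjugacy together with ``coprime action / the $Q$'s generate $K$'': the Sylow $p$-subgroup you produce depends on $q$, and the conjugations aligning the various nilpotent Hall $\{p,q\}$-subgroups with $K$ need not be compatible, so you cannot multiply or intersect these local choices. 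Nor does handing off to ``the same reduction-to-simple-groups machinery as in the proof of Theorem~A, together with Theorem~\ref{simple}'' rescue the step: that machinery is set up for a single pair of primes, and Theorem~\ref{simple} only asserts the existence of an abelian Hall $\{r,s\}$-subgroup of a simple group for one pair $\{r,s\}$ at a time, so after any such reduction with $|\pi|\ge 3$ you face the identical amalgamation problem at the level of the simple group (one would need, for instance, that the abelian Hall subgroups for the various pairs can all be chosen inside one common abelian subgroup, which the paper never proves). So either quote \cite[Lemma~3.4]{M} as the paper does, or actually supply its proof; the sketch as written is not a substitute.
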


\begin{proof}
This is \cite[Lemma~3.4]{M}. (See the comment that follows the proof.)
\end{proof}

Proposition 2.3 of \cite{KS} provides a different reduction to simple groups
of Theorem B. However, it is easier to check Theorem~\ref{simple} than
Theorem~B for simple groups.

Finally, Theorem C is now an obvious consequence of Theorem B and the Main
Theorem in \cite{NST}.

%%%%%%%%%%%%%%%%%%%%%%%%%%%%%%%%%%%%%%%%%%%%%%%%%%%%%%%%%%%%%%%%%%%%%%%%%
\section{Some Solvability}
Next, we show now that a version of Theorem A is possible under weaker
hypotheses if we allow some solvability conditions.

\begin{thm}   \label{t}
 Let $p,q$ be primes, and let $G$ be a finite group.
 Assume that all the $q$-elements have conjugacy class sizes not divisible
 by $p$. If $G$ is $p$-solvable or $q$-solvable, then a Sylow $p$-subgroup
 of $G$ normalizes some Sylow $q$-subgroup of $G$.
\end{thm}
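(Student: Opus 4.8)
The plan is to prove the statement by induction on $|G|$, handling the $p$-solvable and $q$-solvable cases by a reduction through a suitable minimal normal subgroup. The key tools will be Corollary~\ref{cor} (which controls commuting in a $q'$-normal subgroup) and the Frattini argument, together with Schur--Zassenhaus and Glauberman's Lemma for coprime actions. Throughout, the hypothesis that every $q$-element has $p'$-class size is inherited by normal subgroups and quotients by Lemma~\ref{obs}, so it passes freely to the inductive steps. Fix a Sylow $p$-subgroup $P$ of $G$; I want to produce a $P$-invariant Sylow $q$-subgroup.

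\textbf{The $q$-solvable case.} Here I take a minimal normal subgroup $N$ of $G$; by $q$-solvability either $N$ is a $q$-group or $N$ is a $q'$-group. If $N$ is a $q'$-group, choose a $P$-invariant Sylow $q$-subgroup $Q_0$ of $N$ (possible by coprime action since $|N|$ is coprime to $q$); Corollary~\ref{cor} gives $[P,Q_0]=1$. By induction applied to $G/N$, some Sylow $p$-subgroup of $G/N$ normalizes a Sylow $q$-subgroup $Q/N$ of $G/N$; after conjugating I may assume the image of $P$ does this, and then a Frattini/Schur--Zassenhaus argument inside $\norm G{Q_0}$ (working modulo $Q_0$, using $[P,Q_0]=1$) upgrades this to a genuine $P$-invariant Sylow $q$-subgroup of $G$. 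If instead $N$ is a $q$-group, then $N \leq \oh qG$; pass to $G/N$, get by induction a $p$-element image normalizing a Sylow $q$-subgroup, pull back, and use that $N$ is normal to conclude — here the normalizer of the pulled-back Sylow $q$-subgroup still contains a full Sylow $p$-subgroup since $N$ is a $q$-group contained in every Sylow $q$-subgroup.

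\textbf{The $p$-solvable case.} This is the genuinely harder half, because now a minimal normal subgroup $N$ may be a $p$-group or a $p'$-group. If $N$ is a $p'$-group, the argument is essentially the one just given (choose $P$-invariant Sylow $q$-subgroup of $N$, apply Corollary~\ref{cor} to get $[P,Q_0]=1$, lift the conclusion from $G/N$ via Frattini and Schur--Zassenhaus). The delicate case is $N$ a $p$-group, i.e. $N\le\oh pG$. Then $N$ lies in $P$, and I pass to $\bar G=G/N$: by induction $\bar P:=PN/N$ normalizes some $\bar Q\in\syl q{\bar G}$; let $Q\in\syl qG$ with $QN/N=\bar Q$. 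Then $PN$ normalizes $QN$. The problem is that $P$ itself need not normalize $Q$: we only know $P$ normalizes $QN = Q \times N$ set-theoretically (as $\gcd(|Q|,|N|)=1$, $QN=Q\ltimes N$ but $Q$ is not normal in it in general). However, since $\gcd(|N|,|Q|)=1$, inside the $P$-invariant group $QN$ the Sylow $q$-subgroups are $QN$-conjugate, and a Glauberman-type argument for the coprime action of $P$ on $QN$ — or rather, since $P$ normalizes $QN$ and acts on it, and $N$ is normal in $PN$ hence $P$ acts on $QN/N\cong Q$ — should yield a $P$-invariant complement to $N$ in $QN$, which is then a $P$-invariant Sylow $q$-subgroup of $G$. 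This is exactly the kind of step that appears in the proof of Theorem~A above (the ``$[S,P,P]=1$ so $[S,P]=1$ by coprime action'' manoeuvre), and I expect the main obstacle to be setting up this last coprime-action argument cleanly: one must be careful that the relevant quotient is a $q$-group on which a $p$-group acts coprimely, so that $P$-invariant complements exist and are Sylow in $G$.

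\textbf{Where the difficulty concentrates.} The two reductions above each need a case where $|N|$ is divisible by exactly one of $p,q$; the case $N$ a $p'$-group in the $p$-solvable setting and $N$ a $q'$-group in the $q$-solvable setting are symmetric and routine via Corollary~\ref{cor}. The case $|N|$ coprime to both $p$ and $q$ is immediate from Schur--Zassenhaus applied in the full preimage of the Hall $\{p,q\}$-data from $G/N$. So the real content is the ``$N$ a $p$-group'' branch of the $p$-solvable case, where one cannot invoke Corollary~\ref{cor} (it needs a $q'$ normal subgroup) and must instead argue directly that a $P$-invariant Sylow $q$-subgroup of the preimage of $\bar Q$ exists. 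I expect this to be handled by the coprime-action lemma machinery (Lemma~\ref{lema} and Glauberman's Lemma) rather than by any new idea, but it is the step that requires care to write correctly.
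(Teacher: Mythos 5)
There is a genuine gap, and it sits exactly where you say the difficulty concentrates. In the $p$-solvable case with $N\le\oh pG$, your plan is: $P$ normalizes $QN$ (by induction in $G/N$), and then a ``Glauberman-type coprime-action argument'' should produce a $P$-invariant complement to $N$ in $QN$. But this action is not coprime: $P$ and $N$ are both $p$-groups, the complements to $N$ in $QN$ form a single $N$-orbit of $p$-power length, and a $p$-group acting on a set of $p$-power size need not have a fixed point. Indeed the purely formal statement you are relying on is false: in $G=\SL_2(3)$ with $p=2$, $q=3$, $N=\oh 2G$, $P=N$, the Sylow $2$-subgroup normalizes $QN=G$ but normalizes no Sylow $3$-subgroup. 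So at this point you must re-use the class-size hypothesis, not coprimality: every $q$-element $x$ satisfies $[x,P_1]=1$ for some $P_1\in\syl pG$, and $N\le\oh pG\le P_1$, so $[N,Q]=1$ for every $Q\in\syl qG$; hence $QN=Q\times N$, $Q$ is characteristic in $QN$, and $P$ normalizes $Q$. Without this extra input the step fails.

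Your invocations of Corollary~\ref{cor} are also role-reversed, and the hypothesis is asymmetric, so this matters. The corollary requires $N\nor G$ to be a $q'$-group and a $Q$-invariant $P_1\in\syl pN$, and concludes $[Q,P_1]=1$; it does not say that a $P$-invariant Sylow $q$-subgroup of a normal subgroup is centralized by $P$ (that would need the hypothesis on $p$-elements, which is not assumed). In your $q$-solvable, $N$ a $q'$-group branch, the ``$P$-invariant Sylow $q$-subgroup $Q_0$ of $N$'' is trivial, so ``$[P,Q_0]=1$'' is vacuous and the Frattini/Schur--Zassenhaus ``upgrade'' inside $\norm G{Q_0}=G$ has no content; what is actually needed is $[Q,P_1]=1$ for a $Q$-invariant $P_1\in\syl pN$, which gives that $|N:\norm NQ|$ is prime to $p$, and then the Frattini argument in $NQP$ produces a Sylow $p$-subgroup normalizing $Q$ --- this is exactly how the paper argues, taking $N=\oh{q'}G$. (Your $p$-solvable $p'$-branch misuses the corollary in the same reversed way, but there it can be repaired without it: the number of Sylow $q$-subgroups of $QN$ divides $|N|$, which is prime to $p$, so the $p$-group $P$ fixes one.) Finally, note that the paper's treatment of the $p$-solvable case is altogether different and slicker: it shows via Hall--Higman 1.2.3 that $G/\oh{p'}G$ is a $q'$-group, so a Sylow $q$-subgroup lies in $\oh{p'}G$ and coprime action finishes, with no induction over minimal normal subgroups at all.
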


\begin{proof}
We argue by induction on $|G|$. Assume first that $G$ is $p$-solvable.
Let $K=\oh{p'}G$ and let $L/K=\oh{p}{G/K}$.
Let $Kx$ be a $q$-element of $G/K$, where $x$ is a $q$-element of $G$.
Now, $[x,P]=1$ for some Sylow $p$-subgroup $P$ of $G$, by hypothesis.
Thus $[Kx,PK/K]=1$ and therefore $Kx$ centralizes $L/K$. By Hall-Higman
1.2.3. Lemma, it follows that $Kx \in L/K$, and thus $Kx=K$. We conclude
that $G/K$ is a $q'$-group. In particular, $K$ contains a Sylow $q$-subgroup
of $G$. Now, $P$ acts coprimely on $K$, and by coprime action, it follows that
$P$ normalizes some Sylow $q$-subgroup of $K$, which is a Sylow $q$-subgroup
of $G$.

Assume now that $G$ is $q$-solvable. If $1<N \nor G$, then by induction
we know that there exists $P \in \syl pG$ and $Q \in \syl qG$ such that
$P$ normalizes $NQ$. If $\oh qG>1$, then we set $N=\oh qG$, $P$ normalizes
$QN=Q$, and we are done. So we may assume that $\oh{q'}G=N>1$. Now $Q$ acts
coprimely on $N$. By coprime action $Q$ normalizes some $P_1 \in \syl pN$.
By Corollary~\ref{cor}, we have that $[Q,P_1]=1$, in particular
$|N:\norm N{Q}|$ is not divisible by $p$. Now $NQ \nor NQP$ and by the
Frattini argument, we have that $NQP=N\norm {NQP}Q$. Then
$|NQP:\norm {NQP}Q|=|N:\norm NQ|$ is not divisible by $p$. Hence some
Sylow $p$-subgroup of $NQP$ (and hence of $G$) normalizes $Q$.
\end{proof}

It is an interesting problem to study if the property that a Sylow $p$-subgroup
normalizes some Sylow $q$-subgroup is detectable by the character table.
(More generally, what does the character table of $G$ know about $\nu_q(G)$,
the number of Sylow $q$-subgroups of $G$?) We can easily solve this question
in $p$-solvable groups.

\begin{thm}
 Suppose that $G$ is $p$-solvable. Let $q\ne p$ be another prime.
 Then some Sylow $p$-subgroup of $G$ normalizes some Sylow
 $q$-subgroup of $G$ if and only if $G/\oh{p'} G$ is a $q'$-group.
\end{thm}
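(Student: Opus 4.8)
The plan is to write $K:=\oh{p'}G$ and prove the two implications separately; no induction will be needed.

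For the ``if'' direction I would argue as follows. Suppose $G/K$ is a $q'$-group. Since $K\nor G$ and $q\nmid|G/K|$, the $q$-part of $|K|$ equals that of $|G|$, so every $Q\in\syl qK$ already lies in $\syl qG$. Now fix $P\in\syl pG$. As $K=\oh{p'}G$ is a $p'$-group normalised by $P$, the action of $P$ on $K$ is coprime, and by coprime action $P$ normalises some $Q\in\syl qK=\syl qG$; this is the required Sylow $q$-subgroup. (Note that $p$-solvability plays no role in this direction.)

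For the ``only if'' direction, suppose $P\in\syl pG$ normalises $Q\in\syl qG$; the goal is to show $Q\le K$, i.e.\ $q\nmid|G/K|$. I would pass to $\overline G:=G/K$, with images $\overline P\in\syl p{\overline G}$ and $\overline Q\in\syl q{\overline G}$. Since $P\le\norm GQ$ we have $Q\nor PQ$, hence $\overline Q\nor\overline P\,\overline Q=\overline{PQ}$, so $\overline P$ normalises $\overline Q$. The key structural input is that $\overline G$ is $p$-solvable with $\oh{p'}{\overline G}=\oh{p'}{G/\oh{p'}G}=1$, so by the Hall--Higman $1.2.3$ Lemma (as in the proof of Theorem~\ref{t}) $\cent{\overline G}{M}\le M$ for $M:=\oh p{\overline G}$. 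Being a normal $p$-subgroup of $\overline G$, $M$ lies inside the Sylow $p$-subgroup $\overline P$, so $M$ normalises $\overline Q$; together with $M\nor\overline G$ this forces $[M,\overline Q]\le M\cap\overline Q=1$. Hence $\overline Q\le\cent{\overline G}{M}\le M$, and since $\overline Q$ is a $q$-group sitting inside the $p$-group $M$ we get $\overline Q=1$, i.e.\ $q\nmid|G/K|$, as desired.

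The argument is short, and the only real ingredient is the Hall--Higman $1.2.3$ Lemma, which is exactly where $p$-solvability enters. I expect no serious obstacle; the one point requiring a little care is the reduction to $\overline G$ in the ``only if'' part, namely that ``$P$ normalises $Q$'' pushes down to ``$\overline P$ normalises $\overline Q$'' --- this works because $Q$ is in fact normal in the subgroup $PQ$ (as $P\le\norm GQ$), so its image $\overline Q$ is normal in $\overline{PQ}=\overline P\,\overline Q$.
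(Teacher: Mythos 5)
Your proof is correct and follows essentially the same route as the paper: coprime action of $P$ on $\oh{p'}G$ for the ``if'' direction, and for the ``only if'' direction a reduction to the case $\oh{p'}G=1$ where the Hall--Higman $1.2.3$ Lemma forces the image of $Q$ into $\oh pG$ and hence to be trivial. The only (cosmetic) difference is that you pass directly to the quotient $G/\oh{p'}G$ instead of phrasing the reduction as an induction on $|G|$, which if anything is a slight streamlining of the paper's argument.
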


\begin{proof}
Suppose that $G/N$ is a $q'$-group, where $N=\oh{p'}G$.
Let $P \in \syl p G$. Then $P$ acts coprimely on $N$, and by coprime
action it stabilizes some Sylow $q$-subgroup of $N$, which is a Sylow
$q$-subgroup of $G$.

Conversely, suppose that $P \in \syl pG$ normalizes $Q \in \syl pG$.
We show by induction on $|G|$ that $G/\oh{p'}G$ is a $q'$-group.
If $N \nor G$, then we have that $PN/N$ normalizes $QN/N$, so by applying
induction in $G/\oh{p'}G$, we may assume that $\oh{p'}G=1$.
Now, let $K=\oh p G$. By hypothesis, we have that $K$ normalizes $Q$.
Also $Q$ normalizes $K$. Since $Q \cap K=1$, then we conclude that $[Q,K]=1$.
Then $Q \leq \cent GK \leq K$ by Hall-Higman's Lemma 1.2.3, and $Q=1$. This
concludes the proof.
\end{proof}

Finally, we prove that if the prime 2 is involved in the following form, then
we can obtain a certain solvability.

\begin{thm}
 Let $q$ be an odd prime, and let $G$ be a finite group.
 If all the $q$-elements of $G$ have conjugacy class size not divisible by $2$,
 then $G$ is $q$-solvable. In particular, a Sylow $2$-subgroup of $G$
 normalizes some Sylow $q$-subgroup of $G$.
\end{thm}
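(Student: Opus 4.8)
The plan is to reduce the statement to a fact about simple groups, namely Theorem~\ref{simple2}, using a minimal counterexample argument. Suppose $G$ is a counterexample of minimal order to the claim that $G$ is $q$-solvable. First I would observe that the hypothesis---all $q$-elements have even-indexed conjugacy classes---passes to normal subgroups and quotients by Lemma~\ref{obs} (applied with $p=2$). Hence every proper normal subgroup and every proper quotient of $G$ is $q$-solvable. Standard arguments then show that $G$ has a unique minimal normal subgroup $N$, that $N$ is not $q$-solvable (else $G/N$ and $N$ are both $q$-solvable, making $G$ $q$-solvable), and that $G/N$ is $q$-solvable. Since $N$ is a minimal normal subgroup that is not $q$-solvable, $N = S_1 \times \cdots \times S_k$ with each $S_i$ a non-abelian simple group, and $q \mid |N|$, so $q \mid |S_i|$ for each $i$.

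The key step is then to derive a contradiction from the existence of such an $N$. By Theorem~\ref{simple2}, since $q$ is an odd prime dividing $|S_1|$, the simple group $S_1$ contains a conjugacy class of $q$-elements of even size; pick a $q$-element $x \in S_1$ with $|S_1 : \bC_{S_1}(x)|$ even. Viewing $x$ as an element of $N$ (hence of $G$) via the first factor, I would compute $\bC_N(x) = \bC_{S_1}(x) \times S_2 \times \cdots \times S_k$, so that $|N : \bC_N(x)| = |S_1 : \bC_{S_1}(x)|$ is even. Since $|N : \bC_N(x)|$ divides $|G : \bC_G(x)|$, the class size $|x^G|$ is even, contradicting the hypothesis that all $q$-elements of $G$ have odd class size. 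This forces $N$ to be $q$-solvable after all, and hence $G$ is $q$-solvable.

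For the final sentence of the statement: once $G$ is known to be $q$-solvable, the hypothesis (all $q$-elements have class size not divisible by $2$) is exactly the hypothesis of Theorem~\ref{t} with the roles $p = 2$, so Theorem~\ref{t} immediately yields that a Sylow $2$-subgroup of $G$ normalizes some Sylow $q$-subgroup of $G$.

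The main obstacle is the bookkeeping in the reduction to the unique-minimal-normal-subgroup case: one must be careful that the conjugacy-class-size hypothesis is genuinely inherited in both directions (Lemma~\ref{obs} handles quotients and normal subgroups for $p$-elements, and here the relevant prime is $2$, while $q$ plays the role of the divisibility prime, so one applies it in the form ``$q$-elements have class size not divisible by $2$''). The only real content beyond Theorem~\ref{simple2} and Theorem~\ref{t} is that one cannot have a $q$-element $x$ with even class size in $G$, and that in a direct product of non-abelian simple groups of order divisible by $q$, a single such bad class in one factor already produces a bad class in $G$; everything else is routine group-theoretic reduction.
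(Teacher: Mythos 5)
Your proof is correct and takes essentially the same route as the paper: the hypothesis is inherited by normal subgroups and quotients (Lemma~\ref{obs}, with the roles of the two primes being ``$q$-elements'' and divisibility by $2$), a minimal counterexample is reduced to the (characteristically) simple case, Theorem~\ref{simple2} then supplies a $q$-element of even class size giving the contradiction, and the last assertion is exactly Theorem~\ref{t} with $p=2$. The paper's reduction is marginally shorter—any proper nontrivial normal subgroup already makes $G$ $q$-solvable, so one may assume $G$ itself is non-abelian simple—but your detour through a non-$q$-solvable minimal normal subgroup $N=S_1\times\cdots\times S_k$ and the transfer of the even class size from $S_1$ up to $G$ is sound.
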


\begin{proof}
We argue by induction. If $1<N$ is a proper normal subgroup of $G$, then
$G/N$ and $N$ are $q$-solvable. So we may assume that $G$ is a non-abelian
simple group of order divisible by $2q$ and appeal to Theorem~\ref{simple2}.
The last part follows from Theorem~\ref{t}.
\end{proof}

We cannot reverse the primes in the previous theorem, not even to obtain
normalizing conditions between Sylow subgroups. All the 2-elements of
$G=J_1$ have conjugacy class not divisible by 5, and no Sylow 5-subgroup
of $G$ normalizes any Sylow 2-subgroup of $G$.

%%%%%%%%%%%%%%%%%%%%%%%%%%%%%%%%%%%%%%%%%%%%%%%%%%%%%%%%%%%%%%%%%%%%%%%%%

\end{document}